\def\d{{\delta}}
\def\e{{\epsilon}}
\def\s{{\sigma}}
\def\a{{\alpha}}
\def\t{{\tau}}
\def\o{{\omega}}
\def\ga{{\gamma}}
\def\p{{\pi}}
\def\UUU{{\mathfrak U}}
\newcommandx{\br}[4][1=\l, 2=\h]{[#3\,_{#1}\,#4]_{#2}}
\newcommandx{\pbr}[4][1=\l, 2=\h]{\{#3\,_{#1}\,#4\}_{#2}}
\newcommandx{\brl}[3][1=\l]{[#2\,_{#1}\,#3]}
\newcommandx{\pbrl}[3][1=\l]{\{#2\,_{#1}\,#3\}}
\title{\textbf{\uppercase{\large{$\h$-Vertex algebras and chiralization of star products}}}}
\author{Simone Castellan
}
\date{}
\begin{document}

\maketitle

\begin{abstract}
    \noindent We develop the theory of $\h$-vertex algebras, algebraic structures closely related to vertex algebras but with a deformed translation covariance axiom. We establish their structure theory, including analogues of Goddard’s Uniqueness Theorem, the Reconstruction Theorem, Borcherds Identity, and the OPE Expansion Formula, and introduce the associated notions of $\h$-Lie conformal and $\h$-Poisson vertex algebras. The formalism provides a natural and simplified construction of the Zhu algebra. The main application is to the chiralization of classical star-products: we show that every star-product on the symmetric algebra of a Lie algebra (or its central extensions) admits a chiralization, and we derive explicit formulae for these chiral star-products, including the Moyal–Weyl and Gutt star-products. Setting $\h=0$ recovers explicit deformation quantizations of a broad class of Poisson vertex algebras, including the classical limits of free-boson, $\beta\gamma$-system, affine, and Virasoro vertex algebras.
\end{abstract}

\section{Introduction}

Following the seminal paper by Zhu \cite{zhuModularInvarianceCharacters1996}, it is well-known that, to every vertex operator algebra (VOA) $V$, one can associate an associative algebra $\Zhu(V)$. This Zhu algebra has deep connections to the representation theory of $V$. Technically speaking though, the Zhu algebra is not \emph{directly} associated to the VOA. The construction requires an intermediate step, which is a deformation of the vertex operators $Y(a,z)\mapsto Y_\h(a,z)$. Following \cite{desoleFiniteVsAffine2006}, we call $Y_\h(a,z)$ the $\h$-deformed vertex operators. One defines two products on $V$, $\ast$ and $\circ$, as the $-1$ and $-2$ modes of $Y_\h(a,z)b$. Then, as a vector space, $\Zhu(V)=V/(V\circ V)$, with product induced by $\ast$. There are two different constructions for the operators $Y_\h(a,z)$. The first one is
\begin{equation}
    Y_\h(a,z):=(1+\h z)^{\Delta_a}Y(a,z),
\end{equation}
where $\Delta_a$ is the conformal degree  of $a$. It is used in Zhu's original paper. The second one is due to Huang \cite{huangDifferentialEquationsDuality2005}; it uses the change of variable $z\mapsto \frac{1}{\h}\log(1+\h z)$:
\begin{equation}
    Y_\h(a,z):=Y\left(a,\frac{1}{\h}\log(1+\h z) \right).
\end{equation}
Notice how the first construction needed a conformal structure (or at least a conformal grading \cite{desoleFiniteVsAffine2006}), while the second one works for every vertex algebra. The obtained $\h$-deformed vertex operators are not the same, but the corresponding Zhu algebras are isomorphic. 

Our goal is to switch the focus to these $\h$-deformed vertex operators and study the algebraic structure they form. This leads us to what we call an $\h$-vertex algebra. We define it as a vector space $V$ with a state-field correspondence $Y_\h(a,z)=\sum_{n\in\ZZ}a_{(n,\h)}z^{-n-1}$. It satisfies the usual vacuum and locality axioms of a vertex algebra, while the translation covariance axiom is deformed to
\begin{equation}
    [\del,Y_\h(a,z)]=(1+\h z)\del_z Y_\h(a,z).
\end{equation}
From this point of view, an $\h$-vertex algebra is a deformation of a vertex algebra with respect to the parameter $\h$. As we show in Proposition \ref{prop:changeofvariable}, $\h$-vertex algebras are in fact in bijection with vertex algebras, via the change of variables $z\mapsto \frac{1}{\h}\log(1+\h z)$. However, the axiomatic viewpoint allows us to obtain relations for the deformed $(n,\h)$-products that do not follow easily from the change of variables definition. Our main results are the following:
\begin{itemize}
    \item A complete structure theory of $\h$-vertex algebras. We show that all the usual results about the structure theory of vertex algebras admit a generalization to $\h$-vertex algebras. Among others, we prove results analogous to Goddard's Uniqueness Theorem, the Reconstruction Theorem, Borcherds Identity, and the OPE Expansion Formula (Section \ref{sec:structurehvertex}).
    \item The construction of an analogue of the $\l$-bracket, which we call the $\h$-bracket (Definition \ref{def h bracket}). We show that the $\h$-bracket controls both the commutator and the associator of the $(-1,\h)$-product (Theorem \ref{teo relazion h bracket h -1 prod}). Our theory of $\h$-vertex algebras, and the $\h$-bracket in particular, makes computations with the deformed $(n,\h)$-products much easier. This can be useful for computations of higher-level Zhu algebras, which are usually quite complicated to construct explicitly \cite{dongVertexOperatorAlgebras1998}.    
    Our main application is to the explicit computation of chiral star-products in Section \ref{section 4}.
    \item A more natural construction of the Zhu algebra, using the $\h$-vertex algebra formalism. The novelty of our approach is to relate the Zhu algebra directly to the $\h$-vertex algebra formed by the $\h$-deformed vertex operators. Using the $\h$-bracket formalism, we show that every $\h$-vertex algebra has a very natural associative algebra quotient. This algebra is easily seen to be isomorphic to the Zhu algebra of the original vertex algebra. Our proof avoids the usual complicated computations (Section \ref{sec:zhuhvertex}). 
\end{itemize}
Interestingly, the general philosophy to go from a formula for the $\l$-bracket to a formula for the $\h$-bracket is to move from infinitesimal calculus to calculus of finite differences (see Section \ref{sec:hbracket} and Appendix \ref{sec:finitedifferences} for more details). For example, the usual commutator formula for $(-1)$-product in a vertex algebra is
\begin{equation*}
    a_{(-1)}b-b_{(-1)}a=\int^0_{-\del}[a\,_\l\,b]\, d\l .
\end{equation*}
To get the commutator formula for the $(-1,\h)$-product, we have to replace the integral with its discrete analogue, called the definite sum (the inverse of the finite difference): 
\begin{equation*}
    a_{(-1,\h)} b-b_{(-1,\h)} a=\sum\nolimits^0_{-\del-\h}[a\,_\l\,b]_\h\, \d\l .
\end{equation*}
In \cite{liVertexFalgebrasTheir2011}, Li introduced the notion of an $F$-vertex algebra, associated to any formal group law $F$. Under his definition, ordinary vertex algebras and $\h$-vertex algebras are the $F$-vertex algebras associated to the additive and multiplicative formal group laws, respectively. See Section \ref{sec:F} for more details.

For a more uniform exposition, we prove all our results about $\h$-vertex algebras starting from the axioms, even when a different proof would be possible using the change of variables and the properties of vertex algebras. In \cite{desoleFiniteVsAffine2006}, the authors derived some identities of the $\h$-deformed vertex operators, in the conformal grading definition. For those, we provide new proofs, which only use the axioms of $\h$-vertex algebras and thus can be applied with greater generality. Since vertex algebras are $\h$-vertex algebras at the limit $\h\rightarrow0$, our results recover the structure theory of classical vertex algebras.

\subsection{Chiralization of star-products}\label{sec:introchiralstarproducts}

Our main application — and original motivation — for the $\h$-vertex algebra formalism is the computation of what we call \emph{chiral star-products}. 

The notion of star-products originates from physics, in the context of the so-called phase-space formulation of quantum mechanics. From a mathematical point of view, a star-product $\star$ is a deformation of the commutative product of a Poisson algebra $\mc A$, such that the star-commutator $a\star b-b\star a$ is a deformation of the Poisson bracket of $\Aa$. If we are given an associative algebra $A$, which is a quantization of $\Aa$, and a quantization map $\phi\colon \Aa\to A$, we can define a star-product $\star_\phi$ by pulling-back the non-commutative product of $A$ via $\phi$: $a\star_\phi b=\phi^{-1}(\phi(a)\phi(b))$. As an algebra, $(\mc A,\star_\phi)\cong A$. This isomorphism is tautological; however, in some interesting cases, we are able to express $\star_\phi$ with a closed formula that uses only operations of $\Aa$. Famous examples include the Moyal-Weyl and Gutt star-products.

Let $(V,\omega)$ be a symplectic vector space and consider the symmetric algebra $\Ss(V)$, with the Poisson bracket induced by $\omega$. The Moyal-Weyl star-product \cite{groenewoldPrinciplesElementaryQuantum1946} is 
\begin{equation}\label{eq:intromoyal}
    a\star b:=m\circ e^{\frac\omega2}(a\otimes b),  \quad \text{where } \ m(a\otimes b):=ab.
\end{equation}
Here $\omega$ is identified with the Poisson bivector and acts as a bidifferential operator. It is well-known that $(\mc S(V),\star)$ is isomorphic to the Weyl algebra of $(V,\omega)$. Similarly, let $\g$ be a Lie algebra and consider $\mc S(\g)$, with the Poisson bracket induced by the Lie bracket. Then $(\mc S(\g),\star)$ is isomorphic to the enveloping algebra $\UUU(\g)$, where $\star$ is the Gutt-star-product \cite{guttExplicitSpAst1983}. Its formula is similar to \eqref{eq:intromoyal}, with a deformation of the Poisson bivector instead of $\omega$.

From an algebraic perspective, star-products exist because Poisson algebras are the classical limit of associative algebras. In a sense, constructing a star-product is inverting the classical limit. From this point of view, vertex algebras are very similar to associative algebras. They too have a classical limit, called Poisson vertex algebras. These are commutative, differential algebras with an additional compatible $\l$-bracket $\pbrl{\cdot}{\cdot}$, called the Poisson $\l$-bracket. Moreover, there is another Zhu functor that associates a Poisson algebra to a Poisson vertex algebra. In many interesting cases, including freely generated vertex algebras, the following diagram commutes:

\begin{equation}\label{introdiagramma}
    \begin{tikzcd}
        &\text{Poisson vertex algebras}\arrow[d,"\Zhu"]  &\text{Vertex algebras}\arrow[l, "{\text{cl. limit}}"']\arrow[d,"\Zhu"']\\
        &\text{Poisson algebras}   &\text{Associative algebras}\arrow[l,"{\text{cl. limit}}"]
    \end{tikzcd}
\end{equation}
To chiralize means to ``invert'' the Zhu functors. For example, the chiralization of the Weyl algebra is the $\beta\gamma$-system, the chiralization of $\UUU(\g)$ is the affine vertex algebra $V^k(\g)$. 

It is natural to ask what the chiral analogue of a star-product is. The immediate answer would be a deformation of a Poisson vertex algebra structure into a vertex algebra. These were called ``star-deformations'' in \cite{liVertexAlgebrasVertex2004}. As for star-products, a star-deformation can be implicitly constructed from a known quantization of the Poisson vertex algebra via a quantization map. The interesting question is to provide a closed formula for the star-deformation, using only operations of the Poisson vertex algebra. Such a formula is particularly useful because vertex algebras have very complicated relations and are difficult to control, while Poisson vertex
algebras are much more tractable. From the physics point of view, a star-deformation corresponds to constructing a (very special) quantum field theory inside the formalism of classical field theory.

There is an important difference between star-products and star-deformations. While giving a star-product automatically gives a deformation of the Poisson bracket via the star-commutator, the same cannot be done for Poisson vertex algebras. In fact, the $\l$-bracket is a structure independent from the normally ordered product. So, one needs to specify a deformation of the commutative product of the Poisson algebras into a normally ordered product and a deformation of the Poisson $\l$-bracket into a compatible $\l$-bracket. In this paper, we use the integral $\l$-bracket formalism introduced by De Sole and Kac \cite{desoleFiniteVsAffine2006}. It is a single structure that contains the data of both the normally ordered product and the $\l$-bracket. This way, it is sufficient to compute a single deformation formula  $I_{\l,\star}$. More details in Section \ref{sec:integrallbracket}.

Suppose to have Poisson vertex, vertex, Poisson and associative algebras $\Vv,V,\Aa,A$ that fit in diagram \eqref{introdiagramma}. Let $\star$ be a star-product for $\Aa$, such that $(\Aa,\star)\cong A$. We expect a chiralization of $\star$ to be a deformation of $\Vv$ into $V$, but we would also expect to recover $\star$ after applying the Zhu functors. Unfortunately, a star-deformation will not satisfy this second condition. This is because the product in the Poisson Zhu algebra $\Aa$ is induced by the product on $\Vv$, but the product on the Zhu algebra $A$ is not induced by the normally ordered product on $V$, as discussed. It is instead induced by the $(-1,\h)$ product of the associated $\h$-vertex algebra. This motivates us to define a \emph{chiral star-product} $I_{\l,\h,\star}$ as a deformation of the Poisson vertex algebra $\Vv$ into the $\h$-vertex algebra associated to $V$ (Definition \ref{def:chiralstarproduct}). This is now compatible with the Zhu functor; if, after applying the Zhu functor, we obtain back $\star$, we say that $I_{\l,\h,\star}$ is a chiralization of $\star$.

We prove that any star-product on the symmetric algebra of a Lie algebra (or a central extension of a Lie algebra) can be chiralized (Theorem \ref{thm:liftingquantizationmap}). Both the Moyal-Weyl and Gutt star-products are included in this class. Moreover, we provide explicit formulae for all these chiral star-products. The most general case is in Theorem \ref{teo sum h star bracket general case}. The formula simplifies in the case of free-field vertex algebras (Corollary \ref{cor star sum h bracket caso free field}). Since a vertex algebra is an $\h$-vertex algebra at the limit $\h\to 0$, as an easy corollary we also obtain explicit formulae for star-deformations for a big class of Poisson vertex algebras (Theorem \ref{teo star integral l bracket}). This includes the classical limit of many well-studied vertex algebras, like affine, Virasoro, free boson, $\beta\gamma$-system vertex algebras, and more. In the case of free-field vertex algebras freely generated by a set $\{u_i\}_{i\in\Ii}$, we have:
\begin{equation}
    I_{\l,\star}(a,b)=m\circ\exp(\sum_{i\in\Ii}L_i^\l\otimes \del_{u_i})(a\otimes b),
\end{equation}
where $m$ is the multiplication map and
\begin{equation}
        L_i^\l(a)=\sum_{j\in\Ii}\int^{\l}_{-\del}\pdv{a}{u_j}\pbr[x+\del^{(1)}][]{u_j \,}{u_i}\, d x.
\end{equation}
We note similarities with classical star-products like Moyal-Weyl and Gutt. We still have the exponentiation of an operator which, in the end, depends only on the structure constants of the Poisson $\l$-bracket. This is interesting in relation to the known theory of ordinary star-products. Kontsevich proved that there is a bijection between equivalence classes of star-products and equivalence classes of formal Poisson structures \cite{kontsevichDeformationQuantizationPoisson2003}. That is, the formula for a star-product ultimately depends on the structure constants of the Poisson bracket. Our result gives some (very preliminary) indication that something similar may be true for Poisson vertex algebras as well.

Since \cite{liVertexAlgebrasVertex2004}, there has been little work on star-deformations. However, there is renewed interest in the topic, as evidenced by the appearance of new works \cite{butsonInverseHamiltonianReduction2025a} and \cite{wangQuantumDispersionlessKdV2024}, where deformations of Poisson vertex algebras are applied in a geometric and integrable system setting, respectively.

\paragraph{Organization of the paper.} In Section \ref{section 2} we recall some facts and definitions about the theory of vertex algebras. In Section \ref{section 3} we define $\h$-vertex algebras and proceed with the study of their structure theory. The exposition is inspired by \cite{desoleFiniteVsAffine2006} and by classical texts on vertex algebras, mainly \cite{frenkelVertexAlgebrasAlgebraic2004, kacVertexAlgebrasBeginners1998}. In Section \ref{section 4} we discuss the problem of chiralization of star-products, we derive explicit formulae for a class of Poisson vertex algebras, which, as a consequence, give us formulae for the star-deformations of Poisson vertex algebras. We recall results about formal distributions and the calculus of finite differences in the Appendices \ref{sec:formaldistributions} and \ref{sec:finitedifferences}. 

\paragraph{Acknowledgments.} A first version of this paper was written during my PhD. I am grateful for the constant encouragement, guidance, and support provided by my supervisors Daniele Valeri and Gwyn Bellamy, which made this work possible. I would also like to thank Alberto De Sole for the many helpful conversations and for useful suggestions to simplify the proofs of Section \ref{section 4}, and Benoit Vicedo, for explaining to me the change of variable construction. I am grateful to the organizers of the programs ``Thematic Program on Vertex and Chiral Algebras'' (IMPA, Rio de Janeiro, Spring 2022), ``Infinite dimensonal algebras Lie and related structures'' (La Sapienza, Roma, September 2022) and ``Quantum symmetries: Tensor categories, Topological quantum field theories, Vertex algebras'' (CRM, Montréal, Autumn 2022), during which I made significant progress on this project. The author acknowledges the financial support of EPSRC, through a PhD scholarship and a research assistant position (Grant Number EP/V053728/1).

\paragraph{Notation.}

If not differently stated, all vector spaces are assumed to be over $\CC$.

We will often consider non-associative products. In case of products between three or more elements, unless we specify differently, we always mean that the products should be computed from right to left.

\begin{itemize}
    \item $U[z]$ polynomials with coefficients in a vector space $U$.
    \item $U[z,z^{-1}]$ Laurent polynomials.
    \item $U[[z]]$ formal power series.
    \item $U((z)):=(U[[z]])[z^{-1}]$ formal Laurent series.
    \item $U[[z,z^{-1}]]$ bilateral series.
    \item $\NN:=\{0,1,2,\dotso\}$ the natural numbers.
\end{itemize}
\section{Preliminaries}\label{section 2}

\subsection{Structure theory of vertex algebras}\label{sec:structuretheoryvertex}

We briefly recall the main results about the structure theory of vertex algebras, to help the comparison with the structure theory of $\h$-vertex algebras developed in Section \ref{section 3}. We refer to \cite{frenkelVertexOperatorAlgebras1988,kacVertexAlgebrasBeginners1998}.

\begin{defi}\label{def va}
    A vertex algebra is the data of a vector space $V$, a nonzero vector $\z\in V$ (called the vacuum vector), and a linear map $Y:V\rightarrow \End(V)[[z,z^{-1}]]$ (called the state-field correspondence), denoted by $$a\mapsto Y(a,z)=\sum_{n\in\ZZ}{a_{(n)}z^{-n-1}}, $$ satisfying the following axioms:
    \begin{enumerate}[(i)]
        \item (Fields) $Y(a,z)$ is a field for all $a\in V$, i.e. for all $a,b\in V$, $a_{(n)}b=0$ for $n\gg0$. The fields $Y(a,z)$ are called vertex operators.

        \item (Vacuum) $Y(\z,z)=\id_V $, $Y(a,z)\z\in V[[z]] $ and $Y(a,z)\z|_{z=0}=a $.
        \item (Translation covariance) The endomorphism $\del\in\End(V) $ defined as $\del a:=a_{(-2)}\z $, for all $a$ in $V$, is called the (infinitesimal) translation operator. Then
        $$[\del,Y(a,z)]=\del_z Y(a,z).$$
        \item (Locality) For all $a,b\in V$, there exists $N>0$ (dependant on $a$ and $b$) such that
        $$ (z-w)^N[Y(a,z),Y(b,w)]=0. $$
    \end{enumerate}
\end{defi}

\begin{theorem}\label{thm:structurevertexalgebra}
    Let $V$ be a vertex algebra. The vertex operators $Y(a,z)$ satisfy the following identities:
    \begin{enumerate}
        \item $Y(a,z)\z=e^{z\del}a$, for all $a\in V$;
        \item $(\del a)_{(n)}=-n a_{(-n-1)}$, for all $a\in V$ and $n\in\ZZ$;
        \item (Skewsymmetry)
        $$Y(a,z)b=e^{z\del}Y(b,-z)a, \quad \forall a,b\in V; $$
        \item ($n$-product identity)
        $$Y(a_{(n)}b,z)=Y(a,z)_{(n)}Y(b,z), \quad \forall a,b\in V; $$
        \item (Borcherds identity)
        \begin{equation}\label{eq borcherds}
    \begin{aligned}
        i_{z,w}(z-w)^{n}Y(a,z)Y(b,w)-i_{w,z}(z-w)^nY(b,w)Y(a,z)\\
        =\sum_{j\geq0}\frac{1}{j!}Y(a_{(n+j)}b,w)\del_w^{j}\delta(z-w), \quad \forall a,b\in V, n\in\ZZ,
    \end{aligned}
    \end{equation}
    where $i_{z,w}$ and $i_{w,z}$ are the expansion in the domains $|w|<|z|$ and $|z|<|w|$  \eqref{def espansione w>z}.
    \end{enumerate}
\end{theorem}

\begin{oss}
    The $(n)$-product identity implies the following formula for the OPEs of vertex operators (see Theorem \ref{teo OPE}):
    \begin{equation*}
        Y(a,z)Y(b,w)=\sum_{n\geq0}\frac{Y(a_{(n)}b,w)}{i_{z,w}(z-w)^{n+1}}+{:}Y(a,z)Y(b,w){:}\, .
    \end{equation*}
    Thus, the singular part of the OPE is controlled by the non-negative $(n)$-products.
\end{oss}
The constraints the vertex operators need to satisfy force them to be unique in the following sense.

\begin{theorem}[Goddard's uniqueness theorem]
    Let $V$ be a vertex algebra and $B(z)$ an $\End(V)$-valued field, which is mutually local with all the fields $Y(a,z)$, $a\in V$. Suppose that, for some $b\in V$, $B(z)\z=e^{z\del}b$.    Then $B(z)=Y(b,z)$.
\end{theorem}
A consequence of Goddard's uniqueness theorem is the following result, which is useful for constructing examples of vertex algebras.

\begin{theorem}[Reconstruction Theorem]
    Let $V$ be a vector space, $\z$ a nonzero element in $V$, and $\del\in \End(V)$. Let $\{a^i(z)\}_{i\in I} $, ($I$ an index set) be a collection of $\End(V)$-valued fields such that:
    \begin{enumerate}
        \item $[\del, a^i(z)]=\del_z a^i(z)$;
        \item $\del \z=0$ and $a^i(z)\z \in V[[z]] $;
        \item the linear map defined by $a^i(z)\mapsto a^i:=a^i(z)\z|_{z=0}$ is injective;
        \item $a^i(z)$ and $a^j(z)$ are mutually local for all $i, j\in I$;
        \item the vectors $a^{i_1}_{(j_1)}\dots a^{i_n}_{(j_n)}\z $, with $j_s<0$ and $i_s\in I$ span $V$.
    \end{enumerate}
    Then the formula 
    \begin{equation}
        \begin{aligned}
            Y(a^{i_1}_{(j_1)}\dots &a^{i_n}_{(j_n)}\z, z)={:}\frac{\del^{-j_1-1}}{(-j_1-1)!}Y(a^{i_1},z)\dots \frac{\del^{-j_n-1}}{(-j_n-1)!} Y(a^{i_n},z)\!:\, ,
        \end{aligned}
    \end{equation}
    defines the (unique) structure of vertex algebra on $V$ such that $\z$ is the vacuum vector, $\del$ the infinitesimal translation operator, and $Y(a^i,z)=a^i(z)$, for all $i\in I$.
    
\end{theorem}

\subsubsection{The conformal structure}
In many important examples (in particular, all of those coming from conformal field theories), the vertex algebra comes with an associated conformal structure. That is, there is a vector $\o\in V$ such that $Y(\o,z)=L(z):=\sum_{n\in\ZZ}L_nz^{-n-2}$ (note the unusual indexing) satisfies the following:
\begin{enumerate}[(i)]
    \item The operators $\{L_n\}$ generate the Virasoro Lie algebra:
    $$[L_m,L_n]=(m-n)L_{m+n}+\delta_{m,-n}\frac{m^3-m}{12}c\id_V, $$
    for some $c\in\CC$, called the central charge;
    \item $L_{-1}=\del$;
    \item $L_0$ is a diagonalizable operator. 
\end{enumerate}

\begin{defi}
    A vertex algebra $V$ with a conformal vector $\o$ is called a {\em vertex operator algebra} (VOA for short).
\end{defi}
Let $V$ be a VOA. The operator $L_0$ induces a grading $V=\bigoplus V[\Delta]$, called the {\em conformal grading}, by eigenspace decomposition. For an eigenvector $a\in V$, let $\Delta_a$ be the corresponding eigenvalue.

\subsection{\texorpdfstring{$\lambda$}{lambda}-bracket formalism}\label{sec:lambdaformalism}

Let $V$ be a vertex algebra. Since $(\del a)_{(n)}=-n a_{(n-1)}$ for all $a\in V$, $n\in\ZZ$, all the negative products can be obtained from the $(-1)$-product via
\begin{equation}\label{eq:translationcovariancenproducts}
    a_{(n)}b=\frac{1}{n!}(\del^n a)_{(-1)}b, \quad \forall a,b\in V.
\end{equation}
The whole vertex algebra is then determined by the $(-1)$ product and the non-negative products (or by the singular part of the OPEs, from the physical point of view). The non-negative products can be packed together to create a Lie-like structure, called the $\l$-bracket. The following definition is due to Kac \cite{kacVertexAlgebrasBeginners1998}.

\begin{defi}\label{def:vertexliealgebra}
A vertex Lie algebra is a vector space $V$ together with an endomorphism $\del$ and a bilinear operation, called a $\l$-bracket
$$[\cdot_\l\cdot]:V\otimes V\rightarrow V\otimes\CC[\l], $$
that satisfies the axioms:
\begin{enumerate}[(i)]
    \item (skewsymmetry) $[a_{\l} b]=-[b_{-\l-\del}a]$;
    \item (sesquilinearity) $[\del a_\l b]=-\del[a_\l b], \qquad [a_\l \del b]=(\l+\del)[a_\l b]$;
    \item (Jacobi identity) $[[a_\l b]_{\l+\mu}c]=[a_\l[b_\mu c]]-[b_\mu [a_\l c]]$.
\end{enumerate}
Note, here we have used the following notation: if $[a_\l b]=\sum_{n\geq0}{\frac{\l^n}{n!}a_{(n)}b}$, then by $[a_{\l+\del} b]$ we mean $\sum_{n\geq0}{\frac{(\l+\del)^n}{n!}(a_{(n)}b)}$, with $\del$ acting on the coefficients $a_{(n)}b$.
\end{defi}

\begin{oss}
    Every vertex algebra is a vertex Lie algebra, with $\del$ given by the infinitesimal translation operator, and the $\l$-bracket defined as
    \begin{equation}
        \brl{a}{b}:=\Res_z(e^{\l z}Y(a,z)b)=\sum_{n\geq0}\frac{\l^n}{n!}a_{(n)}b.
    \end{equation}
\end{oss}
The following is an equivalent definition of a vertex algebra, due to Bakalov and Kac \cite{bakalovFieldAlgebras2003}.

\begin{defi}\label{def va l bracket}
    A vertex algebra is a quintuple $(V,\ket{0},\del,[\cdot_\l\cdot],:\,:)$ such that
\begin{enumerate}[(i)]
    \item $(V,\del,[\cdot_\l\cdot])$ is a vertex Lie algebra;
    \item $(V,\z,\del,:\,:)$ is a unital, non-commutative, non-associative differential algebra (i.e.
$\del$ is a derivation of the product $:\,:$), with unity $\z$, which satisfies the following two axioms:
    \begin{equation}
       {:}ab{:}-{:}ba{:}\,=\int_{-\del}^{0}{[a_{\l}b]d\l},
    \end{equation}
    \begin{equation}
        {:}({:}ab{:})c{:}-{:}a({:}bc{:}){:}={:}\left(\int_{0}^{\del}d\l\, b\right)[a_{\l}c]{:}+{:}\left(\int_{0}^{\del}d\l\, a\right)[b_{\l}c]{:}\, .
    \end{equation}
    \item The $\l$-bracket and the product $:\,:$ are related by the so-called right and left non-commutative Wick formulae
    \begin{equation}
        \brl{a}{{:}bc{:}}={:}\brl{a}{b}c{:}+{:}b\brl{a}{c}{:}+\int_{0}^{\l}{[[a_{\l}b]_\mu c]d\mu},
    \end{equation}
    \begin{equation}
        [{:}ab{:}_{\l}c]={:}a[b_{\l+\del^{(1)}}c]{:}+{:}b[a_{\l+\del^{(1)}}c] {:}+\int_0^\l{[b_\mu[a_{\l-\mu}c]]d\mu}.
    \end{equation}
\end{enumerate}
In Definition \ref{def va l bracket} we have used the following notation: 
$${:}\left(\int_{0}^{\del}{d\l}\,b\right)[a_{\l}c]{:}\ =\sum_{n\geq0}{{:}\left(\int_0^\del{\frac{\l^n}{n!}}d\l \, b\right)a_{(n)}c{:}}=\sum_{n\geq0}{:}\frac{\del^{j+1}b}{(j+1)!}(a_{(j)}c){:}\, . $$
 By $\del^{(1)}$ (respectively $\del^{(2)}$) we mean that the $\del$ acts only on the first term of the product (respectively the second), i.e. we have $$:\!b[a_{\l+\del^{(1)}} c]\!:\ =\sum_{n\geq0}:\!\left(\frac{(\l+\del)^n}{n!}b\right) a_{(n)}c\!:.$$
 The product $:\,:$ is called the normally ordered product and corresponds to the $(-1)$ product of Definition \ref{def va}.
\end{defi}
Definition \ref{def va l bracket} is usually more convenient for computations and allows us to think of vertex algebras as analogues of associative algebras. To any associative algebra we can associate a Lie algebra using the commutator, and the universal enveloping algebra provides an adjoint to this functor. Similarly, to any vertex algebra, we can associate a vertex Lie algebra (that now controls both the commutator and the associator of the normally ordered product), and there is an adjoint universal construction. 

\begin{theorem}\label{thm:universalvertexalgebra}
    Let $R$ be a vertex Lie algebra. There exists a ``universal vertex algebra'' $V(R)$ such that $R\hookrightarrow V(R)$ and, for any vertex algebra $V$ and vertex Lie algebra morphism $f:R\rightarrow V$, there is a vertex algebra morphism $V(R)\rightarrow V$ such that the following diagram commutes
    \begin{equation}
        \begin{tikzcd}
V(R) \arrow[rd]                          &   \\
R \arrow[u, "i", hook] \arrow[r, "f"] & V
\end{tikzcd}
    \end{equation}
\end{theorem}
\begin{proof}
    We define a bracket on $R$ as
    \begin{equation}
        [a,b]:=\int^0_{-\del}\brl{a}{b}d\l.
    \end{equation}
    By a straightforward check, using properties of the integral, this is a Lie bracket. Consider the universal enveloping algebra $\UUU(R_{Lie})$ associated to this Lie algebra structure. We define a vertex algebra structure on $\UUU(R_{Lie})$ by constructing a normally ordered product and a $\l$-bracket satisfying the axioms of Definition \ref{def va l bracket}. It is sufficient to define them on the PBW basis of $\UUU(R_{Lie})$. We proceed by mutual recursion on the degree of the monomials. For $a,b\in R$ the $\l$-bracket is the one on $R$. For all $a\in R$ and $B\in\UUU(R_{Lie})$, ${:}aB{:}=aB$. The formulae of Definition \ref{def va l bracket} extend these recursively, in a unique way, to the whole of $\UUU(R_{Lie})$. Then $V(R)$ is $\UUU(R_{Lie})$ with this vertex algebra structure. The PBW property of $\UUU(R_{Lie})$ implies that the PBW monomials with respect to the normally ordered product form a basis of $V(R)$. The map for the universal property is defined on the PBW monomials as
    $ {:}u_1\cdots u_n{:}\mapsto {:}f(u_1)\cdots f(u_n){:}\, . $
\end{proof}

\begin{oss}
    This construction of $V(R)$ differs from the one most commonly found in the literature (see for example \cite{frenkelVertexAlgebrasAlgebraic2004}). A proof of their equivalence can be found in \cite[Section 1.7]{desoleFiniteVsAffine2006}. 
\end{oss}
Theorem \ref{thm:universalvertexalgebra} holds in the greater generality of {\em non-linear vertex Lie algebras} (see \cite[Theorem 3.9]{desoleFreelyGeneratedVertex2005a}). A non-linear vertex Lie algebra is a vector space $R$ with a $\l$-bracket valued in $\Tt(R)[\l]$, that satisfies the same axioms of Definition \ref{def:vertexliealgebra}. Some technicalities are needed to give meaning to the axioms on $\Tt(R)$. We shall consider a special case of non-linear vertex Lie algebras, that we call ``sub-linear''.

\begin{defi}\label{def:sublinearLCA}
    A sub-linear vertex Lie algebra is a vector space $R$, together with an endomorphism $\del$ and a $\l$-bracket
    $$\brl{\cdot}{\cdot}:R\otimes R\rightarrow (R\oplus \CC)[\l], $$
    such that the axioms of Definition \ref{def:vertexliealgebra} hold on $R\oplus \CC$, with the $\l$-bracket extended by
    $$\brl{\CC}{-}=\brl{-}{\CC}=0. $$
\end{defi}
Alternatively, a sub-linear vertex Lie algebra is the quotient of a central extension of a vertex Lie algebra by putting the central element equal to a constant.

\begin{es}\label{es:sublinearvertexliealgebras}
    Many important examples of vertex algebras are of the form $V(R)$, where $R$ is a sub-linear vertex Lie algebra.
    \begin{enumerate}
        \item The $\beta\gamma$-system of rank $n$ is the vertex algebra $V(R)$, where $R$ is the free 
        $\CC[\del]$-module generated by $x_1,\dots,x_n,y_1,\dots,y_n$, with bracket induced by
        $$\brl{x_i}{y_j}=\delta_{i,j} \quad \forall i,j\in\{1,\dots,n\}. $$
        \item Let $U$ be a vector space with a symmetric bilinear form $(\cdot | \cdot)$. The free-boson vertex algebra is $V(R)$, where $R=\CC[\del]\otimes U$ with $\l$-bracket induced by
        $$\brl{u}{v}=\l (u|v), \quad \forall u,v\in U. $$
        The special case where $U$ is one-dimensional is also known as the Heisenberg vertex algebra.
        \item Let $\g$ be a Lie algebra with a symmetric, invariant bilinear form $(\cdot | \cdot)$. The affine vertex algebra at level $k\in\CC$, denoted as $V^k(\g)$, is $V(R)$, where $R=\CC[\del]\otimes \g$, with $\l$-bracket induced by
        $$\brl{u}{v}=[a,b]+k(u|v)\l, \quad \forall u,v\in U. $$
        \item The Virasoro vertex algebra $Vir_c$, with central charge $c\in\CC$, is $V(R)$, where $R$ is the free $\CC[\del]$-module generated by a single element $L$, with $\l$-bracket
        $$\brl{L}{L}=(2\l+\del)L+\frac{c}{12}\l^3. $$
    \end{enumerate}
\end{es}

\subsubsection{The integral \texorpdfstring{$\l$}{lambda}-bracket}\label{sec:integrallbracket}

There is an additional, equivalent definition of a vertex algebra, introduced by De Sole and Kac in \cite[Section 1.9]{desoleFiniteVsAffine2006}. It rewrites the axioms of Definition \ref{def va l bracket} in terms of a single operator $I_\l(\cdot,\cdot)$ called the integral $\l$-bracket. 

\begin{defi}\label{def va integral}
A vertex algebra is a quadruple $(V,\z,T,I_\l)$, where $V$ is a vector space, $0\neq\z\in V$, $\del\in\End(V)$ and $I_\l:V\otimes V\rightarrow V\otimes\CC[\l]$ is a bilinear map that satisfies:
\begin{enumerate}[(a)]
\item
(unity)
\begin{equation}
I_{\lambda}(|0\rangle,a)=I_{\lambda}(a,|0\rangle)=a\, \quad \forall a\in V;
\end{equation}
\item (sesquilinearity)
\begin{equation}
\frac{d}{d\lambda }I_{\lambda}(\del a,b)=-\lambda \frac{d}{d\lambda}I_{\lambda}(a,b)\,, \quad \del\left(I_{\lambda}(a,b)\right)=I_{\lambda}(\del a,b)+I_{\lambda}(a,\del b)\, \quad \forall a,b\in V;
\end{equation}
\item
(skewsymmetry)
\begin{equation}
I_{\lambda}(a,b)=I_{-\lambda-\del}(b,a)\,, \quad \forall a,b\in V;
\end{equation}
\item
(Jacobi identity)
\begin{equation}
I_{\lambda}(a,I_{\mu}(b,c))-I_{\mu}(b,I_{\lambda}(a,c))
=I_{\lambda+\mu}(I_\lambda(a,b)-I_{-\mu-\del}(a,b),c)\,, \quad \forall a,b,c\in V.
\end{equation}
\end{enumerate}
\end{defi}

\begin{oss}
    To go from Definition \ref{def va l bracket} to Definition \ref{def va integral}, define the integral $\l$-bracket $I_{\lambda}$ as 
\begin{equation}\label{def integral l bracket}
    I_\l(a,b):= \ :\!ab\!:+\int_0^\l [a_x b] dx.
\end{equation}

To go the other way, define the $\l$-bracket and the normally ordered product as
\begin{equation}
    :\! ab\!: \ =I_0(a,b), \hspace{4 mm} [a_\l b]=\frac{d}{d\l}I_\l(a,b).
\end{equation}
\end{oss}

\begin{oss}
    Intuitively, $I_\l(a,b)$ should be thought as $\int^\l_{-\infty}\brl[x]{a}{b}dx$, subject to an ``integral version'' of the vertex Lie algebra axioms. For example, for skew-symmetry,
$$I_\l(a,b)=\int^\l_{-\infty}\brl[x]{a}{b}dx=-\int^\l_{-\infty}\brl[-x-\del]{b}{a}dx=\int^{-\l-\del}_{-\infty}\brl[y]{b}{a}dy=I_{-\l-\del}(b,a), $$
using a change of variables. 
\end{oss}

The integral $\l$-bracket formalism gives a more elegant definition of a vertex algebra, which packs together properties of the normally ordered product and the $\l$-bracket in the axioms of a vertex Lie algebra written in ``integral form''. For example, the integral form of the Jacobi identity implies both the Jacobi identity of the $\l$-bracket, the non-commutative Wick formulae, and the quasi-associativity of the normally ordered product. This will be useful for computations of the chiralization of star-products, as explained in Section \ref{section 4}.

\subsection{Classical limit}\label{sec:poissonvertex}

A Poisson algebra is a commutative algebra with an additional, compatible Lie algebra structure. Similarly, a Poisson vertex algebra is a commutative vertex algebra with a compatible vertex Lie algebra structure. A commutative vertex algebra is a vertex algebra with $\l$-bracket equal to $0$. According to Definition \ref{def va l bracket}, this is just a commutative differential algebra. 

\begin{defi}\label{def:poissonvertexalgebra}
A Poisson vertex algebra is a unital, associative, commutative differential algebra $(\Vv,1,\cdot,\partial)$ with a bilinear operator, called the Poisson $\l$-bracket
$$\{\cdot_\l \cdot\}:\Vv\otimes\Vv \rightarrow \Vv\otimes\CC[\l], $$
satisfying the following properties:
\begin{enumerate}
    \item $(\Vv,\del,\{\cdot,\cdot\})$ is a vertex Lie algebra;
    \item (right Leibniz rule) $\{a_\l bc\}=\{a_\l b\}c+\{a_\l c\}b$;
    \item (left Leibniz rule) $\{ab_\l c\}=\{a_{\l+\del} c\underrightarrow{\}} b+\{b_{\l+\del}c\underrightarrow{\}} a $.
\end{enumerate}
By $\{a_{\l+\del} c\underrightarrow{\}} b$ we mean that $\del$ acts on the term indicated by the arrow, i.e. 
$$\{a_{\l+\del} c\underrightarrow{\}} b=\sum_{n\geq0}{\frac{1}{n!}(a_{(n)}c)(\l+\del)^n b}.$$
\end{defi}

\begin{es}
    Let $R$ be a (sub-linear) vertex Lie algebra. Then the symmetric algebra $\Ss(R)$ has a natural structure of Poisson vertex algebra, with Poisson $\l$-bracket
    $$\pbrl{a}{b}=\brl{a}{b},$$
    for all $a,b\in R$ and then extended by the left and right Leibniz rule. 
\end{es}

\begin{oss}
    If $\g$ is a finite-dimensional Lie algebra, then $\Ss(\g)$ is isomorphic to $\CC[\g^*]$ with the Kirillov–Kostant–Souriau Poisson bracket. Similarly, if $R=\CC[\del]\otimes\g$, then $\Ss(R)$ is isomorphic to $\CC[J_\infty \g^*]$, the algebra of functions over the arc space of $\g^*$. That is, $\Ss(R)$ is the algebra of differential polynomials on $\g$.
\end{oss}

\begin{defi}
    A {\em good filtration} on a vertex algebra $V$ is a filtration $V=\bigcup_{n\geq0} F_n V$ such that:
    \begin{enumerate}
        \item $\del F_nV\subset F_nV$;
        \item ${:}(F_nV)(F_mV){:}\subset F_{n+m}V $;
        \item $\brl{F_nV}{F_mV}\subset F_{n+m-1}V[\l] $, with $\l$ placed in degree $0$.
    \end{enumerate}
\end{defi}

If $V$ has a good filtration, then $\gr(V)$ has a natural structure of graded Poisson vertex algebra, with derivation
$\del (a+F_{n-1}V)=\del a+ F_{n-1}V$,
commutative product
$$ (a+F_{n-1} V)(b+ F_{m-1} V)={:}ab{:}+F_{n+m-1}V,   $$
and Poisson $\l$-bracket
$$\pbrl{a+F_{n-1}V}{b+F_{m-1}V}=\brl{a}{b}+F_{n+m-2}V[\l], $$
for $a\in F_nV$ and $b\in F_m V$. It is easy to see, from the identities of Definition \ref{def va l bracket} and from the fact that the $\l$-bracket is of negative degree, that all the axioms of Poisson vertex algebras are satisfied.

\begin{es}
    Let $R$ be a vertex Lie algebra and consider its universal vertex algebra $V(R)$. The PBW basis of $V(R)$ induces a good filtration by the degree of the PBW monomials. This follows easily from the relations in Definition \ref{def va l bracket}. The associated graded is isomorphic to $\Ss(R)$ as a Poisson vertex algebra. This can be extended to sub-linear vertex Lie algebras as well. To do so, we need to introduce a formal variable of degree $1$ and homogenize the $\l$-bracket, before computing the associated graded. 
\end{es}

\subsection{The Zhu algebra}\label{sec:Zhu}

Let $(V, \del, \z, Y)$ be a vertex algebra. Let $\h\in\CC^\times$ and consider the formal, invertible change of variables $x=\frac1\h\log(1+\h z)$. Define the following $\h$-deformed state-field correspondence:
\begin{equation}\label{eq:defchangeofvariable}
    Y_\h(a,z):=Y(a,x)=Y\left(a,\frac{1}{\h}\log(1+\h z)\right), \quad \forall a\in V.
\end{equation}

\begin{prop}
    For all $a\in V$, the $\h$-deformed vertex operator $Y_\h(a,z)$ is a well-defined $\End(V)$-valued quantum field.
\end{prop}
\begin{proof}
    We need to show that $Y_\h(a,z)b\in V((z))$ for all $a,b\in V$. Write
    $Y_\h(a,z)=\sum_{n\in\ZZ}a_{(n,\h)}z^{-n-1},$
    where, using the change of variables formula \eqref{eq:changeofvariabelRes}
    \begin{align*}
        a_{(n,\h)}&=\Res_z(z^n Y_\h(a,z))=\Res_z\left(\frac{1}{\h^n}(e^{\h x}-1)^n Y(a, x) \right)=\Res_x\left(\frac{1}{\h^n}(e^{\h x}-1)^ne^{\h x} Y(a, x) \right).
    \end{align*}
    We expand 
    $$\frac{1}{\h^n}(e^{\h x}-1)^ne^{\h x}=\sum_{k\geq n} c_n x^{n},$$
    so that $a_{(n,\h)}=\sum_{k\geq n}c_n a_{(n)}.$ In particular, $a_{(n,\h)}$ is a well-defined endomorphism of $V$, as $a_{(n,\h)}b\in V$ for all $n$, and $a_{(n,\h)}b=0$ for $n\gg 0$.
\end{proof}
Define the two following products, for $a,b\in V$:
\begin{align}
    &a\ast_\h b = a_{(-1,\h)}b=\Res_x\left(\frac{\h e^{\h x}}{e^{\h x}-1}Y(a,x)b \right); \\
    &a\circ_\h b = a_{(-2,\h)}b=\Res_x\left(\frac{\h^2 e^{\h x}}{(e^{\h x}-1)^2}Y(a,x)b \right).
\end{align}

\begin{theorem}
    The space $(V\circ_\h V)$ is a two-sided ideal with respect to the $\ast_\h$ product. Moreover, the $\ast_\h$ product induces the structure of a unital associative algebra on the quotient
    \begin{equation}\label{eq:defzhualgebra}
        \Zhu_\h(V)=V/(V\circ_\h V). 
    \end{equation}
\end{theorem}
\begin{proof}
    This is proved in \cite[Proposition 6.1]{huangDifferentialEquationsDuality2005} using lengthy computations with the residue. We provide a different, more natural proof, using the formalism of $\h$-vertex algebras introduced in Section \ref{section 3} (see Theorem \ref{thm:zhu}).
\end{proof}

\begin{defi}\label{def:zhualgebrahuang}
    The Zhu algebra associated to $V$ is $\Zhu(V)$ defined as in \eqref{eq:defzhualgebra}, with $\h$ set equal to $1$.
\end{defi}

\begin{oss}
    This construction of the Zhu algebra is due to Huang \cite{huangDifferentialEquationsDuality2005} and, despite being equivalent, it appears very far from the original construction by Zhu \cite{zhuModularInvarianceCharacters1996}. From our point of view, Definition \ref{def:zhualgebrahuang} is more convenient; we briefly explain the other construction and their connection below. We refer to \cite{desoleFiniteVsAffine2006} and \cite{vanekerenShortConstructionZhu2019} for a streamlined exposition of the two original constructions.
\end{oss}
Let $V$ be a VOA, with conformal grading $V=\bigoplus V[\Delta]$. Define the following $\h$-deformed state-field correspondence:
\begin{equation}\label{eq:hvertexoperatorsconformal}
    Y_\h[a,z]=(1+\h z)^{\Delta_a} Y(a,z)=\sum_{n\in\ZZ}a_{[n,\h]}z^{-n-1},
\end{equation}
for $a\in V[\Delta_a]$ and extended by linearity. Define two products on $V$:
\begin{equation}\label{def * product}
        a\bullet_\h b:=a_{[-1,\h]}b=\sum_{k\geq0}\binom{\Delta_a}{k}\h^k a_{(k-1)}b,
\end{equation}
\begin{equation}\label{def circ product}
     a\times_\h b:=a_{[-2,\h]}b=\sum_{k\geq0}\binom{\Delta_a}{k}\h^k a_{(k-2)}b,
\end{equation}
where the second equality holds only for homogeneous $a,b\in V$. Then again, one can prove that $V\times_\h V$ is a two-sided ideal for $\bullet_\h$ and that the quotient is a unital associative algebra. Then one defines the Zhu algebra as
\begin{equation}\label{eq:defzhu2}
    \Zhu'(V):=  \restr{ \left(\quotient {V}{V\times_\h V} \right)}{\h=1}. 
\end{equation}
The equivalence of the two constructions is due to the following fact.

\begin{theorem}\label{thm:isozhualgebras}
    For any $a\in V$, define 
    \begin{equation}\label{eq:changeofvariablezhu}
        Y'(a,z):=Y\left(e^{\h z L_0}a, \frac{e^{\h z}-1}{\h}\right).
    \end{equation}
    Then $Y'$ gives a VOA structure on $V$ and there exists an isomorphism of vertex algebras $T:(V, \z, Y)\rightarrow (V, \z, Y')$. In particular,
    \begin{equation}\label{eq:isochangeofvariable}
        Y'\left(a, z\right)=TY(T^{-1}a,z)T^{-1}.
    \end{equation}
\end{theorem}
\begin{proof}
    This was originally proved in \cite[Theorem 4.2.2]{zhuModularInvarianceCharacters1996}, and is a special case of Huang's change of variables formula (see \cite[Section 7.4]{huangTwodimensionalConformalGeometry1997}). 
\end{proof}
Consider now $Y'_\h(a,z)$. By \eqref{eq:defchangeofvariable} and \eqref{eq:changeofvariablezhu}, 
\begin{align*}
    Y'_\h(a,z)=&Y'\left(a, \frac{1}{\h}\log(1+\h z)\right)
    =Y\left((1+\h z)^{L_0} a, z\right)= (1+\h z)^{\Delta_a} Y(a,z)=Y_\h[a,z]. 
\end{align*}
Then formula \eqref{eq:isochangeofvariable} gives
\begin{equation*}
    Y_\h[a,z]=Y'\left(a, \frac{1}{\h}\log(1+\h z)\right)=TY_\h(T^{-1}a,z)T^{-1},
\end{equation*}
or, in other words,
\begin{equation*}
    T(a_{(n,\h)}b)=(Ta)_{[n,\h]}(Tb), \quad \forall a,b\in V, \forall n\in\ZZ.
\end{equation*}
Hence $T$ descends to an algebra isomorphism $T:\Zhu(V)\rightarrow \Zhu'(V)$.

\begin{oss}
    The isomorphism between the two constructions of the Zhu algebra is remarkable, as the second construction depends on the conformal structure, while the first one does not. In particular, this proves that the Zhu algebras associated to different conformal structures are isomorphic. The change of variables construction is more general, as it also works for vertex algebras that do not admit a conformal grading (of course, in those cases, the isomorphism $T$ cannot be defined).
\end{oss}

In \cite[Section 6]{desoleFiniteVsAffine2006}) a definition is given for the Zhu algebra of a Poisson vertex algebra, using the conformal grading definition. To the best of our knowledge, the construction via the change of variable has never been generalized to Poisson vertex algebras. We provide the construction in Section \ref{sec:zhuhvertex}.

\begin{oss}
    We have the following diagram:
    \begin{equation}\label{diagramma}
    \begin{tikzcd}
        &\text{Graded Poisson vertex algebras}\arrow[d,"\Zhu"]  &\text{Filtered vertex algebras}\arrow[l, "{\gr}"']\arrow[d,"\Zhu"']\\
        &\text{Graded Poisson algebras}   &\text{Filtered associative algebras}\arrow[l,"\gr"]
    \end{tikzcd}
\end{equation}
    In many interesting cases, it is commutative. In particular, if we start with $V=V(R)$, the enveloping algebra of a sub-linear vertex Lie algebra, then $\gr(\Zhu(V))=\Zhu(\gr(V))$, see \cite[Section 6]{desoleFiniteVsAffine2006}. It is however not true in general (for example, for simple quotients of affine vertex algebras). 
\end{oss}

\begin{defi}
    Let $A$ be an associative algebra such that $A=\Zhu(V)$, for some vertex algebra $V$. We say that $V$ is a {\em chiralization} of $A$. Similarly, if $\Aa$ is a Poisson algebra such that $\Aa=\Zhu(\Vv)$, for some Poisson vertex algebra $\Vv$, we say that $\Vv$ is a chiralization of $\Aa$.
\end{defi}
\section{\texorpdfstring{$\hbar$}{h}-Vertex algebras}\label{section 3}

We develop some results about the theory of $\h$-vertex algebras. The theory is similar to that of ordinary vertex algebras (see Section \ref{section 2} for reference). All the formulae reduce to the usual one for vertex algebras under the limit $\h\rightarrow0$. Some of the results in this section were already proved in \cite{desoleFiniteVsAffine2006} for the $\h$-deformed vertex operators $Y_\h[a,z]$, using their explicit definition in terms of the conformal grading. In those cases, we give different proofs that use only the axioms of an $\h$-vertex algebra.

\subsection{Structure theory of \texorpdfstring{$\hbar$}{h}-vertex algebras}\label{sec:structurehvertex}

Let $\h\in\CC^\times$. The definition of an $\h$-vertex algebra is the same as that of a vertex algebra (see Definition \ref{def va}), with an $\h$-deformed translation covariance axiom.

\begin{defi}\label{def h va}
    An $\h$-vertex algebra is the data of a vector space $V$, a nonzero vector $\z\in V$, and a linear map $Y_\h:V\rightarrow \End(V)[[z,z^{-1}]]$, denoted by
    $$a\mapsto Y_\h(a,z)=\sum_{n\in\ZZ}{a_{(n,\h)}z^{-n-1}}, $$ satisfying the following axioms:
    \begin{description}
        \item [(Fields)] for all $a,b\in V$, $a_{(n,\h)}b=0$ for $n\gg0$, i.e. $Y_\h(a,z)$ is a $\End(V)$-valued field for all $a\in V$.
        \item [(Vacuum)] $Y_\h(\z,z)=\id_V $, $Y_\h(a,z)\z\in V[[z]] $ and $Y_\h(a,z)\z|_{z=0}=a $, for all $a\in V$.
        \item [($\h$-translation covariance)] The endomorphism $\del\in\End(V) $ defined as $\del a:=a_{(-2,\h)}\z $, for all $a$ in $V$, is called the (infinitesimal) translation operator. Then
        $$[\del,Y_\h(a,z)]=(1+\h z)\del_z Y_\h(a,z).$$
        \item [(Locality)] For all $a,b\in V$, there exists $N>0$ (dependant on $a$ and $b$) such that
        $$ (z-w)^N[Y_\h(a,z),Y_\h(b,w)]=0. $$
    \end{description}
\end{defi}

Our first result is that $\h$-vertex algebras correspond to vertex algebras under the change of variables $x=\frac{1}{\h}\log(1+\h z)$.

\begin{prop}\label{prop:changeofvariable}
    Let $V$ be a vector space. If $(V,\z,\del,Y)$ is a vertex algebra structure on $V$, then $(V,\z,\del,Y_\h)$ is an $\h$-vertex algebra, with $\h$-deformed vertex operators $Y_\h(a,z)$ defined as 
    \begin{equation}\label{eq:hdeformedvertexoperator}
        Y_\h(a,z)=Y\left(a, \frac{1}{\h} \log(1+\h z) \right), \quad \forall a\in V.
    \end{equation}
    Conversely, if $(V,\z,\del,Y_\h)$ is an $\h$-vertex algebra, then $(V,\z,\del,Y)$ is a vertex algebra, where
    \begin{equation}
        Y(a,z):=Y_\h\left( a, \frac1\h(e^{\h z}-1) \right), \quad \text{ for all $a\in V$.}
    \end{equation}
\end{prop}
\begin{proof}
    Let $(V,\z,Y)$ be a vertex algebra and $Y_\h$ defined as in \eqref{eq:hdeformedvertexoperator}. Let $x=x(z)=\frac{1}{\h}\log(1+\h z)$ and $z=z(x)=\frac1\h(e^{\h z}-1)$. Then $Y_\h(a,z)=Y(a,x)$. Notice that, from Theorem \ref{thm:structurevertexalgebra} and \eqref{eq:changeofvariabelRes},
    \begin{align*}
        a_{(-2,\h)}\z &=\Res_x\left(\frac{\h^2 e^{\h x}}{(e^{\h x}-1)^2}Y(a,x)\z \right)= \Res_x\left(\frac{\h^2 e^{\h x}}{(e^{\h x}-1)^2}e^{x\del}a \right),
    \end{align*}
    which is equal to $\del a$, from the series expansion 
    $$\frac{\h^2 e^{\h x}}{(e^{\h x}-1)^2}=\frac1{x^2}-\frac{\h^2}{12}+O(x^2). $$
    The $\h$-translation covariance follows from:
    \begin{align*}
        [\del, Y_\h(a,z)]&=[\del, Y(a,x)]=\del_x Y(a,x)=\del_x Y_\h(a, z(x))= e^{\h x} \del_z Y_\h(a,z)=(1+\h z)\del_z Y_\h(a,z).
    \end{align*}
    Notice that, since $x(z)=z+O(z^2)$, we can write $(x(z)-x(w))^k=(z-w)^kh(z,w)$, for some $h(z,w)\in\CC[[z,w]]$ with $h(0,0)=1$. In particular, this means that $h(z,w)$ is invertible in $\CC[[z,w]]$. For all $a,b\in V$, there exists an $N\in\NN$ such that $$(x(z)-x(w))^N[Y(a,x(z)),Y(b,x(w))]=0,$$ so 
    $$(z-w)^Nh(z,w)^N[Y(a,x(z)),Y(b,x(w))]=0,  $$
    which implies locality for the $\h$-deformed vertex operators. Thus $(V,\z,\del,Y_\h)$ is an $\h$-vertex algebra. The other implication follows from an almost identical computation.
\end{proof}

We refer to the $\h$-vertex algebra constructed in Proposition \ref{prop:changeofvariable} as ``the $\h$-vertex algebra associated to a given vertex algebra''. We can look at $\h$-vertex algebras in two ways: as a deformation of vertex algebras, or as vertex algebras under a change of variables. Even if Proposition \ref{prop:changeofvariable} proves that $\h$-vertex algebras are in bijection with vertex algebras, their independent study still has useful applications, as the Zhu algebra is induced by the $\h$-deformed structure. 

\begin{oss}\label{oss:formalparameter2}
If we let $\h\rightarrow0$, then $\frac{1}{\h}\log(1+\h z)\rightarrow z$, so the change of variable is the identity. This is coherent with our interpretation of vertex algebras being $\h$-vertex algebras under the limit $\h\rightarrow0$.
\end{oss}

\begin{oss}\label{oss:hvaconformal}
    The $\h$-deformed vertex operators defined using the conformal grading $Y_\h[a,z]=(1+\h z)^{\Delta_a}Y(a,z)$ \eqref{eq:hvertexoperatorsconformal} induce another $\h$-vertex algebra structure, which is isomorphic to the one constructed in Proposition \ref{prop:changeofvariable}, as explained in Section \ref{sec:Zhu}. Notice that, in this case,
    $$a_{[-2,\h]}\z=\sum_{k\geq0}\binom{\Delta_a}{k}\h^k a_{(k-2)}\z=(\del+\h\Delta_a)a, $$
    for an homogeneous $a\in V$. Thus, the translation operator for this $\h$-vertex algebra is $\del+\h L_0$, which depends on the conformal structure. In \cite{desoleFiniteVsAffine2006}, the authors study the $\h$-vertex operators $Y_\h[a,z]$. They focus their attention on the interaction between $Y_\h[a,z]$ and $\del$. From the $\h$-vertex algebras point of view, this is a bit unnatural, as $\del$ is not the translation covariance operator of the $\h$-vertex algebra they are studying. If instead the correct operator $\del+\h L_0$ is considered, the formulae become simpler, as we show in the rest of this section.
\end{oss}

\iftrue

We recall the following existence and uniqueness lemma for a formal differential equation.

\begin{lemma}\label{lemma differential equation}
    Let $U$ be a vector space and let $R(z)\in\End(U)[[z]] $. Then the differential equation
    \begin{equation}\label{eq:differentialequation}
        \frac{d}{dz}f(z)=R(z)f(z),
    \end{equation}
    with the given initial data $f_0$ has a unique solution of the form 
    \begin{equation}
        f(z)=\sum_{n\geq0}{f_n z^n}, \ \ \ \ \ f_n\in U.
    \end{equation}
\end{lemma}

\paragraph{Notation.} Recall the expansion \begin{equation}\label{eq expansion (1+z)x}
    (1+\h z)^\alpha=\sum_{k\geq0}\frac{(\h \alpha)_{k,\h}}{k!}z^k.
\end{equation}
Let $U$ be a vector space and $R\in\End(U)$. We introduce the notation 
$$(1+\h z)^{R/\h}=\sum_{k\geq0}\frac{(R)_{k,\h}}{k!}z^k. $$

\begin{prop}\label{prop technical identities}
Let $V$ be an $\h$-vertex algebra and $a\in V$. Then 
\begin{enumerate}[(i)]
    \item $$    Y_\h(a,z)\z=(1+\h z)^{\del/\h}a; $$
    \item $$(1+\h z)^{-\del/\h}Y_\h(a,w)(1+\h z)^{\del/\h}=Y_\h\left(a,\frac{i_{w,z}(w-z)}{1+\h z}\right), $$
    where $i_{w,z}$ is the expansion in the domain $|z|<|w|$ \eqref{def espansione w>z}.
\end{enumerate}
\end{prop}
\begin{proof}
    Let $U$ be either $V$ or $\End(V)[[w,w^{-1}]] $; then $(i)$ and $(ii)$ are, respectively, identities in $U[[z]] $. Applying Lemma \ref{lemma differential equation}, we only need to show that both sides of equations $(i)$ and $(ii)$ satisfy the same formal differential equation, with the same initial condition. Due to the $\h$-translation covariance and vacuum axioms, the derivative of the left-hand side of $(i)$ is:
    $$\del_z Y_\h(a,z)\z=\frac{[\del,Y_\h(a,z)]\z}{(1+\h z)}=\frac{\del}{1+\h z}Y_\h(a,z)\z. $$
    Clearly, the right-hand side of (i) satisfies the same differential equation. Setting $z=0$ and using the vacuum axioms, we immediately get that both sides of $(i)$ satisfy the same initial condition too, hence they are equal.

    For $(ii)$, the left-hand side satisfies the formal differential equation
    $$\del_z f(z)= -\frac{\ad(\del)}{1+\h z}f(z),  $$
    with initial condition $f(0)=Y_\h(a,w)$. Using $\h$-translation covariance axiom,
    \begin{align*}
        \del_z Y_\h\left(a,g(z,w)\right)&=\frac{\del_z g(z,w)}{1+\h g(z,w)}\left[\del,Y_\h\left(a,g(z,w)\right)\right],
    \end{align*}
    for all $g(z,w)$ with $g(0,0)=0$. Thus, the right-hand side of $(ii)$ becomes
    $$ \del_z Y_\h\left(a,\frac{i_{w,z}(w-z)}{1+\h z}\right)=-\frac{\ad(\del)}{1+\h z}Y_\h\left(a,\frac{i_{w,z}(w-z)}{1+\h z}\right).$$
    Putting $z=0$ in the right-hand side, we get $Y_\h(a,w)$, so $(ii)$ is proved.
\end{proof}

\begin{prop}[Skewsymmetry]
    For every two elements $a$ and $b$ of an $\h$-vertex algebra $V$ the following relation holds:
    \begin{equation}\label{eq skewsymmetry}
        Y_\h(a,z)b=(1+\h z)^{\del/\h} Y_\h\left(b,-\frac{z}{1+\h z}\right )a.
    \end{equation}
\end{prop}
\begin{proof}
   This follows from the same proof of the skew-symmetry property of vertex algebras (\cite[Proposition 3.2.5]{frenkelVertexAlgebrasAlgebraic2004}), using $(1+\h z)^{-\del/\h}Y_\h(a,w)(1+\h z)^{\del/\h}=Y_\h\left(a,\frac{i_{w,z}(w-z)}{1+\h z}\right)$ instead of $e^{w \del}Y(a,z)e^{-w\del}$ $=Y(a,z+w)$.
\end{proof}

\begin{oss}
    Taking the coefficient of $z^{-n-1}$ ($n\in\ZZ$) of the right-hand side of equation \eqref{eq skewsymmetry}, we get the following formula for the skewsymmetry of the $(n,\h)$ products $(n\in\ZZ)$:
    \begin{equation}\label{eq skewsymmetry products}
        a_{(n,\h)}b=-\sum_{k\geq 0}(-1)^{k+n}\frac{(\del +\h(k+n+1))_{k,\h}}{k!}b_{(k+n,\h)}a.
    \end{equation}
\end{oss}
The following theorem is a generalization of Goddard's uniqueness theorem to $\h$-vertex algebras:

\begin{theorem}\label{teo Goddard's uniqueness}
    Let $V$ be an $\h$-vertex algebra and $B(z)$ an $\End(V)$-valued field, which is mutually local with all the fields $Y_\h(a,z)$, $a\in V$. Suppose that, for some $b\in V$:
    \begin{equation}\label{eq hypothesis Goddard}
        B(z)\z=(1+\h z)^{\del/\h}b.
    \end{equation}
    Then $B(z)=Y_\h(b,z)$.
\end{theorem}

\begin{proof}
    This follows from the same proof of Goddard's Theorem (\cite[Theorem 3.1.1]{frenkelVertexAlgebrasAlgebraic2004}), using $Y_\h(a,z)\z=(1+\h z)^{\del/\h}a$ instead of $Y(a,z)\z=e^{\del z}a$.
\end{proof}

\begin{prop}\label{prop n h prod identity}
    For all $a,b\in V$ and for all $n\in\ZZ$, the following $(n,\h)$-product identity holds:
    \begin{equation}\label{eq n h prod identity}
        (1+\h w)^{n+1}Y_\h(a_{(n,\h)}b,w)=Y_\h(a,w)_{(n)}Y_\h(b,w),
    \end{equation}
    where ${(n)}$ denotes the $n$-product of fields \eqref{def n products fields}. 
\end{prop}
\begin{proof}
    Both sides of equation \eqref{eq n h prod identity} are local to all $Y_\h(a,z)$, $a\in V$, due to Dong's Lemma \ref{lemma Dong}. We apply the left-hand side to $\z$, getting 
    $(1+\h w)^{\del/\h+n+1}a_{(n,\h)}b$.
    We only need to prove that the same is true for the right-hand side of \eqref{eq n h prod identity}, thanks to Goddard's uniqueness theorem.
    Note that 
    $$i_{w,z}(z-w)^nY_\h(b,w)Y_\h(a,z)\z $$
    only has positive powers of $z$, so its residue in $z$ is $0$. Thus
    \begin{align*}
        Y_\h(a,w)_{(n)}Y_\h(b,w)\z&= \Res_z(i_{z,w}(z-w)^nY_\h(a,z)(1+\h w)^{\del/\h}b).
    \end{align*}
    Applying part $(ii)$ of Proposition \ref{prop technical identities},
    \begin{align*}
        Y_\h(a,w)_{(n)}Y_\h(b,w)\z&= (1+\h w)^{\del/\h}\Res_z\left(i_{z,w}(z-w)^nY_\h\left(a, \frac{z-w}{1+\h w} \right)b\right) \\
        &=(1+\h w)^{\del/\h}\sum_{k\in\ZZ}a_{(k,\h)}b \Res_z\left((1+\h w)^{k+1}i_{z,w} (z-w)^{n-k-1} \right) \\
        &=(1+\h w)^{\del/\h+n+1}a_{(n,\h)}b,
    \end{align*}
    because $i_{z,w}(z-w)^n $ has a $z^{-1} $ term if and only if $n=-1$.
    \end{proof}

\begin{cor}\label{lemma for reconstruction theorem}
    For every collection of vectors $a^{1}, \dots, a^n\in V $ and positive integers $j_1, \dots, j_n$,
    \begin{equation}\label{eq reconstruction}
    \begin{aligned}
         \frac{1}{(j_1-1)!\cdots(j_n-1)!}{:}\del_z^{j_1-1}Y_\h(a^{1},z)\cdots \,&\del_z^{j_n-1} Y_\h(a^{n},z){:}\ = \\ &(1+\h z)^{\sum_i (-j_i+1)}Y_\h(a^1_{(-j_1,\h)}\cdots a^n_{(-j_n,\h)}\z, z),
    \end{aligned}       
    \end{equation}
    where ${:}a(z)b(z){:}\,=a(z)_{(-1)}b(z) $ is the normally ordered product of fields \eqref{def normally ordered product fields}. In particular, for any $a\in V$:
    \begin{equation}\label{eq translation covariance 2}
        Y_\h(\del a, z)=(1+\h z)\del_z Y_\h(a,z).
    \end{equation}
\end{cor}
\begin{proof}
    Let $a(z)$ and $b(z)$ be two fields. Then, by \eqref{eq del derivation n product fields}: 
    $${:}\left(\frac{\del_z^{n}}{n!}a(z)\right)b(z){:}\ =a(z)_{(-n-1)}b(z). $$
    Thus \eqref{eq reconstruction} follows from Proposition \ref{prop n h prod identity}. Equation \eqref{eq translation covariance 2} is a special case of \eqref{eq reconstruction}, when $n=1$ and $j_1=2$.
\end{proof}

\begin{cor}
    The translation operator $\del$ is a derivation of all $(n,\h)$-products.
\end{cor}
\begin{proof}
    This follows at once from $\h$-translation covariance and equation \eqref{eq translation covariance 2}.
\end{proof}
We state an $\h$-vertex algebra analogue of the reconstruction theorem for vertex algebras.

\begin{theorem}
    Let $V$ be a vector space, $\z$ a nonzero element in $V$, and $\del\in \End(V)$. Let $\{a^\alpha(z)\}_{\alpha\in A} $ ($A$ an index set) be a collection of $\End(V)$-valued fields such that:
    \begin{enumerate}
        \item $[\del, a^\alpha(z)]=(1+\h z)\del a^\alpha(z)$.
        \item $\del \z=0$, $a^\alpha(z)\z \in V[[z]] $.
        \item The linear map defined by $a^\alpha(z)\mapsto a^\alpha:=a^\alpha(z)\z|_{z=0}$ is injective.
        \item $a^\alpha(z)$ and $a^\beta(z)$ are mutually local for all $\alpha, \beta\in A$.
        \item The vectors $a^{\alpha_1}_{(j_1,\h)}\dots a^{\alpha_n}_{(j_n,\h)}\z $, with $j_s<0$ and $\alpha_s\in A$ span $V$.
    \end{enumerate}
    Then the formula 
    \begin{equation}
        \begin{aligned}
            Y_\h(a^{\alpha_1}_{(j_1,\h)}\dots &a^{\alpha_n}_{(j_n,\h)}\z, z)= \\ 
            &(1+\h z)^{-\sum_i (j_i+1)}:\!\frac{\del^{-j_1-1}}{(-j_1-1)!}Y_\h(a^{\alpha_1},z)\dots \frac{\del^{-j_n-1}}{(-j_n-1)!} Y_\h(a^{\alpha_n},z)\!:\, ,
        \end{aligned}
    \end{equation}
    defines the (unique) structure of an $\h$-vertex algebra on $V$ such that $\z$ is the vacuum vector, $\del$ the $\h$-infinitesimal translation operator, and $Y_\h(a^\alpha,z)=a^\alpha(z)$.
    
\end{theorem}
\begin{proof}
    The proof is identical to the usual one for vertex algebras (see for reference \cite[Theorem 4.4.1]{frenkelVertexAlgebrasAlgebraic2004}), using the $\h$-vertex algebra version of Goddard's theorem and Corollary \ref{lemma for reconstruction theorem}.
\end{proof}

\begin{theorem}\label{teo h borcherds}
   Let $V$ be an $\h$-vertex algebra; the following $\h$-Borcherds identity holds, for all $a,b\in V$ and all $n\in \ZZ$:
   \begin{equation}\label{eq h borcherds}
    \begin{aligned}
        i_{z,w}(z-w)^{n}Y_\h(a,z)Y_\h(b,w)-i_{w,z}(z-w)^nY_\h(b,w)Y_\h(a,z)\\
        =\sum_{j\geq0}(1+\h w)^{n+j+1}Y_\h(a_{(n+j,\h)}b,w)\del_w^{j}\delta(z-w)/j! \ . 
    \end{aligned}
\end{equation}
    In particular, for $n=0$, we obtain the commutator formula:
    \begin{equation}\label{eq h commutator formula}
        [Y_\h(a,z),Y_\h(b,w)]=\sum_{n\geq0}(1+\h w)^{n+1}Y_\h(a_{(n,\h)}b,w)\del^{n}_w\delta(z-w)/n!.
    \end{equation}
\end{theorem}
\begin{proof}
   The left-hand side of equation \eqref{eq h borcherds} is a local distribution. In fact, if we multiply it by $(z-w)^N$ such that $N+n\geq0$, it becomes equal to 
    $$ (z-w)^{N+n}[Y_\h(a,z),Y_\h(b,w)],$$
    which is local by the locality axiom. Then \eqref{eq h borcherds} follows from Decomposition Theorem \ref{theorem decomposition} and Proposition \ref{prop n h prod identity}.
\end{proof}

\begin{cor}[OPE expansion]
    The Operator Product Expansion of the operators $Y_\h(a,z)$, $Y_\h(b,w)$ has the following form:
    \begin{equation}\label{eq OPE h vertex operators}
        Y_\h(a,z)Y_\h(b,w)=\sum_{n\geq0}\frac{(1+\h w)^{n+1}Y_\h(a_{(n,\h)}b,w)}{i_{z,w}(z-w)^{n+1}}+{:}Y_\h(a,z)Y_\h(b,w){:}\, .
    \end{equation}
\end{cor}
\begin{proof}
    Due to Proposition \ref{prop n h prod identity}, we can rewrite \eqref{eq OPE h vertex operators} as
    \begin{equation*}
        Y_\h(a,z)Y_\h(b,w)=\sum_{n\geq0}\frac{Y_\h(a,w)_{(n)}Y_\h(b,w)}{i_{z,w}(z-w)^{n+1}}+{:}Y_\h(a,z)Y_\h(b,w){:}\, ,
    \end{equation*}
    which holds due to locality and Theorem \ref{teo OPE}.
\end{proof}

\begin{oss}\label{oss discrete analog}
    At the level of $(n,\h)$-products, equation \eqref{eq translation covariance 2} becomes
    \begin{equation}
        (\del a)_{(n,\h)}b=-n a_{(n-1,\h)}b-\h(n+1)a_{(n,\h)}b \quad a,b\in V, n\in\ZZ,
    \end{equation}
    that is
    \begin{equation}\label{eq h covariance DSK prods}
        ((\del+\h(n+1)) a)_{(n,\h)}b=-n a_{(n-1,\h)}b \quad a,b\in V, n\in\ZZ.
    \end{equation}
    If $n>0$, we can iterate equation \eqref{eq h covariance DSK prods} to get
    \begin{equation}\label{eq covariance product iterated}
        a_{(-n-1,\h)}b=\frac{(\del)_{n,\h}}{n!}a_{(-1,\h)}b.
    \end{equation}
    Notice that this is a discrete analogue of the formula \eqref{eq:translationcovariancenproducts} for vertex algebras. We substituted $\del^n$ with the falling factorial $(\del)_{n,\h}$, which is the discrete analogue of the monomial (see the discussion in Section \ref{sec:finitedifferences}). This analogy becomes evident with the $\h$-bracket formalism.
\end{oss}

\subsection{\texorpdfstring{$\hbar$}{h}-bracket formalism}\label{sec:hbracket}

In this section, we introduce the $\h$-analogue of the $\l$-bracket. We also define the related notions of an $\h$-vertex Lie algebra and an $\h$-Poisson vertex algebra and study their relation to $\h$-vertex algebras.

\begin{defi}\label{def:hvertexliealgebra}
    An $\h$-vertex Lie algebra is a vector space $R$, together with an endomorphism $\del$, and a bilinear operation, called the $\h$-bracket
$$[\cdot_\l\cdot]_\h:V\otimes V\rightarrow V\otimes\CC[\l], $$
that satisfies the axioms:
\begin{enumerate}
    \item (skewsymmetry) $[a_{\l} b]_\h=-[b_{-\l-\del-2\h}\,a]_\h$;
    \item (sesquilinearity) $[\del a_\l b]_\h=-(\l+\h)[a_\l b], \ \ [a_\l \del b]_\h=(\l+\del+\h)[a_\l b]_\h$;
    \item (Jacobi identity) $[[a_\l b]_{\h \ \l+\mu+\h \ }c]_\h=[a_\l[b_\mu c]_\h\,]_\h-[b_\mu [a_\l c]_\h\,]_\h$.
\end{enumerate}
\end{defi}

Let $V$ be an $\h$-vertex algebra. Define a bracket $[\cdot_\l\cdot]_\h:V\otimes V\rightarrow V\otimes\CC[\l] $ as
\begin{equation}\label{def h bracket}
    \br{a}{b}:=\Res_z((1+\h z)^{\l/\h}Y_\h(a,z)b)=\sum_{n\geq0}\frac{(\l)_{n,\h}}{n!}a_{(n,\h)}b.
\end{equation} 

\begin{oss}
    The bracket defined in \eqref{def h bracket} can be thought of as a discrete generalization of the usual $\l$-bracket. In fact, we changed $\l^n$ for the falling factorial $(\l)_{n,\h}$ (see Remark \ref{oss discrete analog}).
\end{oss}

\begin{prop}\label{prop:l+hbracket}
    Let $R$ be a vector space, with an endomorphism $\del$. Then $(R,\del, \brl{\cdot}{\cdot})$ is a vertex Lie algebra if and only if $(R,\del, \brl[\l+\h]{\cdot}{\cdot})$ is an $\h$-vertex Lie algebra.
\end{prop}
\begin{proof}
    A direct check shows that $\brl[\l+\h]{\cdot}{\cdot}$ satisfies the axioms of Definition \ref{def:hvertexliealgebra} if and only if $\brl{\cdot}{\cdot}$ is a $\l$-bracket.
\end{proof}

\begin{theorem}\label{teo h va h lca structure}
    Let $V$ be an $\h$-vertex algebra, then the bracket defined as in \eqref{def h bracket}, together with $\del$, induces the structure of an $\h$-vertex Lie algebra on $V$.
\end{theorem}
\begin{proof}
    This follows from Propositions \ref{prop:changeofvariable} and \ref{prop:l+hbracket}. If we write $Y_\h(a,z)=Y\left(a,\frac{1}{\h}\log(1+\h z) \right) $, formula \eqref{def h bracket} becomes
    \begin{equation}
        \br{a}{b}:=\Res_z(1+\h z)^{\l/\h}Y\left(a,x(z) \right)b=\Res_x e^{x (\l+\h)}Y(a,x)=\brl[\l+\h]{a}{b},
    \end{equation}
    where $\brl{a}{b}$ denotes the $\l$-bracket associated to the vertex operators $Y(a,z)$. In particular, the $\h$-bracket defined in \eqref{def h bracket} gives an $\h$-vertex Lie algebra structure. 

    We can also prove this directly from the axioms of an $\h$-vertex algebra. For completeness, we write here the proof, which is similar to the one for the $\l$-bracket of a vertex algebra. To prove the skewsymmetry relation, apply $\Res (1+\h z)^{\l/\h}$ to both sides of equation \eqref{eq skewsymmetry}:
    \begin{align*}
        \br{a}{b}&=\Res_z(1+\h z)^{(\l+\del)/\h} Y_\h\left(b,-\frac{z}{1+\h z}\right )a =\sum_{k\geq 0}(-1)^{k+1}\frac{(\l+\del +\h(k+1))_{k,\h}}{k!}b_{(k,\h)a} \\
        &=-\sum_{k\geq 0}\frac{(-\l-\del -2\h)_{k,\h}}{k!}b_{(k,\h)a} =-\br[-\l-\del-2\h]{a}{b},
    \end{align*}
    where we used formula \eqref{eq (-1)^n Pochhammer symbol}.
    
    The first part of sesquilinearity follows from applying $\Res (1+\h z)^{\l/\h}$ to both sides of \eqref{eq translation covariance 2}, and by integrating by parts when computing the residue on the right-hand side. The second part of sesquilinearity follows from the fact that $\del$ is a derivation of all $(n,\h)$-products.

    Finally, let us prove the Jacobi identity. Take equation \eqref{eq h commutator formula}, apply both sides to $c\in V$, then apply $\Res_z (1+\h z)^{\l/\h}$ to get:
    \begin{equation}\label{eq proof h va h lca structure 1}
        \begin{aligned}
            \Res_z(1+\h z)^{\l/\h}[Y_\h(a,z),Y_\h(b,w)]c=&\\
            =\Res_z(1+\h z)^{\l/\h}&\sum_{n\geq0}(1+\h w)^{n+1}Y_\h(a_{(n,\h)}b,w)\del^{n}_w\delta(z-w)c/n!.
        \end{aligned}
    \end{equation}
    To compute the right-hand side of \eqref{eq proof h va h lca structure 1}, notice that, because of equation \eqref{eq:derivativedelta},
    \begin{align*}
        &\Res_z(1+\h z)^{\l/\h}\del_w^{n}\delta(z-w)/n!=\Res_z(1+\h z)^{\l/\h}\sum_{k\in\ZZ}\binom{k}{n}w^{k-n}z^{-k-1}\\ 
        &=\sum_{k\geq 0}\frac{(\l)_{k+n,\h}}{k!n!}w^{k} =\frac{(\l)_{n,\h}}{n!}\sum_{k\geq 0}\frac{(\l-n\h)_{k,\h}}{k!}w^{k} =\frac{(\l)_{n,\h}}{n!}(1+\h w)^{\l/\h-n}.
    \end{align*}
    Using this, we rewrite \eqref{eq proof h va h lca structure 1} as
    \begin{equation}\label{eq proof Jacobi 1 step}
        \br{a}{Y_\h(b,w)c}-Y_\h(b,w)\br{a}{c}=(1+\h w)^{\l/\h+1}Y_\h(\br{a}{b},w)c.
    \end{equation}
    The Jacobi identity follows by further applying $\Res_w(1+\h w)^{\mu/\h}$ to both sides of \eqref{eq proof Jacobi 1 step}.
    
\end{proof}

\begin{oss}
    Let $(V,Y)$ be a vertex algebra and $(V_\h,Y_\h)$ its associated $\h$-vertex algebra. Notice that the expression of the $\h$-bracket of $V_\h$ in terms of the $\l$-bracket of $V$ is relatively easy, as it involves a simple translation $\l\mapsto \l+\h$. However, the indexing of the products in the $\h$-bracket is done in terms of the falling factorials, not of powers. This means that we cannot immediately relate the non-negative $(n)$-products of $V$ with the non-negative $(n,\h)$-products of $V_\h$, as we first need to rearrange all the terms in $\l$. This combinatorial problem involves the Stirling numbers, whose generating series is closely related to the change of variable $z=\frac1\h\log(1+\h x)$ . 
\end{oss}

\begin{oss}
    Let $V$ be a VOA and consider the associated $\h$-vertex algebra structure induced by the conformal grading: $Y_\h(a,z)=(1+\h z)^{\Delta_a}Y(a,z)$. In this case, formula \eqref{def h bracket} becomes (for homogeneous $a\in V$):
    \begin{equation}\label{eq h bracket deformed vertex operator case}
        \br{a}{b}:=\Res_z(1+\h z)^{\l/\h+\Delta_a}Y(a,z)b=\sum_{n\geq0}\frac{(\l+\h\Delta_a)_{n,\h}}{n!}a_{(n)}b.
    \end{equation}
\end{oss}

To ease the notation, let us denote the $(-1,\h)$-product by $\ast_\h$, that is
\begin{equation*}
    a\ast_\h b:=a_{(-1,\h)}b, \quad \forall a,b\in V.
\end{equation*}

\begin{theorem}\label{teo relazion h bracket h -1 prod}
    Let $V$ be an $\h$-vertex algebra. The $\ast_\h$ product satisfies the following quasi-commutativity and quasi-associativity formulae, expressed in terms of the $\h$-bracket for all $a,b,c\in V$:
    \begin{equation}\label{eq commutator -1 h prod}
        a\ast_\h b-b\ast_\h a=\sum\nolimits_{-\del-\h}^0\br{a}{b}\delta\l,
    \end{equation}
    \begin{equation}\label{eq associator -1 h prod}
        (a\ast_\h b)\ast_\h c-a\ast_\h(b\ast_\h c)=\left( \sum\nolimits_0^{\del} \delta \l \, b \right)\ast_\h \br{a}{c}+ \left( \sum\nolimits_0^{\del} \delta \l \, a \right)\ast_\h \br{b}{c}.    
    \end{equation}
    They also satisfy the following $\h$-analogue of the right and left non-commutative Wick formulae:
    \begin{equation}\label{eq right wick}
        \br{a}{b\ast_\h c}=b\ast_\h \br{a}{c}+\br{a}{b}\ast_\h c+\sum\nolimits_0^{\l+\h}\br[\h \ \mu]{[a_\l b]}{c}\delta\mu,
    \end{equation}
    \begin{equation}\label{eq left wick}
        \br{a\ast_\h b}{c}=a\ast_\h \br[\l+\del^{(1)}]{b}{c}+b\ast_\h \br[\l+\del^{(1)}]{a}{c}+\sum\nolimits_0^\l \br[\mu]{b}{\br[\l-\mu-\h]{a}{c}} \delta \mu. 
    \end{equation}
\end{theorem}
    
\begin{proof}
    Equation \eqref{eq skewsymmetry products} for $n=-1$ and the role of $a$ and $b$ swapped becomes:
    \begin{align*}
        b\ast_\h a&=-\sum_{k\geq0}(-1)^{k+1}\frac{(\del+\h k)_{k,\h}}{k!}a_{(k-1,\h)}b=a\ast_\h b + \sum_{k\geq 0}\frac{(-\del-\h)_{k+1,\h}}{(k+1)!}a_{(k,\h)}b,
    \end{align*}
    thus proving \eqref{eq commutator -1 h prod}. Apply the $(-1,\h)$-product identity \eqref{eq n h prod identity} to $c\in V$ and take the constant term:
    \begin{align*}
        (a\ast_\h b)\ast_\h c&=\Res_z z^{-1} Y_\h(a,z)_{(-1)}Y_\h(b,z)c. 
    \end{align*}
    Now, using formula \eqref{def normally ordered product fields} and equation \eqref{eq covariance product iterated}:
    \begin{align*}
        \Res_z z^{-1} Y_\h(a,z)_{(-1)}Y_\h(b,z)c&=\Res_z z^{-1}(Y_\h(a,z)_+Y_\h(b,z)c+Y_\h(b,z)Y_\h(a,z)_-c) \\
        &=\sum_{k\geq-1}a_{(-k-2,\h)}(b_{(k,\h)}c)+\sum_{k\geq 0}b_{(-k-2,\h)}(a_{(k,\h)}c)\\
        &=a\ast_\h(b\ast_\h c)+\sum_{k\geq 0}\left(\frac{(\del)_{k+1,\h}}{(k+1)!}a \right)\ast_\h(b_{(k,\h)c})\\
        & \quad+\sum_{k\geq 0}\left(\frac{(\del)_{k+1,\h}}{(k+1)!}b \right)\ast_\h(a_{(k,\h)c}).
    \end{align*}
    This proves \eqref{eq associator -1 h prod}. To prove \eqref{eq right wick}, equate the constant term in $w$ in both sides of \eqref{eq proof Jacobi 1 step}, getting:
    \begin{align*}
        \br {a}{b\ast_\h c}&=b\ast_\h \br {a}{c}+\sum_{k\geq -1}\frac{(\l+\h)_{k+1,\h}}{(k+1)!}(\br {a}{b})_{(k,\h)}c\\
        &=b\ast_\h \br{a}{c}+\br{a}{b}\ast_\h c+\sum\nolimits_0^{\l+\h}\br[\h \ \mu]{[a_\l b]}{c}\delta\mu.
    \end{align*}
    Finally, the left non-commutative Wick formula \eqref{eq left wick} can be derived from \eqref{eq right wick}. Using the skewsymmetry of the $\h$-bracket, write 
    $$\br {a\ast_\h b}{c}=-\br [-\l-\del-2\h \,]{c}{a\ast_\h b}. $$
    Then using \eqref{eq right wick}, this is equal to
    \begin{align*}
        -a\ast_\h \br[-\l-\del-2\h \,]{c}{b}-\br[-\l-\del-2\h \,]{c}{a}\ast_\h b-\sum\nolimits_0^{-\l-\del-\h}\br[\h \ \mu]{[c_{-\l-\del-2\h \,} a]}{b}\delta\mu,
    \end{align*}
    where each of the $\del$ is applied to the result of the $(n,\h)$-products between $a,b,c$. Since $\del$ is a derivation of the $(n,\h)$-products, we can write $\del=\del^{(1)}+\del^{(2)}$, acting respectively on the left and right factor, i.e. $\del(a\ast_\h b)=(\del^{(1)} a)\ast_\h b+a\ast_\h (\del^{(2)} b)$. Applying skewsymmetry again, we get:
    \begin{align*}
        a\ast_\h \br[\l+\del^{(1)} \,]{b}{c}+\br[\l+\del^{(2)} \,]{a}{c}\ast_\h b+\sum\nolimits_0^{-\l-\del-\h}\br[\h \ \mu]{[a_{\l+\del^{(2)} \,} c]}{b}\delta\mu.
    \end{align*}
    Now, by the commutator formula \eqref{eq commutator -1 h prod} on $\br[\l+\del^{(2)} \,]{a}{c}\ast_\h b$ and by adding and reversing the extremes of summation \eqref{eq addition extremes definite sum}, \eqref{eq inversion extremes definite sum}:
    $$ a\ast_\h \br[\l+\del^{(1)} \,]{b}{c}+b\ast_\h\br[\l+\del^{(1)} \,]{a}{c}-\sum\nolimits_{-\l-\del-\h}^{-\del-\h}\br[\h \ \mu]{[a_{\l+\del^{(2)} \,} c]}{b}\delta\mu. $$
    Finally, using skewsymmetry, Proposition \ref{prop change of variable} for the change of variables, and the first sesquilinearity relation:
    \begin{align*}
        -\sum\nolimits_{-\l-\del-\h}^{-\del-\h}\br[\h \ \mu]{[a_{\l+\del^{(2)} \,} c]}{b}\delta\mu&=\sum\nolimits_{-\l-\del-\h}^{-\del-\h}\br[-\mu-\del-2\h]{b}{[a_{\l+\del^{(1)} \,} c]_\h}\delta\mu \\
        &=\sum\nolimits_{0}^{\l}\br[\mu]{b}{[a_{\l+\del^{(1)} \,} c]_\h}\delta\mu\\
        &=\sum\nolimits_{0}^{\l}\br[\mu]{b}{[a_{\l-\mu-\h \,} c]_\h}\delta\mu.
    \end{align*}
    Putting everything together completes the proof.
\end{proof}

\begin{oss}
    The quasi-associativity formula \eqref{eq associator -1 h prod} is symmetric in $b$ and $c$. An algebra with this property is called left symmetric or pre-Lie (see \cite{burdeLeftsymmetricAlgebrasPreLie2006}). Its commutator is always a Lie bracket, and the following formula holds:
    \begin{equation}\label{eq left symmetric associator}
        a\ast_\h(b\ast_\h c)=b\ast_\h (a\ast_\h c)+(a\ast_\h b-b\ast_\h a)\ast_\h c.
    \end{equation}
    In particular, an $\h$-vertex Lie algebra $R$ has a natural Lie algebra structure, with bracket
    \begin{equation}
        [a,b]:=\sum\nolimits^0_{-\del-\h}\br{a}{b}\delta\l,\quad\forall a,b\in R.
    \end{equation}
\end{oss}

\begin{oss}
    In \cite{desoleFiniteVsAffine2006} the authors consider another bracket, which they also call $\h$-bracket, defined as
    \begin{equation}\label{def DSK h bracket}
        [a,b]_\h:=\Res_z(1+\h z)^{-1}Y_\h(a,z)b=\sum_{n\geq0}(-\h)^ja_{(j,\h)}b.
    \end{equation}
    It is clear from \eqref{def h bracket} that $[a,b]_\h=\restr{[a_{\l\,}b]_\h}{\l=-\h}$. With this in mind, the formulae in \cite{desoleFiniteVsAffine2006} involving the $\h$-bracket become a specialization of ours for $\l=-\h$. 
\end{oss}

\begin{defi}
    An $\h$-Poisson vertex algebra is a quadruple $(\Vv,1,\cdot,\del,\{\cdot_\l\cdot\}_\h)$ where
    \begin{enumerate}[(i)]
        \item $(\Vv,1,\cdot,\del)$ is a unital, commutative, differential algebra;
        \item $(\Vv,\del,\{\cdot_\l\cdot\}_\h)$ is an $\h$-vertex Lie algebra;
        \item the $\h$-bracket and the commutative product satisfy the right Leibniz rule, for all $a,b,c\in\Vv$:
        \begin{equation}\label{eq right Leibniz rule}
            \pbr{a}{bc}=b\pbr{a}{c}+c\pbr{a}{b}.
        \end{equation}
    \end{enumerate}
\end{defi}

\begin{oss}
    By skewsymmetry of the $\h$-bracket, an $\h$-Poisson vertex algebra satisfies the following left Leibniz rule:
    \begin{equation}\label{eq left Leibniz rule}
        \pbr{ab}{c}=\{b_{\l+\del} c\underrightarrow{\}_\h}a+\{a_{\l+\del} c\underrightarrow{\}_\h}b.
    \end{equation}
    Here we used the same arrow notation introduced for Poisson vertex algebras (see Definition \ref{def:poissonvertexalgebra}).
\end{oss}

\begin{prop}\label{prop:hpvachangeofvariable}
    Let $(\Vv,1,\cdot,\del)$ be a unital, commutative, differential algebra. Then $(\Vv,1,\del,\cdot, \pbrl{\cdot}{\cdot})$ is a Poisson vertex algebra if and only if $(\Vv,1,\del,\cdot,  \pbrl[\l+\h]{\cdot}{\cdot})$ is an $\h$-Poisson vertex algebra.
\end{prop}
\begin{proof}
    From Proposition \ref{prop:l+hbracket} we know that $\pbrl[\l+\h]{\cdot}{\cdot}$ defines an $\h$-vertex Lie algebra structure. Clearly, it still satisfies the right Leibniz rule. The other implication follows in the same way.
\end{proof}

\begin{oss}\label{oss:hpvaconformal}
    If a Poisson vertex algebra $\Vv$ admits a conformal grading, then equation \eqref{eq h bracket deformed vertex operator case} provides another way to construct an $\h$-Poisson vertex algebra, isomorphic to the one defined in Proposition \ref{prop:hpvachangeofvariable}.
\end{oss}

As with the case of ordinary vertex algebras, the semi-classical limit of an $\h$-vertex algebra is an $\h$-Poisson vertex algebra. 

\begin{defi}
    A {\em good filtration} on an $\h$-vertex algebra $V$ is a filtration $V=\bigcup_{n\geq0} F_n V$ such that:
    \begin{enumerate}
        \item $\del F_nV\subset F_nV$;
        \item $(F_nV)\ast_\h(F_mV)\subset F_{n+m}V $;
        \item $\br{F_nV}{F_mV}\subset F_{n+m-1}V[\l] $, where $\l$ is placed in degree $0$.
    \end{enumerate}
\end{defi}

\begin{prop}
    Let $V$ be an $\h$-vertex algebra equipped with a good filtration. Then $\gr(V)$ has a natural structure of $\h$-Poisson vertex algebra.
\end{prop}

\begin{oss}
    A good filtration for a vertex algebra $V$ is also a good filtration for the corresponding $\h$-vertex algebra given by the change of variables. This is because we can always write
    $$a_{(k,\h)}b=\sum_{n\geq k}c_n \h^n a_{(n)}b, \quad \forall a,b\in V, $$
    for some coefficients $c_n\in\CC$. In particular, this implies that the $(-1)$ and $(-1,\h)$ products coincide on the associated graded $\gr(V)$.
\end{oss}

\subsubsection{The sum \texorpdfstring{$\hbar$}{h}-bracket}\label{sec:sumhbraket}

We introduce an $\h$-version of the integral $\l$-bracket discussed in Section \ref{sec:integrallbracket}. Following the analogy between infinitesimal and discrete calculus, the integral is substituted by a definite sum and derivatives by finite difference operators. We refer to Appendix \ref{sec:finitedifferences} for the relevant definitions and key properties. The sum $\h$-bracket formalism allows us to pack together different axioms for the $\h$-bracket and the $\ast_\h$ product. Let $V$ be an $\h$-vertex algebra, and define a bilinear map $I_{\lambda,\h}:V\otimes V\to V[\lambda]$ as 
\begin{equation}\label{def sum h bracket}
    I_{\l,\h}(a,b):=a\ast_\h b+\sum\nolimits_0^\l [a_x b]_\h \delta x.
\end{equation}
We call it the sum $\h$-bracket.

\begin{prop}\label{prop sum h bracket}
    The sum $\h$-bracket satisfies the following axioms:
    \begin{enumerate}[(a)]
\item
Unity:
\begin{equation}\label{eq vacuum sum h}
I_{\lambda,\h}(|0\rangle,a)=I_{\lambda,\h}(a,|0\rangle)=a\,, \quad \forall a\in V.
\end{equation}

\item Sesquilinearity:
\begin{equation}\label{eq sesquilinearity 1 sum h}
\Delta_\h I_{\lambda,\h}(\del a,b)=-(\lambda+\h) \Delta_\h I_{\lambda,\h}(a,b)\,, \quad \del\left(I_{\lambda,\h}(a,b)\right)=I_{\lambda,\h}(\del a,b)+I_{\lambda,\h}(a,\del b)\,, \quad \forall a,b\in V,
\end{equation}
where $\Delta_\h$ is the finite difference operator (see \ref{def:finitedifference}).
\item
Skewsymmetry:
\begin{equation}\label{eq skewsymmetry sum h}
I_{\lambda,\h}(a,b)=I_{-\lambda-\del-\h,\h}(b,a)\,,\quad \forall a,b\in V.
\end{equation}
\item
Jacobi identity:
\begin{equation}\label{eq Jacobi sum h}
I_{\lambda,\h}(a,I_{\mu,\h}(b,c))-I_{\mu,\h}(b,I_{\lambda,\h}(a,c))
=I_{\lambda+\mu,\h}(I_{\lambda,\h}(a,b)-I_{-\mu-\del-\h,\h}(a,b),c)\,,
\end{equation}
for every $a,b,c\in V$.
\end{enumerate}
\end{prop}
\begin{proof}
    The unity axiom follows from definition. Sesquilinearity formulae follow from definition and from the fact that $\del$ is a derivation of all the $(n,\h)$-products. Skewsymmetry follows from a direct computation:
    \begin{align*}
        I_{-\l-\del-\h,\h}(b,a)&=
        b\ast_\h a+\sum\nolimits_0^{-\l-\del-\h}\br[x]{b}{a}\delta x =a\ast_\h b+\sum\nolimits_{-\del-\h}^{-\l-\del-\h}\br[x]{b}{a}\delta x \\
        &=a\ast_\h b-\sum\nolimits_{-\del-\h}^{-\l-\del-\h}\br[-x-\del-2\h]{a}{b}\delta x =a\ast_\h b +\sum\nolimits_{0}^{\l}\br[x]{a}{b}\delta x,
    \end{align*}
    where we used the commutator formula for $\ast_\h$ \eqref{eq commutator -1 h prod}, skewsymmetry of the $\h$-bracket and Proposition \ref{prop change of variable} for the change of variables.

    Recall that if two functions have the same finite difference, they coincide modulo an $\h$-periodic function. In the case of the Jacobi identity, both sides are polynomial functions, and the only periodic polynomial functions are the constants. Notice also that the Jacobi identity remains unchanged if we swap $a,b$ and $\l,\mu$. Thus, we only need to check that identity \eqref{eq Jacobi sum h} holds when we put $\l=\mu=0$, when we put $\mu=0$ and compute the finite difference with respect to $\l$, and finally when we compute the finite difference with respect to both $\l$ and $\mu$. If $\l=\mu=0$ we get 
    $$a\ast_\h(b\ast_\h c)-b\ast_\h(a\ast_\h c)=-\left(\sum\nolimits_0^{-\del-\h}\br[x]{a}{b}\delta x \right) \ast_\h c, $$
    which is true due to equation \eqref{eq left symmetric associator}.
    Put now $\mu=0$ in \eqref{eq Jacobi sum h} and compute the finite difference with respect to $\l$:
    $$\br{a}{b\ast_\h c}=b\ast_\h \br{a}{c}+\br{a}{b}\ast_\h c+\Delta_\h\left [\sum\nolimits_0^\l\delta y\sum\nolimits_y^\l \delta x \br[\h \, y]{[a_x b]}{c}\right ]. $$
    To compute the finite difference in the right-hand side above, we change the order of summation, using Proposition \ref{prop change order summation}, so
    $$\Delta_\h\left [\sum\nolimits_0^\l\delta y\sum\nolimits_y^\l \delta x \br[\h \, y]{[a_x b]}{c}\right ]=\Delta_\h\left[\sum\nolimits_0^\l\delta x\sum\nolimits_0^{x+\h}\delta y\br[\h \, y]{[a_x b]}{c}\right]=\sum\nolimits_0^{\l+\h}\br[\h \, \mu]{[a_\l b]}{c}\delta\mu, $$
    recovering the right Wick formula \eqref{eq right wick}.

    Now compute the finite difference of \eqref{eq Jacobi sum h} with respect to both $\l$ and $\mu$. The left-hand side becomes $\br{a}{\br[\mu]{b}{c}}-\br[\mu]{b}{\br{a}{c}} $. The right-hand side becomes
    $$\Delta_\h^{(\mu)}\Delta_\h^{(\l)}\sum\nolimits_0^{\l+\mu}\delta y\sum\nolimits_{-\mu+y}^\l\delta x \br[\h \, y]{[a_x b]}{c}. $$
    Changing the order of summation gives
    $$\Delta_\h^{(\mu)}\Delta_\h^{(\l)}\sum\nolimits_{-\mu}^\l\delta x\sum\nolimits_0^{x+\mu+\h}\delta y \br[\h \, y]{[a_x b]}{c}=\br[\h \, \l+\mu+\h]{[a_\l b]}{c},$$
    recovering the Jacobi identity of the $\h$-bracket.
\end{proof}
The following result will be useful for computations in Section \ref{section 4}. 

\begin{lemma}\label{lemma recursion integral h bracket}
    Let $V$ be an $\h$-vertex algebra. Then the sum $\h$-bracket $I_{\l,\h} $ has the following recursion properties:
\begin{align}
    & I_{\l,\h}(\z,A)=A, \label{eq recursion base case left}\\
    & I_{\l,\h}(a\ast_\h B,A)=a\ast_\h I_{\l+\del^{(1)}}(B,A)+I_{\l,\h}\left(B,\sum\nolimits_0^{\l+\del^{(1)}}{[a_x A]_\h\delta x}\right), \label{eq recursion left}
\end{align}
for all $a,A,B\in V$, and
\begin{align}
    &I_{\l,\h}(A,\z)=A, \label{eq recursion base case right}\\
    &I_{\l,\h}(A,a\ast_\h B)=a\ast_\h I_{\l,\h}(A,B)+I_{\l,\h}\left(\sum\nolimits_{-\del-\h}^\l{[A_x a]_\h\delta x},B\right), \label{eq recursion right}
    \end{align}
for all $a,A,B\in V$.
    
\end{lemma}
\begin{proof}
    The two base cases \eqref{eq recursion base case left} and \eqref{eq recursion base case right} follow from definition.

    Notice that $a\ast_\h B=I_{0,\h}(a,B)$. Thus we have, using the skewsymmetry and Jacobi identity:
    \begin{align*}
        I_{\l,\h}(a\ast_\h B,A)&=I_{\l,\h}(I_{0,\h}(a,B),A) =I_{-\l-\del-\h}(A,I_{0,\h}(a,B)) \\
        &=I_{0,\h}(a,I_{-\l-\del^{(1)}-\del^{(2)}-\h}(A,B)) +I_{-\l-\del-\h,\h}(I_{-\l-\del^{(1)}-\del^{(2)}-\h,\h}(A,a)-I_{-\del^{(1)}-\h,\h}(A,a),B).
    \end{align*}
    Using skewsymmetry again we get
    \begin{equation*}
        I_{0,\h}(a,I_{-\l-\del^{(1)}-\del^{(2)}-\h}(A,B))=a\ast_\h I_{\l+\del^{(1)}}(B,A),
    \end{equation*}
    and
    \begin{align*}
        &I_{-\l-\del-\h,\h}(I_{-\l-\del^{(1)}-\del^{(2)}-\h,\h}(A,a)-I_{-\del^{(1)}-\h,\h}(A,a),B) \\
        &=I_{\l,\h}(B,I_{\l+\del^{(1)},\h}(a,A)-I_{0,\h}(a,A)) \\
        &=I_{\l,\h}\left(B,\sum\nolimits_0^{\l+\del^{(1)}}{[a_x A]_\h\delta x}\right).
    \end{align*}
    Equation \eqref{eq recursion right} follows from one application of the Jacobi identity:
    \begin{align*}
        I_{\l,\h}(A,I_{0,\h}(a,B))&=I_{0,\h}(a,I_{\l,\h}(A,B))+I_{\l,\h}(I_{\l,\h}(A,a)-I_{-\del-\h,\h}(A,a),B) \\
        &=a\ast_\h I_{\l,\h}(A,B)+I_{\l,\h}\left(\sum\nolimits_{-\del-\h}^\l{[A_x a]_\h\delta x},B\right).
    \end{align*}
\end{proof}

\subsection{The Zhu algebra}\label{sec:zhuhvertex}

    In this section, we construct an associative algebra associated to an $\h$-vertex algebra, a Poisson algebra associated to an $\h$-Poisson vertex algebra, and a Lie algebra associated to an $\h$-vertex Lie algebra. These algebras are exactly the Zhu algebras of the corresponding vertex algebra, Poisson vertex algebra, and vertex Lie algebra. The $\h$-vertex algebra formalism we developed in the previous sections makes the construction and proofs more natural than the usual ones, which require lengthy computations. 
    
    Let $V$ be an $\h$-vertex algebra and define $J_\h$ as the vector space $V_{(-2,\h)}V$. Equivalently, $J_\h$ can be defined as $(\del V)\ast_\h V$, by formula \eqref{eq h covariance DSK prods}. For the rest of the section, we use the notation $\br[-\h]{\cdot}{\cdot}:=\restr{\br{\cdot}{\cdot}}{\l=-\h}$.

    \begin{lemma}\label{lemma J_h ideal}
        The space $J_\h$ is a two-sided ideal with respect to both the $\ast_\h$ product and the bracket $\br[-\h]{\cdot}{\cdot}$. Moreover, it is preserved by the action of $\del$.
    \end{lemma}
    \begin{proof}
        From the quasi-associativity formula \eqref{eq associator -1 h prod}, it follows that $(\del V)\ast_\h V$ is a right ideal. Let now $a,b,c\in V$. Using equation \eqref{eq left symmetric associator}:
        $$a\ast_\h ((\del b)\ast_\h c)=(\del b)\ast_\h (a\ast_\h c)+(a\ast_\h (\del b)-(\del b)\ast_\h a)\ast_\h c. $$
        Notice that $a\ast_\h (\del b)=\del(a\ast_\h b)-(\del a)\ast_\h b $, so 
        $a\ast_\h ((\del b)\ast_\h c)\subset J_\h\ast_\h V\subset J_\h$. Thus $J_\h$ is a left ideal too. By the right Wick formula \eqref{eq right wick}, 
        \begin{equation*}
            \br[-\h]{a}{(\del b)\ast_\h c}=(\del b)\ast_\h\br[-\h]{a}{c}+\br[-\h]{a}{(\del b)}\ast_\h c,
        \end{equation*}
        and, by sesquilinearity, $\br[-\h]{a}{(\del b)}=\del\br[-\h]{a}{b}$. By skewsymmetry, $\br[-\h]{a}{b}=-\br[-\h]{b}{a} \mod J_\h $, so $J_\h$ is automatically a two-sided $\br[-\h]{\cdot}{\cdot}$-ideal. Since $\del$ is a derivation of the $(-2,\h)$-product, it clearly preserves the ideal $J_\h$.
    \end{proof}

\begin{defi}
    Let $V$ be an $\h$-vertex algebra. Define the $\Zhu_\h$ algebra associated to $V$ as the vector space quotient $\Zhu_\h(V):=V/J_\h$.
\end{defi}

\begin{theorem}\label{teo Zhu h va}
        The vector space $\Zhu_\h(V)$ is a unital associative algebra, with the product induced by the $\ast_\h$ product, and the quotient class of $\z$ as identity. The commutator of $\Zhu_\h(V)$ is induced by $\h\br[-\h]{\cdot}{\cdot} $.     
\end{theorem}
\begin{proof}
    Since $J_\h$ is a two-sided ideal of $\ast_\h$, $\Zhu_\h(V)$ is a well-defined algebra. By the formula for quasi-associativity \eqref{eq associator -1 h prod}, it is clear that $\ast_\h$ is associative in the quotient. Moreover, from the equation for the commutator \eqref{eq commutator -1 h prod}, 
    $$a\ast_\h b-b\ast_\h a\equiv \sum\nolimits_{-\h}^0\br{a}{b}\delta\l \ \ \ mod \ J_\h.$$
    By Theorem \ref{teo definite sum equal sum}, the definite sum is equal to $\h\br[-\h]{a}{b} $.      
\end{proof} 

Let now $V$ be an ordinary vertex algebra. Recall Huang's construction of the Zhu algebra $\Zhu(V)$ (see Definition \ref{def:zhualgebrahuang}). We can now give a new, easy proof of its associativity.

\begin{theorem}\label{thm:zhu}
    The algebra $\Zhu(V)$ is associative. The commutator on $\Zhu(V)$ is induced by $\restr{\brl{\cdot}{\cdot}}{\l=0}$.
\end{theorem}
\begin{proof}
    The proof is now trivial. In fact, the algebra $\Zhu(V)$ is by definition the $\Zhu_\h$ algebra of the $\h$-vertex algebra associated to $V$ as in Proposition \ref{prop:changeofvariable}. This is associative by Theorem \ref{teo Zhu h va}. Moreover, by Proposition \ref{prop:l+hbracket}, $\br{\cdot}{\cdot}=\brl[\l+\h]{\cdot}{\cdot} $, so the commutator on the Zhu algebra is induced by $$\restr{\h\br[-\h]{a}{b}}{\h=1}=\restr{\h\brl[-\h+\h]{a}{b}}{\h=1}=\restr{\brl{a}{b}}{\l=0}.$$
\end{proof}

\begin{oss}
    We could consider instead a VOA $V$ and the Zhu algebra $\Zhu'(V)$ constructed using the conformal structure (see \eqref{eq:defzhu2}). Then $\Zhu'(V)$ is the $\Zhu_\h$ algebra of the $\h$-vertex algebra associated to $V$ using the conformal structure (see Remark \ref{oss:hvaconformal}), after specializing $\h$ to $1$.
\end{oss}

\begin{oss}
    Using the formalism of $\h$-vertex algebras, the proof of Theorem \ref{thm:zhu} becomes more natural (see \cite[Proposition 6.1]{huangDifferentialEquationsDuality2005} for comparison). This is because the Zhu algebra is not directly associated to the vertex algebra, but rather to the $\h$-vertex algebra associated to the vertex algebra. The usual construction has a ``hidden step'':
    \begin{equation}
\begin{tikzcd}
{(V,Y)} \arrow[r, dotted] \arrow[r, dotted] \arrow[rr, "\Zhu", bend left] & {(V,Y_\h)} \arrow[r] & {\Zhu_{\h=1}(V,Y_\h)}
\end{tikzcd}
    \end{equation}
    Now, a vertex algebra can also be seen as an $\h$-vertex algebra, after the limit $\h\rightarrow0$. It is then natural to ask, what is the $\Zhu_{\h\rightarrow0}$ algebra of the $\h\rightarrow0$-vertex algebra $V$? This is simply $$\Zhu_{\h\rightarrow0}(V)=R_V:=V/V_{(-2)}V.$$ 
    The algebra $R_V$ is known as the $C_2$-algebra associated to $V$. It was introduced by Zhu in \cite{zhuModularInvarianceCharacters1996}, where it is used to study the modular invariance of characters of a vertex algebra. Zhu proved that the dimension of $R_V$ (namely, whether it is finite) plays a crucial role. It is shown in \cite{zhuModularInvarianceCharacters1996} that $R_V$ has the structure of a Poisson algebra. This is a special case of Theorem \ref{teo Zhu h va}. In fact, $\Zhu_{\h\rightarrow0}(V)$ is clearly commutative. Moreover, by the relations in Theorem \ref{def va l bracket},
     $\restr{\brl{a}{b}}{\l=0}$
     induces a Poisson bracket on $\Zhu_{\h\rightarrow0}(V)$. 
\end{oss}

\begin{defi}
Let $\Vv$ be an $\h$-Poisson vertex algebra. Define $\Jj_\h$ as the ideal $(\del \Vv)\Vv$. The $\Zhu_\h$ algebra associated to $\Vv$ is the quotient $\Zhu_\h(\Vv):=\Vv/\Jj_\h$.
\end{defi}

\begin{theorem}\label{thm:Poissonzhualgebrahpva}
    The ideal $\Jj_\h$ is a two-sided ideal for $\pbr[-\h]{\cdot}{\cdot} $. The algebra $\Zhu_\h(\Vv)$ has the structure of a Poisson algebra, with Poisson bracket induced by $\pbr[-\h]{\cdot}{\cdot} $.
\end{theorem}

\begin{proof}
    By right Leibniz rule
    \begin{equation*}
        \pbr[-\h]{a}{(\del b)c}=(\del b)\pbr[-\h]{a}{c}+\pbr[-\h]{a}{(\del b)}c, 
    \end{equation*}
    and by sesquilinearity $\pbr[-\h]{a}{(\del b)}=\del(\pbr[-\h]{a}{b})$. Since by skewsymmetry $\pbr[-\h]{a}{b}=-\pbr[-\h]{b}{a} \mod \Jj_\h $, we have that $\Jj_\h$ is a two-sided $\pbr[-\h]{\cdot}{\cdot}$-ideal.

    Specializing $\l=-\h$, the axioms of an $\h$-vertex Lie algebra recover those of a Lie algebra. So $\pbr[-\h]{\cdot}{\cdot} $ is a Lie bracket on $\Zhu_\h(\Vv)$. Since it satisfies the Leibniz rule, it is a Poisson bracket.
\end{proof}

\begin{oss}
    Let $\Vv$ be a Poisson vertex algebra. If $\Vv$ admits a conformal grading, the Poisson Zhu algebra of $\Vv$ was constructed by De Sole and Kac in \cite[Section 6]{desoleFiniteVsAffine2006}. This corresponds to the $\Zhu_\h$ algebra of the $\h$-Poisson vertex algebra associated to $\Vv$ via the conformal structure (see Remark \ref{oss:hpvaconformal}). 

    Theorem \ref{thm:Poissonzhualgebrahpva} provides a construction of the Poisson Zhu algebra that does not require the conformal structure. It corresponds to the construction introduced by Huang for the associative Zhu algebra.    
\end{oss}

\begin{defi}
    Let $\Vv$ be a Poisson vertex algebra. The Zhu algebra of $\Vv$ is
    \begin{equation}\label{eq:defpoissonzhu}
        \Zhu(\Vv):=\Vv/(\del\Vv)\Vv.
    \end{equation}
\end{defi}

\begin{oss}
    The algebra defined in \eqref{eq:defpoissonzhu} is the $\Zhu_{\h}$ algebra of the $\h$-Poisson vertex algebra associated to $\Vv$ as in Proposition \ref{prop:hpvachangeofvariable}, at $\h=1$. Since by Proposition \ref{prop:l+hbracket} we have $\pbr{\cdot}{\cdot}=\pbrl[\l+\h]{\cdot}{\cdot} $, the Poisson bracket on the Zhu algebra is induced by $\restr{\pbrl{\cdot}{\cdot}}{\l=0}$. 
\end{oss}
Finally, let us define the $\Zhu_\h$ algebra of an $\h$-vertex Lie algebra.

\begin{defi}
    Let $R$ be an $\h$-vertex Lie algebra. Define the $\Zhu_\h$ algebra associated to $R$ as the vector space quotient $\Zhu_\h(R):=R/(\del R)$.
\end{defi}

\begin{theorem}
    The vector space $\del R$ is a two-sided ideal for $\br[-\h]{\cdot}{\cdot}$. The algebra $\Zhu_\h(R)$ has the structure of a Lie algebra, with Lie bracket induced by $\br[-\h]{\cdot}{\cdot}$. 
\end{theorem}
\begin{proof}
    This follows from the same computations from the proof of Theorem \ref{teo Zhu h va}. 
\end{proof}

\begin{oss}
    In \cite[Corollary 3.22]{desoleFiniteVsAffine2006}, the authors also introduce the Zhu algebra of a vertex Lie algebra $R$, again assuming the existence of a conformal grading. In their definition, 
    \begin{equation}\label{eq:liezhualgebraDSK}
        \Zhu(R):=R/(\del+L_0)R, \quad [a,b]:=\sum_{n\geq0}\frac{(\Delta_a-1)_{n}}{n!}a_{(n)}b.
    \end{equation}
    This coincides with the $\Zhu_\h$ algebra of the $\h$-vertex Lie algebra associated to $R$ as in Remark \ref{oss:hvaconformal}, with $\h$ specialized to $1$. Again, we can give another equivalent definition that does not rely on the conformal structure. 
\end{oss}

\begin{defi}
    Let $R$ be a vertex Lie algebra. The Zhu algebra of $R$ is 
    \begin{equation}\label{eq:zhuvertexliehuang}
        \Zhu(R):=R/\del R.
    \end{equation}
\end{defi}

\begin{oss}
    The Lie algebra defined in \eqref{eq:zhuvertexliehuang} is the $\Zhu_\h$ algebra of the $\h$-vertex Lie algebra associated to $R$ as in Proposition \ref{prop:l+hbracket}. Again, the Lie bracket is simply induced by $\restr{\brl{\cdot}{\cdot}}{\l=0}$. The Lie algebra \eqref{eq:zhuvertexliehuang} was already well-known (it appears already in the original paper of Borcherds \cite{borcherdsVertexAlgebrasKacMoody1986}). What is new is its interpretation as a Zhu algebra. 
\end{oss}

\subsection{Connection to vertex \texorpdfstring{$F$}{F}-algebras}\label{sec:F}

The translation covariance axiom in the definition of a vertex algebra \ref{def va} is equivalent to the following weak associativity axiom: for all $a,b,c\in V$, there exists $N\in\NN$ such that

\begin{equation}\label{eq:weak}
(z+w)^NY(a,i_{z,w}(z+w))Y(b,w)c=(z+w)^NY(Y(a,z)b,w)c.
\end{equation}
In \cite{liVertexFalgebrasTheir2011}, Li introduced the notion of a vertex $F$-algebra. Here, $F$ is a formal group law, i.e. a power series $F\in\CC[[x,y]]$ such that
\begin{equation}
    F(x,0)=x, \quad F(0,y)=y, \quad F(F(x,y),z)=F(x,F(y,z)).
\end{equation}
Examples of formal group laws are the additive group law $F_a(x,y)=x+y$ and the multiplicative group law $F_m(x,y)=x+y+xy$. A vertex $F$-algebra is a tuple $(V,\z,Y_F)$, where $V$ is a vector space, $\z\in V$ is the (non-zero) vacuum vector, and $Y_F(a,z):=\sum_{n\in\ZZ}a_{(n,F)}z^{-n-1}$ is a state-field correspondence. It satisfies the same axioms of a vertex algebra, with weak associativity \eqref{eq:weak} substituted by an $F$-weak associativity:
\begin{equation}\label{eq:Fweak}                F(z,w)^NY_F(a,i_{z,w}F(z,w))Y_F(b,w)c=F(z,w)^NY_F(Y_F(a,z)b,w)c.
\end{equation}
By this definition, a vertex algebra is an $F_a$-vertex algebra. In \cite[Proposition 3.12]{liVertexFalgebrasTheir2011}, $F$-weak associativity is shown equivalent to $F$-translation covariance 

\begin{equation}
    [\del,Y_F(a,z)]=\frac{1}{f'(z)}\del_zY_F(a,z),
\end{equation}
where $\del a:=a_{(-2,F)}\z$ and $f(z)$ is the \emph{logarithm} of the formal group law $F$. For $F_a$, $f(z)=z$. For $F_m$, $f(z)=\log(1+z)$. It is then clear that a $\h$-vertex algebra is the vertex $F$-algebra associated to the formal group law $F_\h(x,y)=x+y+\h xy$, which has logarithm $f(z)=\frac{1}{\h}\log(1+\h z)$ (see Definition \ref{def h va}). 

It would be interesting to see how many of our results can be generalized to general vertex $F$-algebras. Recently, the interest in $F$-vertex algebras has been renewed by the discovery of connections with enumerative geometry \cite{grossVertexFalgebraStructures2022,upmeierVertexFalgebrasTheir2025}. The further study of vertex $F$-algebras may also be of interest for the theory of vertex algebras. In \cite[Theorem 3.7]{liVertexFalgebrasTheir2011}, Li proved the following (compare with \ref{prop:changeofvariable})

\begin{theorem}\label{thm:teoLi}
    For every formal group law $F$ with $f=\log(F)$, there is an isomorphism between the category of vertex algebras and vertex $F$-algebras, given by
    \begin{equation}
        Y_F(a,z):=Y(a,f(z)).
    \end{equation}
\end{theorem}

From the discussion in Section \ref{sec:zhuhvertex}, additive (i.e. ordinary) and multiplicative (i.e. $\h$)-vertex algebras have a canonically associated algebra, defined over a quotient of $V$. For $F_a$-vertex algebras it is Zhu's $C_2$-algebra. For $F_m$-vertex algebras it is Zhu's associative algebra. But we know that Zhu's associative algebra plays an important role in the representation theory of the corresponding (via the change of variable) vertex algebra. It would then be very interesting to generalize the constructions in Section \ref{sec:zhuhvertex} to general $F$-vertex algebras, as this would generate new interesting algebras associated to a vertex algebra $V$, via the isomorphism of Theorem \ref{thm:teoLi}.
\section{Chiralization of classical star products}\label{section 4}

\subsection{Star-products for Poisson algebras}\label{sec:starproducts}

We begin the section by recalling some basic facts and definitions about star-products for Poisson algebras. We follow the Einstein summation convention of summing over repeated indices in the appropriate range. Let $(\Aa,1,\{\cdot,\cdot\})$ be a unitary Poisson algebra. 

\begin{defi}
    A star-product on $\Aa$ is a bilinear operation
    \begin{equation*}
        \star:\Aa[[\e]]\otimes \Aa[[\e]]\rightarrow \Aa[[\e]]
    \end{equation*}
    defined for $a,b\in \Aa$, and then extended to $\Aa[[\e]]$ by linearity, as 
    \begin{equation}\label{eq def star-product}
        a\star b=\sum_{n\geq0}\e^n B_n(a,b),
    \end{equation}
     where $B_n:\Aa\otimes\Aa\rightarrow\Aa$ are bilinear operators that satisfy the following axioms:
    \begin{enumerate}[(i)]
        \item The product $\star$ is associative and unitary, with unit $1$.
        \item $B_0(a,b)=ab$, i.e. $\star$ is a deformation of the commutative product on $\Aa$.
        \item $B_1(a,b)-B_1(b,a)=\{a,b\}$, i.e. the star commutator $a\star b-b\star a$ is a deformation of the Poisson bracket.
    \end{enumerate}
    The star-product is called {\em strict} if the sum \eqref{eq def star-product} is convergent for some non-zero values of $\e$.
\end{defi}

Assume now that $\Aa$ is a positively graded Poisson algebra
\begin{equation*}
    \Aa=\bigoplus_{n\geq 0}\Aa_n,
\end{equation*}
with $\Aa_0=\CC$ and Poisson bracket of degree $-i$ for some $i\geq 1$. That is, $\{\Aa_n,\Aa_m\}\subset\Aa_{n+m-i}$ for all $n,m\in\NN$. It is natural to require the star-product to be a graded deformation of the product of $\Aa$. Namely, this means requiring that
\begin{equation}\label{eq:starproductgraded}
    B_k:\Aa_n\otimes\Aa_m\longrightarrow \Aa_{n+m-ki} \hspace{10mm} \forall n,m,k\in\NN.
\end{equation}
Notice that, for graded star-products, the sum in \eqref{eq def star-product} is always finite for degree reasons. So the star-product is strict and we can specialize $\e$ to $1$.

\begin{oss}
    The convention of specializing $\e$ to $1$ instead of considering it a formal parameter, as done in this paper, is not the most common. However, once we assume the grading conditions \eqref{eq:starproductgraded}, this becomes a natural choice. In fact, there is no issue with the convergence of the series, and the various quantization terms can be identified by the degree instead of by the power of $\e$. To go back, one can use a procedure similar to the Rees algebra:
    $$a\star b:=\sum_{n\geq0}\e^{n}\pi_{\deg a+\deg b-n}(a\star b), $$
    where $\pi_i$ is the projection to the $i$-th degree component.
    For more references on the use of this convention, see the papers  \cite{etingofShortStarproductsFiltered2020,etingofTwistedTracesPositive2021}.
\end{oss}

Let $A=\bigcup_{n\geq0}F_nA$ be a filtered associative algebra with $F_0A=\CC$. We assume that the commutator is of negative degree, that is $[F_nA,F_mA]\subset F_{n+m-i}A$. Then, the associated graded
$$\gr(A)=\bigoplus_{n\geq0}F_nA/F_{n-1}A,$$
has a natural structure of a graded Poisson algebra.

\begin{defi}
    Let $A$ be a filtered, associative algebra. We say that $A$ is a quantization of $\Aa$ if $ \gr A\cong \Aa $ as graded Poisson algebras.
\end{defi}

\begin{defi}
    Let $U=\bigcup_{n\geq0}F_nU$ be a filtered vector space, and let $\Uu:=\gr(U)$. A quantization map is a vector space isomorphism $\phi:\Uu\rightarrow U$, such that, for all $n\in\NN$, $\phi(\Uu_n)\subset F_nU$ and $\pi_n\circ\phi=\id$, where $\pi_n$ is the quotient projection $  \pi_n:F_nU\rightarrow F_nU/F_{n-1}U\cong \Uu_n$.   
\end{defi}

\begin{oss}\label{oss:gradedstarproducts}
    Let $A=\bigcup_{n\geq0} F_nA$ be a filtered associative algebra with commutator of degree $-1$ and take $\Aa:=\gr(A)$. 
    If we have a quantization map $\phi:\Aa\rightarrow A$, then we can define a graded star-product as
    \begin{equation}\label{eq star-product from quant map}
        a\star_\phi b:=\phi^{-1}(\phi(a)\phi(b)).
    \end{equation}
    In fact, $\star_\phi$ is associative by definition and it is unital because $\phi(1)=1$. Let $a\in A_n$ and $b\in A_m$. Decomposing the star-product into its graded components gives
    $$a\star_\phi b=C_0(a,b)+C_1(a,b)+\dots, $$
    with $C_i(a,b)=p_{n+m-i}(a\star_\phi b)$, where $p_n$ is the projection to the $n$-th graded component.\\    
    Since $\phi$ is a quantization map, $a=\phi(a)+F_{n-1}A$ and $b=\phi(b)+F_{m-1}A$. Hence,
    $$C_0(a,b)=p_{n+m}(a\star_\phi b)=\pi_{n+m}(\phi(a)\phi(b))=\phi(a)\phi(b)+F_{n+m-1}A=ab. $$
    Similarly,
    $$C_1(a,b)-C_1(b,a)=p_{n+m-1}(a\star_\phi b-b\star_\phi a)=[\phi(a),\phi(b)]+F_{n+m-2}=\{a,b\}.  $$
    Indeed, all graded star-products can be realized this way. If $\star$ is a graded star-product, take $A:=(\Aa,\star)$. The identity function is a quantization map $\Aa\rightarrow A$ and $\star_{\id}=\star$.
\end{oss}

\begin{oss}
    The construction in Remark \ref{oss:gradedstarproducts} can be generalized to filtered associative algebras of arbitrary negative degrees. In those cases, we need to require some additional conditions, related to the grading, on the algebra $A$ and the quantization map $\phi$ (see \cite{etingofShortStarproductsFiltered2020}). We restrict to the degree $-1$ case for simplicity, since all our examples can be reduced to this case.
\end{oss}

\begin{es}[Moyal-Weyl star-product]\label{es Moyal Weyl}
    Let $U$ be a symplectic vector space, with symplectic form $\omega$. Let $\Ss(U)$ be the symmetric algebra, which is canonically a Poisson algebra with Poisson structure induced by $\omega$. The Weyl algebra 
    $$W(U)=\ \quotient{\Tt(U)}{\langle u\otimes v-v\otimes u-\o(u,v) \, | \, \forall u,v\in U\rangle}$$
    is well-known to be a quantization of $\Ss(U)$. Let $v_1,\dots,v_n$ be a basis of $U$. The map $\phi:\Ss(U)\rightarrow W(U)$, given by symmetrization,
     \begin{equation}\label{eq symmetrization map}
         v_{i_1}\cdots v_{i_k}\mapsto \frac{1}{k!}\sum_{\sigma\in S_k}v_{i_{\s(1)}}\cdots v_{i_{\s(k)}},
     \end{equation}
     is a quantization map. Let $\pi\in\Lambda^2U$ be the Poisson bivector $\pi=\pi^{ij}\del_i\wedge \del_j$ induced by $\o$. The star-product $\star_\phi$ has the following explicit formula: 
    \begin{equation}\label{eq moyal weyl star-product}
        a\star_\phi b=m(\exp\left(\pi/2)(a\otimes b)\right),
    \end{equation}
     where $m$ is the multiplication map $a\otimes b\mapsto ab$. For example, if $U=\CC x\oplus \CC y$, $\pi=\del_x\wedge\del_y$
     $$a\star_\phi b=m\circ\exp\left(\frac{\del_x\otimes\del_y-\del_y\otimes\del_x}{2} \right)(a\otimes b). $$
\end{es}

\begin{oss}
    The commutator on the Weyl algebra is of degree $-2$. One can check explicitly that the quantization map $\phi$ defined in \eqref{eq symmetrization map} induces a star-product (it satisfies the $\ZZ/2\ZZ$-equivariant condition described in \cite{etingofShortStarproductsFiltered2020}). Alternatively, we can add a formal variable $t$, central and of degree $1$, and homogenize the defining relations to 
    $$u\otimes v-v\otimes u-t\o(u,v), \quad u,v\in U. $$
    We extend $\phi:\Ss(U)[t]\rightarrow W(U)[t]$ by sending $t\mapsto t$ and obtain a star-product $\star_\phi$ on $\Ss(U)[t]$ as in Remark \ref{oss:gradedstarproducts}. Setting $t=1$, this becomes the Moyal-Weyl star-product. We prefer the latter point of view, as it makes the connection between the Moyal-Weyl and Gutt star-product more explicit (see Remark \ref{oss:moyalweylspecialcaseGutt}).
\end{oss}

\begin{es}[Gutt star-product]\label{es gutt star-product}
    Let $\g$ be a Lie algebra. Then the symmetric algebra $\Ss(\g)$ is a graded Poisson algebra with Poisson bracket induced by the Lie bracket on $\g$. From the Poincaré–Birkhoff–Witt theorem, the universal enveloping algebra $\UUU(\g)$ is a quantization of $\Ss(\g)$. Let $\{v^i\}_{i\in\Ii}$ be a basis of $\g$. The map $\phi:\Ss(\g)\rightarrow \UUU(\g)$, given by symmetrization as in \eqref{eq symmetrization map}, is a quantization map. For all $x,y\in\g$, let $\mathrm{BCH}(x,y)$ be the formal series given by the Baker-Campbell-Hausdorff formula:
     \begin{equation}\label{eq BCH}
         \begin{aligned}
             &\exp(x)\exp(y)=\exp(\mr{BCH}(x,y))=\exp(x+y+\frac{1}{2}[x,y]+\frac{1}{12}([x,[x,y]]+[[x,y],y])+\dots),
         \end{aligned}
     \end{equation}
     where the identity above holds in the ring of formal power series in the non-commutative variables $x,y$. Define now the following symbol in the commuting variables $\underline{s}=(s_i)_{i\in\Ii}$ and $\underline{t}=(s_i)_{i\in\Ii}$:
     \begin{equation}\label{eq def diff operator BCH}
         D(\underline{s},\underline{t}):=\frac{1}{2}s_it_j[v^i,v^j]+\frac{1}{12}(s_is_jt_k[v^i[v^j,v^k]]+s_it_jt_k[[v^i,v^j],v^k]])+\dots
     \end{equation}
     so that, if $\underline{s},\underline{t}\in\CC^n$, $x=s_iv^i$ and $y=t_iv^i$,
     \begin{equation*}
         D(\underline{s},\underline{t})=\mr{BCH}(x,y)-x-y.
     \end{equation*}
     The star-product $\star_\phi$ has the following explicit formulation. For $a,b\in\Ss(\g)$,
     \begin{equation}\label{eq BCH Gutt star-product}
         a\star_\phi b=m\circ\exp(D(\overleftarrow{\underline{\del}},\overrightarrow{\underline{\del}}))(a\otimes b),
     \end{equation}
     where $\overleftarrow{\underline{\del}}=(\del_{v_i}\x1)_{i\in\Ii}$ and $\overrightarrow{\underline{\del}}=(1\x \del_{v_i})_{i\in\Ii}$.
\end{es}
\begin{oss}
    The Gutt star-product is usually defined over finite-dimensional Lie algebras, but the construction and formula \eqref{eq BCH Gutt star-product} work for infinite-dimensional Lie algebras too (see, for example, \cite[Section 2.2]{espositoConvergenceGuttStar2017}). Notice that all the sums appearing in the definition of $D(\underline{s},\underline{t})$ are now infinite, but only a finite number of terms of $D(\overleftarrow{\underline{\del}},\overrightarrow{\underline{\del}})$ applied to elements in $\Ss(\g)$ are non zero.
\end{oss}

\begin{oss}\label{oss:moyalweylspecialcaseGutt}
    The Moyal-Weyl star-product can be seen as a special case of the Gutt star-product. In fact, consider the $2n+1$ dimensional Heisenberg Lie algebra $\hh_n$, with basis $x_1,\dots,x_n$, $y_1,\dots,y_n,Z$, and bracket
    \begin{equation*}
        [x_i,y_j]=\delta_{i,j}Z, \hspace{5mm} [x_i,x_j]=[y_i,y_j]=[Z,x_i]=[Z,y_i]=0,
    \end{equation*}
    for all $i,j$. The quotient $\UUU(\hh_n)/(Z-1)$ is isomorphic to the Weyl algebra $W(U)$, where $U$ is a $2n$ dimensional vector space. In the Baker-Campbell-Hausdorff formula for $\hh_n$ only the first two terms may appear, so that $$D(\overleftarrow{\underline{\del}},\overrightarrow{\underline{\del}})=Z\sum_{i=1}^n\frac{\del_{x_i}\otimes\del_{y_i}-\del_{y_i}\otimes\del_{x_i}}{2},$$ 
    and, in the quotient, \eqref{eq BCH Gutt star-product} becomes \eqref{eq moyal weyl star-product}. Another way of seeing this fact is to consider a symplectic vector space $U$ as a non-linear Lie algebra, with a Lie bracket with coefficients in $\CC$. 
\end{oss}

\begin{oss}
    Note how both formulae \eqref{eq moyal weyl star-product} and \eqref{eq BCH Gutt star-product} are of the form
    $$\exp (\text{bidifferential operator}),$$
    where the bidifferential operator only depends on the structure constants of the Poisson bracket.
\end{oss}

\subsection{Star-deformations for Poisson vertex algebras}\label{sec:stardeformations}

The definition of star-deformations for Poisson vertex algebras was given by Li in \cite[Definition 5.20]{liVertexAlgebrasVertex2004}, motivated by the analogy with Poisson algebras. In this definition, a star-deformation is a formal state-field correspondence $Y_\e(-,z)$ on a Poisson vertex algebra $\Vv$, such that the non-negative $(n)$-products give a deformation of the Poisson $\l$-bracket. We give here a different, equivalent definition, based on the integral $\l$-bracket formalism (see Definition \ref{def va integral}).

\begin{defi}\label{def:stardeformation}
    Let $(\Vv,1,\del,\{\cdot_\l\cdot\})$ be a Poisson vertex algebra. Then a star-deformation of $\Vv$ is a bilinear operation
    \begin{equation*}
        I_{\l,\star}:\Vv[[\e]]\otimes \Vv[[\e]]\rightarrow \Vv[[\e,\l]]
    \end{equation*}
    defined for $a,b\in \Vv$, and then extended to $\Vv[[\e]]$ by linearity, as 
    \begin{equation}\label{eq def star integral l bracket}
        I_{\l,\star}(a,b)=\sum_{n\geq0}\e^n I_{\l,n}(a,b),
    \end{equation}
     where $I_{\l,n}:\Vv\otimes \Vv\rightarrow \Vv[\l]$ are bilinear operators that satisfy the following axioms:
    \begin{enumerate}[(i)]
        \item $(1,\del,I_{\l,\star})$ induce a vertex algebra structure on $\Vv/\e^n \Vv$, for all $n\geq1$.
        \item $I_{0,0}(a,b)=ab$ and $\frac{d}{d \l}I_{\l,0}(a,b)=0$ for all $a,b\in\Vv$.
        \item $\frac{d}{d\l}I_{\l,1}(a,b)=\{a_\l b\}$ for all $a,b\in\Vv$.
    \end{enumerate}
    The star-deformation is called {\em strict} if the sum \eqref{eq def star integral l bracket} is convergent for some non-zero values of $\e$.
\end{defi}

\begin{oss}
     Due to convergence issues, a star-deformation $I_{\l,\star}$ will not define a vertex algebra structure on $\Vv[[\e]]$. In fact, even if $I_{\l,\star}(a,b)\in\Vv[[\e]][\l]$ for $a,b\in\Vv$, 
     \begin{equation}
         I_{\l,\star}\left(\,\sum_{n\geq0}a_n\e^n, b\,\right)=\sum_{n\geq0}I_{\l,\star}(a_n,b)\e^n\in \Vv[[\e,\l]].
     \end{equation}
     This explains the technicality in condition (i). In the terminology of \cite{liVertexAlgebrasVertex2004}, $I_{\l,\star}$ induces a $\e$-adic vertex algebra structure on $\Vv[[\e]]$. 
\end{oss}
Assume now that $\Vv$ is positively graded with Poisson $\l$-bracket of degree $-i$, for some $i\geq 1$, that is
\begin{equation*}
    \Vv=\bigoplus_{n\geq 0}\Vv_n, \quad \Vv_n\cdot\Vv_m\subset\Vv_{n+m}, \quad \{\Vv_n\,_\l\Vv_m\}\subset\Vv_{n+m-i}\otimes\CC[\l], \quad \forall m,n\in\NN.
\end{equation*}
We again require that the star-deformation is a graded deformation of the Poisson vertex algebra structure on $\Vv$. Namely, this means imposing that
\begin{equation*}
    I_{\l,k}:\Vv_n\otimes\Vv_m\longrightarrow \Vv_{n+m-ki}\otimes\CC[\l] \hspace{10mm} \text{for all $n,m$ and $k$}.
\end{equation*}
Since the sums are finite, the star-deformation is strict and we can specialize $\e$ to $1$ and work on $\Vv$.

\begin{defi}
    Let $V$ be a vertex algebra with a good filtration $\{F_nV\}_{n\geq0}$. Then $\gr(V)$ is a Poisson vertex algebra (see Section \ref{sec:poissonvertex}). We say that $V$ is a quantization of $\Vv$ if $ \gr V\cong \Vv $ as graded Poisson vertex algebras.
\end{defi}

\begin{oss}\label{oss:gradedstardeformations}
    Let $V$ be a filtered algebra with a good filtration, with $\l$-bracket of degree $-1$ and set $\Vv=\gr(V)$. From a quantization map $\phi:\Vv\rightarrow V$, such that $\phi(\del a)=\del\phi(a)$, for all $a\in\Vv$, we can construct a star-deformation in the following way. Extend $\phi:\Vv[\l]\rightarrow V[\l]$ by linearity, then define
    \begin{equation}\label{eq star integral l bracket from quant map}
        I_{\l,\star}(a,b):=\phi^{-1}I_\l(\phi(a)\phi(b)).
    \end{equation}
    On the other hand, if $I_{\l,\star}$ is a graded star-deformation on $\Vv$, then $(\Vv,1,\del,I_{\l,\star})$ with filtration induced by the grading is a quantization of $\Vv$, with the identity as quantization map. The proof is basically the same as the one for star-products (see Remark \ref{oss:gradedstarproducts}). 
\end{oss}

\begin{problem}
    Compute explicit, closed formulae for star-deformations of Poisson vertex algebras, using quantization maps.
\end{problem}

As we explain in the next section, a star-deformation is not the ``correct'' chiral analogue of a star-product. Nonetheless, explicit formulae for star-deformations have important applications. Vertex algebras have very complicated relations and computations, even with the $\l$-bracket, become complicated very quickly. A formula for a star-deformation allows one to perform vertex algebra computations in the setting of Poisson vertex algebras, which are much more tractable. From the physics point of view, it corresponds to constructing a (very special) quantum field theory inside the formalism of classical field theory. We defer the computation of these formulae to Section \ref{sec:explicitstar}

\subsection{Chiral star-products}\label{sec:chiralstarproducts}

\iftrue

Our aim is to compute explicit formulae for some chiral analogues of the Moyal-Weyl and Gutt star-products. First, we need to find a suitable definition of what a chiral star-product should be. Informally, chiralization should be thought of as the inverse of the Zhu functor. Let $\Aa$ be a Poisson algebra and $\Vv$ a Poisson vertex algebra, such that $\Aa=\Zhu(\Vv)$. Let $\star$ be a star-product on $\Aa$, so $\star$ is a deformation of the Poisson structure on $\Aa$ such that $(\Aa,\star)$ is an associative algebra. Ideally, a chiral star-product would satisfy the following two properties:
\begin{enumerate}
    \item It is a suitable deformation of the Poisson vertex algebra structure on $\Vv$.
    \item Let $\hat\star$ denote the deformation of the commutative product on $\Vv$ given by the chiral star-product and let $p_Z:\Vv\rightarrow \Vv/\Jj_\h=\Aa$ be the Zhu quotient map. Then, 
\begin{equation}\label{eq:conditionchiralstarproduct}
    {p_Z(a\hat{\star}b)}=p_Z(a)\star p_Z(b), \quad \forall a,b\in \Vv.
\end{equation}
\end{enumerate}
A strict star-deformation satisfies the first condition. Unfortunately, a star-deformation in general does not satisfy the second condition. In fact, the star-deformation is in general not even compatible with the Zhu quotient. This is because the space we quotient by is defined in terms of the $\h$-Poisson vertex algebra structure, and may not be directly compatible with the vertex algebra structure induced by the star-deformation (see Section \ref{sec:zhuhvertex}). This suggests the following definition.

\begin{defi}\label{def:chiralstarproduct}
    Let $(\Vv,1,\del,\pbrl{\cdot}{\cdot})$ be a Poisson vertex algebra and $\Aa=\Zhu(\Vv)$. A chiral star-product is a bilinear operation
    $$I_{\l,\h,\star}:\Vv\otimes\Vv\rightarrow \Vv[\l], $$
    satisfying the following:
    \begin{enumerate}[(i)]
    \item $(\Vv,1,\del,I_{\l,\h,\star})$ defines an $\h$-vertex algebra on $\Vv$ (with the $\h$-sum bracket formalism, see Section \ref{sec:sumhbraket}), such that $I_{\l,\h=0,\star}$ is a star-deformation on $\Vv$.  
    \item The operation $I_{0,\h,\star}$ induces a star-product $\star$ on the Poisson Zhu algebra $\Aa$ by
        $$p_Z(a)\star p_Z(b):=p_ZI_{0,\h,\star}(a, b),$$
        such that $(\Aa,\star)$ is isomorphic to the associative Zhu algebra of the $\h$-vertex algebra $(\Vv, I_{\l,\h,\star})$. 
    \end{enumerate}
\end{defi}

Let $V$ be a vertex algebra with a good filtration, $\Vv$ a Poisson vertex algebra, $A$ an associative algebra, and $\Aa$ a Poisson algebra that fit in the following diagram:
\begin{equation}\label{diagrammasimboli}
    \begin{tikzcd}
\Vv \arrow[d, "p_Z"'] & V \arrow[l, "\gr"'] \arrow[d, "q_Z"] \\
\Aa                    & A \arrow[l, "\gr"']                   
\end{tikzcd}
\end{equation}
where $p_Z,q_Z$ denote the Zhu quotient maps. Let $\Jj_\h$ and $J_\h$ be the respective kernels and let $\tau_n:F_n V\rightarrow F_nV/F_{n-1}V$ and $\pi_n:F_n A\rightarrow F_nA/F_{n-1}A$ denote the projection maps to the associated graded, for all $n\in\NN$. Then
\begin{equation}\label{eq zhu gr commute}
    \pi_n\circ q_Z=p_Z\circ\t_n, \quad \forall n\in\NN.
\end{equation}

\begin{prop}\label{prop chiralization star-product}
    Assume that the $\l$-bracket on $V$ is of degree $-1$. Let $\phi$ be a quantization map $\Vv\rightarrow V$, such that $\phi\circ\del=\del\circ\phi$ and $\phi(\Jj_\h)=J_\h$. Then $\phi$ induces a quantization map $\hat{\phi}:\Aa\rightarrow A$, defined by
\begin{equation*}
    \hat{\phi}(a+\Jj_\h)=\phi(a)+J_\h.
\end{equation*}
    Moreover, the operation $I_{\l,\h,\star}$ defined by
\begin{equation*}
   I_{\l,\h,\star}(a,b):=\phi^{-1}\left(I_{\l,\h}(\phi(a),\phi(b)) \right), \quad \forall a,b\in\Vv,
\end{equation*}
    is a chiral star-product, where $I_{\l,\h}$ is the sum $\h$-bracket of the $\h$-vertex algebra associated to $V$. It is the chiralization of the star-product $\star_{\hat\phi}$.
\end{prop}
\begin{proof}
    Since $\Zhu(\Vv)=\Vv/\Jj_{\h} $ and $\Zhu(V)=V/J_{\h}$, the map $\hat{\phi}$ is a well defined vector space isomorphism. We are left to prove that $\pi_n\circ\hat{\phi}=\id$ for all $n\in\NN$. Let $a$ be a homogeneous element of degree $n$ in $\Aa$. Then $a=p_Z(x)$, for some $x\in\Vv$. By \eqref{eq zhu gr commute}, $\hat{\phi}$ inherits the quantization map property from $\phi$:
    \begin{equation*}
        \p_n\circ\hat{\phi}(a)=\p_n\circ q_z(\phi(x))=p_Z(\t_n\circ \phi(x))=p_Z(x)=a.
    \end{equation*}
    Since $\phi$ is an isomorphism of vector spaces commuting with $\del$, $(\Vv,1,\del,I_{\l,\h,\star})$ defines an $\h$-vertex algebra structure on $\Vv$. Moreover, $I_{\l,0,\star}$ is a star-deformation on $\Vv$ by Remark \ref{oss:gradedstardeformations}. Now, let $\star_{\hat{\phi}}$ be the star-product on $\Aa$ defined by $\hat{\phi}$. From a direct computation:
    \begin{align*}
        p_Z(I_{0,\h,\star}(a,b))&=p_Z[\phi^{-1}(\phi(a)\ast_\h\phi(b))]=\hat{\phi}^{-1}\circ q_Z(\phi(a)\ast_\h\phi(b))\\
        &=\hat{\phi}^{-1}[q_Z(\phi(a))\cdot q_Z(\phi(b))]=\hat{\phi}^{-1}[\hat{\phi}(p_Z(a))\cdot\hat{\phi}(p_Z(b))]\\
        &=p_Z(a)\star_{\hat{\phi}}p_Z(b).
    \end{align*}
    Thus $I_{\l,\h,\star}$ is a chiral star-product. 
\end{proof}

We now have a suitable definition of chiral star-products and a way to construct them. We would like to construct chiral star-products that are the chiralization of known star-products, in particular the Moyal-Weyl and Gutt star-products. We put ourselves in the following setting. Let $R$ be a sub-linear vertex Lie algebra. Assume that $R=\CC[\del]\otimes \g$ is a free $\CC[\del]$-module, generated by a finite dimensional vector space $\g$. Consider the universal enveloping vertex algebra $V(R)$ (see Theorem \ref{thm:universalvertexalgebra}). Recall the definition of the Lie Zhu algebra of a vertex Lie algebra $R$ as in \eqref{eq:zhuvertexliehuang}: 
$$\Zhu(R)=R/\del R.$$
This is naturally a Lie algebra, with bracket induced by $\restr{\brl{\cdot}{\cdot}}{\l=0}$.

\begin{theorem}\label{teo zhu algebra of universal enveloping va}
    Let $R=\CC[\del]\otimes \g$ be as above. Then 
\begin{equation*}
    \Zhu(V(R))\cong \UUU(\g).
\end{equation*}
\end{theorem}
\begin{proof}
    This is essentially {\cite[Corollary 3.26]{desoleFiniteVsAffine2006}}, rewritten by considering the change of variables Zhu algebra construction. In particular, we consider the Lie Zhu algebra $\Zhu(R)$ defined as in \eqref{eq:zhuvertexliehuang}, while De Sole and Kac consider the one defined in \eqref{eq:liezhualgebraDSK}.  
\end{proof}

Fix $\overline{\Bb}=\{u_1,\dots,u_n\}$ a basis of $\g$ and extend it to a basis of $R$ by setting $\Bb=\{u_\a\}_{\a\in I}$, where $I=\{1,\dots,n\}\times\NN$ and $u_{(i,k)}:=\del^ku_i$. Order $\Bb$ by lexicographic order. Since $V(R)$ is PBW generated by $R$, ordered monomials in elements of $\Bb$ with respect to the normally ordered product form a basis of $V(R)$. Consider the good filtration $\{F_nV(R)\}_{n\geq0}$ induced by the grade of the PBW monomials.

\begin{oss}\label{oss:PBWfiltrationhproduct}
    As proved in \cite[Lemma 3.18]{desoleFiniteVsAffine2006}, monomials with respect to the $\ast_\h$ product give an alternative PBW basis of $V(R)$. Since $a\ast_\h b$ and ${:}ab{:}$ differ only by non-negative $(n)$-products, which are in lower terms of the filtration, the filtration induced by this second PBW basis coincides with $\{F_nV(R)\}_{n\geq0}$.
\end{oss}

Consider $\Ss(R)$, the symmetric algebra on $R$, with its natural structure of Poisson vertex algebra, induced by $R$. The Poisson vertex algebra $\Ss(R)$ can also be thought of as the algebra of differential polynomials on $\g$:
\begin{equation*}
    \Ss(R)\cong\CC[\del^{k}u_i]^{k\geq0}_{i=1,\dots,n}=\CC[u_\alpha]_{\alpha\in I},
\end{equation*} 
with Poisson $\l$-bracket 
$$ \pbrl{\del^k u_i}{\del^m u_j}=(-\l)^k(\l+\del)^m\brl{u_i}{u_j}. $$
The Zhu algebra associated to $\Ss(R)$ is
\begin{equation*}
    \Zhu(\Ss(R))=\Ss(R)/(\Ss(R)\del\Ss(R))\cong \Ss(\Zhu (R))=\Ss(\g).
\end{equation*}
Thus, diagram \eqref{diagrammasimboli} in this new setting becomes:
\begin{equation}\label{diagramma zhu gr commute}
    \begin{tikzcd}
        \Ss(R)  \arrow[d, "p_Z"']            & V(R) \arrow[l, "\gr"'] \arrow[d, "q_Z"]\\
        \Ss(\g)             & \UUU(\g) \arrow[l, "\gr"]
    \end{tikzcd}
\end{equation}

\begin{theorem}\label{thm:liftingquantizationmap}
    Any quantization map $\hat{\phi}:\Ss(\g)\rightarrow\UUU(\g)$ can be lifted to a quantization map $\phi:\Ss(R)\rightarrow V(R)$ such that $\hat{\phi}(a+\Jj_\h)=\phi(a)+J_\h$. The chiral star-product defined by $\phi$ as in Proposition \ref{prop chiralization star-product} is the chiralization of the star-product $\star_{\hat{\phi}}$, defined as in Remark \ref{oss:gradedstarproducts}.
\end{theorem}

To prove the theorem, we need a technical result.

\begin{lemma}\label{lemma basis J_h}
\leavevmode
\begin{enumerate}[(a)]
    \item $B_1=\{u_{i_1}\dots(\del u_{i_k})\dots u_{i_n} \, | \, u_{i_j}\in\Bb, n\geq 1, i_1<i_2<\dots<i_n \}$ is a basis of $\Jj_\h$.
    \item $B_2=\{u_{i_1}\ast_\h\dots\ast_\h(\del u_{i_k})\ast_\h\dots\ast_\h u_{i_n} \, | \, u_{i_j}\in\Bb, n\geq 1, i_1<i_2<\dots<i_n \}$ is a basis of $J_\h$.
\end{enumerate}
In particular, as vector spaces, $J_\h\cong\Jj_\h$.
\end{lemma}
\begin{proof}
    A generic element in $\Jj_\h$ is of the form $(\del a)b$, with $a,b\in\Ss(R)$. Writing $a,b$ as polynomials in elements of $\Bb$ and since $\del$ is a derivation, $(\del a)b$ is spanned by elements in $B_1$. Since it is also a linearly independent set, $B_1$ is a basis of $\Jj_\h$.

    For part (b), we first show that $J_\h\subset \Span_\CC (B_2) $. Let $x\in J_\h$. We proceed by induction on $n$, the lowest natural number such that $x\in F_n V(R)$. We can assume that $x=(\del a)\ast_\h b$ for some $a,b\in V(R)$. Write $a,b$ in the PBW basis with respect to the product $\ast_\h$. Since $\del$ is a derivation of $\ast_\h$, $x$ is a linear combination of elements 
    \begin{equation*}
        x_\a=(u_{i_1}\ast_\h\dots\ast_\h(\del u_{i_k})\ast_\h\dots\ast_\h u_{i_m})\ast_\h(u_{j_1}\ast_\h\dots\ast_\h u_{j_t}),
    \end{equation*}
    with $u_{i_s}, u_{j_q}\in\Bb$, $i_1<\dots<i_m$, $j_1<\dots<j_t$, and $m+t\leq n$.
    By the quasi-associativity axiom of $\ast_\h$ \eqref{eq associator -1 h prod}, we can rearrange the parentheses so that $x_\a\in B_2$, modulo elements in $F_{n-1}V(R)$. By Theorem \ref{teo Zhu h va}, all the associators are elements of $J_\h$. So $x_\a\in B_2$ modulo some terms in $F_{n-1}V(R)\cap J_\h$. By induction hypothesis, we can write $x$ as a linear combination of elements in $B_2$. 
    
    Since it is a subset of the PBW basis, $B_2$ is a linearly independent set. We are left to prove that $B_2\subset J_\h$. Let 
    $B_3:=\{a_{1}\ast_\h\dots\ast_\h(\del a_{k})\ast_\h\dots\ast_\h a_{n} | a_{i}\in R, n\geq1 \}.$  
    Clearly, $B_2\subset B_3$. We show that $B_3\subset J_\h$. Take $y=a_{1}\ast_\h\dots\ast_\h(\del a_{k})\ast_\h\dots\ast_\h a_{n}\in B_3$. We proceed by induction on $n$. Using equation \eqref{eq left symmetric associator} multiple times, we can move $(\del a_{k})$ to the first position on the left, modulo terms of the form 
    \begin{equation*}
        (\dagger)\ a_{1}\ast_\h\dots\ast_\h {a_{{l-1}}}\ast_\h (a_{l}\ast_\h (\del a_{k})- (\del a_{k})\ast_\h a_{l})\ast_\h a_{{l+1}}\ast_\h \dots\ast_\h a_{{k-1}}\ast_\h a_{{k+1}}\ast_\h\dots\ast_\h  a_{n}.
    \end{equation*}
    By the proof of Lemma \ref{lemma J_h ideal}, and since the commutator goes down in the filtration, 
    \begin{equation*}
        a_{l}\ast_\h (\del a_{k})- (\del a_{k})\ast_\h a_{l}\in J_\h\cap F_1V(R)=\del R.
    \end{equation*}
    Hence $(\dagger)$ is an element in $B_3\cap F_{n-1}V(R)$. By induction, it follows that $y\in J_\h$.
\end{proof}

\begin{proof}[Proof (of Theorem \ref{thm:liftingquantizationmap})]
    Recall that $R=\CC[\del]\otimes \g$, so, as vector spaces,
    \begin{equation}\label{eq:decompositionS(R)}
        V(R)\cong\Ss(R)=\Ss(\g)\oplus \del\Ss(R),
    \end{equation}
    where $\del\Ss(R)$ is the span of the ordered monomials in $\Bb$, where at least one factor is of the form $\del u_{i}$, for $u_i\in \Bb$. By Lemma \ref{lemma basis J_h}, $\del\Ss(R)=\Jj_\h\cong J_\h$, as vector spaces. We can extend $\hat{\phi}$ to an isomorphism of vector spaces $\phi:\Ss(R)\rightarrow V(R)$ using decomposition \eqref{eq:decompositionS(R)} and   
    identifying the bases of $J_\h$ and $\Jj_h$. By Remark \ref{oss:PBWfiltrationhproduct}, the PBW basis of $V(R)$ with respect to the $\ast_\h$-product induces the good filtration. Thus $\phi$ is a quantization map. 

    We now want to apply Proposition \ref{prop chiralization star-product}. First, we need to make sure that the $\l$-bracket is of degree $-1$ (recall that $R$ is a sub-linear vertex Lie algebra, so this is not guaranteed). Consider a central extension of $\g$ by a new variable $t$. Let $R'=\CC[\del]\otimes (\g\oplus\CC t)$. It is a vertex Lie algebra, with $\l$-bracket obtained by homogenizing the $\l$-bracket of $R$. The vertex algebra $V(R')$ has a good filtration and $\l$-bracket of degree $-1$. Extend $\phi:\Ss(R')\rightarrow V(R')$ by sending $t\mapsto t$. We are now in the hypothesis of Proposition \ref{prop chiralization star-product}, so $\phi$ induces a chiral star-product on $\Ss(R')$. To get a chiral star-product on $\Ss(R)$, it is sufficient to put $t=1$.
\end{proof}

\begin{es}
    Let $\hat{\phi}:\Ss(\g)\rightarrow\UUU(\g)$ be the symmetrization map
    \begin{equation}
         v_{i_1}\cdots v_{i_k}\mapsto \frac{1}{k!}\sum_{\sigma\in S_k}v_{i_{\s(1)}}\cdots v_{i_{\s(k)}}, \quad v_{i_j}\in \bar{\Bb}.
     \end{equation}
     A chiralization of $\hat\phi$ is $\phi:\Ss(R)\rightarrow V(R)$ given by
    \begin{equation}\label{eq symmetrization map h prod}
    u_{i_1}\dots u_{i_n}\mapsto \frac{1}{n!}\sum_{\s\in S_n}u_{i_{\s(1)}}\ast_\h\dots\ast_\h u_{\s(i_n)}, \quad u_{i_j}\in\Bb.
\end{equation}
\end{es}
\begin{es}
    Both the Moyal-Weyl and Gutt star-products are constructed using the symmetrization map $\hat{\phi}$, where $\g$ is either a Lie algebra (Gutt) or a symplectic vector space (Moyal-Weyl), that can be thought of as a sub-linear Lie algebra with coefficients in $\CC$. The quantization map $\phi$ \eqref{eq symmetrization map h prod} induces chiralization of these two star-products.
\end{es}

\subsection{Explicit formulae for chiralized star-products}\label{sec:explicitcomputations}

We keep the same notation and conventions of Section \ref{sec:chiralstarproducts}. In this section we compute an explicit formula for the chiral star-product induced by the quantization map $\phi$ \eqref{eq symmetrization map h prod}. 

\begin{oss}
    The map $\phi:\Ss(R)\rightarrow V(R)$ can be written as the following composition:
    \begin{equation}\label{diagram factor quantization map}
        \begin{tikzcd}
            \Ss(R) \arrow[r, "\ga"]\arrow[rr, "\phi"', bend right] & \UUU(R_L) \arrow[r, "\psi"] &V(R),
        \end{tikzcd}
    \end{equation}
    where 
    \begin{itemize}
        \item $R_L$ is the Lie algebra associated to $R$, with Lie bracket given by 
        \begin{equation*}
            [a,b]:=\sum\nolimits^0_{-\del-\h}\br[x]{a}{b} \d x,
        \end{equation*}
        and $\UUU(R_L)$ is its universal enveloping algebra;
        \item $\gamma$ is the symmetrization map $\Ss(R)\rightarrow \UUU(R_L)$ given by \eqref{eq symmetrization map};
        \item $\psi$ is the map defined inductively by $\psi(a):=a$, for all $a\in R$ and $\psi(aB):=a\ast_\h B$, for all $a\in R$ and  for all $B\in \UUU(R_L)$.
    \end{itemize}
    Both $\ga$ and $\psi$ are vector spaces isomorphisms; we are thus performing the quantization $\Ss(R)\rightarrow V(R)$ in two steps: the first is a non-commutative deformation, the second is a non-associative deformation.
\end{oss}
Let us first consider the non-associative deformation induced by $\psi$. Define $$\hat{I}_{\l,\h}:\UUU(R_L)\otimes \UUU(R_L)\rightarrow \UUU(R_L)[\l]$$ as, for $a,b\in \UUU(R_L)$,
\begin{equation*}
    \hat{I}_{\l,\h}(a,b):=\psi^{-1}(I_{\l,\h}(\psi(a),\psi(b))).
\end{equation*}

Define the following operators:
\begin{enumerate}[(a)]
    \item $L_a^\l:\UUU(R_L)\rightarrow \UUU(R_L)[\l]$, for every $a\in R_L$, is
    \begin{equation*}
        L_a^\l(x):=\psi^{-1}\left(\sum\nolimits^\l_{-\del-\h}\br[z]{\psi(x)}{a}\d z\right);
    \end{equation*}
    \item  let $x\in \UUU(R_L)$ and $a=a_1\otimes\dots\otimes a_n$, with $a_i\in R$ for all $i$; then $L_\l:\UUU(R_L)\otimes \Tt(R)\rightarrow \UUU(R_L)[\l]$ is defined as
\begin{equation}\label{eq:operatorLnewdef}
    L_\l(x,a):=\sum_{k=0}^n\, \sum_{1\leq i_1<\dots<i_k\leq n}(a_1\cdots a_n)_{i_1,\dots,i_k} L^\l_{a_{i_k}}\circ\cdots\circ L^\l_{a_{i_1}}(x),
\end{equation}
where the subscript $i_1,\dots,i_k$ means that the terms $a_{i_1},\dots,a_{i_k}$ are removed from the product.
\end{enumerate}

\begin{prop}\label{prop S well defined}
    The operator $L_\l$ induces a well-defined operator (that we still denote by $L_\l$)
    \begin{equation*}
        L_\l:\UUU(R_L)\otimes \UUU(R_L)\rightarrow \UUU(R_L)[\l].
    \end{equation*}
    Moreover, for all $a,b\in \UUU(R_L)$,
    \begin{equation*}
        \hat{I}_{\l,\h}(a,b)=L_\l(a,b).
    \end{equation*}
\end{prop}
\begin{proof}
    We want to prove that $L_\l$ satisfies the following recursive relations, for all $x\in \UUU(R_L)$, $a_0\in R$ and $B\in\Tt(R)$:
   \begin{enumerate}[(i)]
        \item $L_\l(x,1)=x$;
       \item $L_\l(x, (a_0\otimes B))=a_0L_\l(x, B)+L_\l(L^\l_{a_0}(x),B)$.
   \end{enumerate}
   The base case is true by definition. Let now $x\in \UUU(R_L)$, $a_0\in R$ and $B\in\Tt(R)$, with $B=a_1\otimes\dots\otimes a_n$. Then 
   \begin{equation*}
       L_\l(x,a_0\otimes B)=\sum_{k=0}^{n+1}\, \sum_{0\leq i_1<\dots<i_k\leq n} (a_0a_1\cdots a_n)_{i_1,\dots,i_k}  L^\l_{a_{i_k}}\dots L^\l_{a_{i_1}}(x).
   \end{equation*}
   We can split the inner sum into the two cases $i_1\neq 0$ and $i_1=0$. In the first case, we can take $a_0$ out of the sum, getting $a_0L_\l(x,B)$. In the case $i_1=0$ we have instead
   \begin{equation*}
       \sum_{k=1}^{n+1}\, \sum_{1\leq i_2<\dots<i_{k}\leq n}(a_1\cdots a_n)_{i_2,\dots,i_k} L^\l_{a_{i_k}}\dots L^\l_{a_{i_2}}(L^\l_{a_0}(x))=L_\l(L^\l_{a_0}(x),B).
   \end{equation*}
   On the other hand, from Lemma \ref{lemma recursion integral h bracket}, for all $x,B\in \UUU(R_L)$ and $a\in R$,
    \begin{enumerate}[(i)]
       \item $\hat{I}_{\l,\h}(x,1)=x$,
       \item $\hat{I}_{\l,\h}(x, aB)=a\hat{I}_{\l,\h}(x, B)+\hat{I}_{\l,\h}(L^\l_a(x),B)$.
   \end{enumerate}
   It follows that $L_\l(x,a_1\otimes\cdots\otimes a_n)=\hat{I}_{\l,\h}(x,a_1\cdots a_n)$, for all $x\in \UUU(R_L)$, $a_i\in R$. Since $L_\l$ depends only on the projection of the second factor to $\UUU(R_L)$, it induces a well-defined operator on $\UUU(R_L)\otimes\UUU(R_L)$.
\end{proof}

\begin{cor}\label{cor commutation relation S and D}
     For all $a,b\in R$, 
    \begin{equation}\label{eq Lie action S}
        L^\l_bL_a^\l-L_a^\l L_b^\l=L^\l_{[a,b]}.
    \end{equation}
    In particular, $L^\l_{(-)}$ defines an action of the opposite Lie algebra $R_L^{op}$ on $\UUU(R_L)$.
\end{cor}
\begin{proof}
    Let $a,b\in R$ and $x\in \UUU(R_L)$. By direct computation:
    \begin{equation*}
        \begin{aligned}
            0&=L_\l(x,ab-ba-[a,b])\\
            &=abx+aL^\l_b(x)+bL^\l_a(x)+L^\l_b(x)L^\l_a(x)-bax-bL^\l_a(x)-aL^\l_b(x)-L^\l_a(x)L^\l_b(x)-[a,b]x-L^\l_{[a,b]} \\
            &=L^\l_bL_a^\l-L_a^\l L_b^\l-L^\l_{[a,b]}.
        \end{aligned}
    \end{equation*}
\end{proof}

For what follows, it is convenient to rewrite formula \eqref{eq:operatorLnewdef} in a different way. We embed $\Tt(R)$ in $\Tt(R\oplus\CC)$, and define operators $\frac{\delta}{\d i}:\Tt(R\oplus\CC)\rightarrow\Tt(R\oplus\CC)$, as
\begin{equation}
    \frac{\d}{\d i}(a_1\otimes \cdots \otimes a_n)=\begin{cases}
        0 & \text{if } i>n \\
        a_1\otimes\cdots\x a_{i-1}\otimes 1 \otimes a_{i+1}\otimes\cdots\x a_n & \text{if }a_i\in R \\
        0 & \text{if } a_i\in\CC
    \end{cases}
\end{equation}
It is easy to check that, for all $a,x\in\UUU(R_L)$,
\begin{equation}\label{eq S operator}
    \begin{aligned}
        \hat{I}_{\l,\h}(x,a)&=\sum_{k=0}^n\, \sum_{1\leq i_1<\dots<i_k\leq n}\frac{\delta^k a}{\delta{i_1}\dots \delta{i_k}}  L^\l_{a_{i_k}}\circ\dots\circ L^\l_{a_{i_1}}(x).
    \end{aligned}
\end{equation}
\begin{oss}\label{oss:abuseofnotation}
    There is an abuse of notation in formula \eqref{eq S operator}, because the operators $\frac{\d}{\d i}$ are defined on the tensor algebra. What we mean is ``take a lift of $a$ to the tensor algebra, apply $\frac{\d}{\d i}$ and project back to $\UUU(R_L)$''. By Proposition \ref{prop S well defined}, we know that $\hat{I}_{\l,\h}$ does not depend on the choice of the lift.
\end{oss}
Notice that $(\frac{\d}{\d i})^n=0$ for all $i$ and $n\geq2$. In particular,
\begin{equation}
    \exp(\sum_{i=1}^n\frac{\d}{\d i})=\sum_{k=0}^\infty \frac{1}{k!}\left(\frac{\d}{\d 1}+\cdots+\frac{\d}{\d n}\right)^k=\sum_{k=0}^\infty \frac{1}{k!}\sum_{1\leq i_1<\dots<i_k\leq n}k!\frac{\d^k}{\d i_1\cdots\d i_k}.
\end{equation}
If the operators $L^\l_{a_i}$ were commutative, \eqref{eq S operator} could be rewritten in exponential form. 

\paragraph{Notation} Consider the free algebra generated by an ordered set of elements $\{a_i\}_{i\in I}$. The ``normal order'' of a monomial $a=a_{i_1}\cdots a_{i_k}$ is 
\begin{equation}
    {:}a{:}={:}a_{i_1}\cdots a_{i_k}{:}=a_{i_\s(1)}\cdots a_{i_\s(k)},
\end{equation}
where $\s$ is the permutation such that $i_{\s(1)}\geq i_{\s(2)}\geq\dots\geq i_{\s(k)}$. On linear combinations of monomials, the normal order is extended by linearity. We also use the notation ${:}\exp\!:$, which means that the powers in the expansion of the exponential are normally ordered. That is, for any $P$ in the free algebra,
\begin{equation}
    {:}\exp\!:(P)=1+{:}P{:}+\frac12{:}PP{:}+\frac16{:}P P P{:}+\cdots
\end{equation}

If we consider the ordered set $\{\frac{\d}{\d i}\x L_{a_i}\}_{i\geq 1}$, then \eqref{eq S operator} becomes
\begin{equation}
    \hat{I}_{\l,\h}(x,a)=m\circ{:}\exp:\left(\sum_{i\geq 1}\frac{\d}{\d i}\x L^\l_{a_i} \right)(a\x x),
\end{equation}
where $m$ is the associative multiplication.
\begin{prop}
    Let $a\in\UUU(R_L)$ and suppose that there are distinct elements $a_1,\dots,a_s\in R$ such that $a=\pi(a_1^{\otimes n_1}\otimes\cdots\otimes a_s^{\otimes n_s})$, for some $n_1,\dots,n_s\geq1$.
    Then, for all $x\in\UUU(R_L)$, 
    \begin{equation}\label{eq:lastminutederivative}
        \begin{aligned}
            \hat{I}_{\l,\h}(x,a)&=\sum_{k=0}^{n}\frac{1}{k!} \sum_{\substack{        l_1+\cdots+l_s=k}}\binom{k}{l_1\cdots l_s}\frac{\del^k a}{\del^{l_1} a_{1}\cdots \del^{l_s} a_{s}} (L^\l_{a_{s}})^{l_s}\cdots (L^\l_{a_{1}})^{l_1}(x)\\
            &= m\circ {:}\exp\!:\left(\sum_{i=1}^s\frac{\del}{\del a_i}\x L^\l_{a_i} \right)(a\x x).
        \end{aligned}
    \end{equation}
    where $n=n_1+\cdots+n_s$ and the normal order is given by $1<2<\dots<s$. 
\end{prop}
Again, there is a slight abuse of notation in \eqref{eq:lastminutederivative}, as the derivatives are computed on the lift $a_1^{\otimes n_1}\otimes\cdots\otimes a_s^{\otimes n_s}$ of $a$ to the tensor algebra.
\begin{proof}
    Notice that, by Leibniz rule, 
    $$\pdv{}{a_1}()(a_1^{\otimes n_1}\otimes\cdots\otimes a_s^{\otimes n_s})=\left(\frac{\d}{\d1}+\frac{\d}{\d2}+\cdots+\frac{\d}{\d n_1}\right)(a_1^{\otimes n_1}\otimes\cdots\otimes a_s^{\otimes n_s}),$$
    and similarly for all the other partial derivatives. Hence,
    $$\sum_{i\geq 1}\frac{\d}{\d i}\x L^\l_{a_i}=\sum_{j=1}^s\pdv{}{a_i}\x L_{a_i}^\l. $$
    This implies that
    \begin{align*}
    \hat{I}_{\l,\h}(x,a)&=m\circ{:}\exp\!:\left(\sum_{i\geq 1}\frac{\d}{\d i}\x L^\l_{a_i} \right)(a\x x)=m\circ {:}\exp\!:\left(\sum_{i=1}^s\frac{\del}{\del a_i}\x L^\l_{a_i} \right)(a\x x).
    \end{align*}
    Notice that this is only true because we considered a lift of the form $a_1^{\otimes n_1}\otimes\cdots\otimes a_s^{\otimes n_s}$, with $a_i$ distinct. This ensures that the two normal orders are compatible with each other.
\end{proof}

\begin{oss}
    If we take the convention of writing $b$ in the PBW basis, formula \eqref{eq:lastminutederivative} becomes, for $a,b\in \UUU(R_L)$,
    \begin{equation}\label{eq S operator PBW basis}
        \begin{aligned}
            \hat{I}_{\l,\h}(a,b)&=m\circ{:}\exp\!:\left(\sum_{i\in\Ii}\del_{u_i}\otimes L_i^\l\right)(b\otimes a) \\
            &=\sum_{k=0}^\infty\, \frac{1}{k!}\sum_{\substack{i_1<\dots<i_s\in\Ii \\
        l_{i_1}+\cdots+l_{i_s}=k}}\binom{k}{l_{i_1}\cdots l_{i_s}}\frac{\del^k b}{\del^{l_{i_1}} u_{i_1}\dots \del^{l_{i_s}} u_{i_s}}  (L^\l_{i_k})^{l_{i_k}}\dots (L^\l_{{i_1}})^{l_{i_1}}(a), 
        \end{aligned}
    \end{equation}
    where $L^\l_i:=L^\l_{u_i}$. When applied to $b\otimes a$, only finite terms of the infinite sum inside the exponential are nonzero.
\end{oss}

If we compute $\hat{I}_{\l,\h}(a,b)$ for $a,b\in \UUU(R_L)$ using \eqref{eq S operator PBW basis}, we obtain an expression that depends on $L^\l_i(a)$, with $i\in \Ii$. But notice that, by definition of the $\h$-sum bracket and skew-symmetry,  
    \begin{equation}\label{eq operatori Sl}
        \begin{aligned}
            L^\l_i(x)&=\hat{I}_{\l,\h}(x,u_i)-\hat{I}_{-\del-\h,\h}(x,u_i)\\
            &=\hat{I}_{-\l-\del-\h,\h}(u_i,x)-\hat{I}_0(u_i,x)=\hat{I}_{-\l-\del-\h,\h}(u_i,x)-u_i x.
        \end{aligned}
    \end{equation}
    Computing $\hat{I}_{-\l-\del-\h,\h}(u_i,x)$ explicitly, we get    \begin{equation}
        \hat{I}_{-\l-\del-\h,\h}(u_i,x)=m\circ{:}\exp\!:\left(\sum_{j\in\Ii}\del_{u_j}\x L^{-\l-\del-\h}_{j}\right)(x\x u_i),
    \end{equation} 
    which only depends on the $\h$-brackets of elements of $\Bb$:
    \begin{equation*}
        L^{-\l-\del-\h}_j(u_i)=\sum\nolimits^{-\l-\del-\h}_{-\del-\h}\br[x]{u_j \,}{u_i}\d x.
    \end{equation*}
    So each $L^\l_{{i_1}}$ is a (rather complicated) differential operator, defined by \eqref{eq operatori Sl}. Hence, by \eqref{eq S operator PBW basis}, $\hat{I}_{\l,\h}(a,b)$ is a ``normally ordered exponential'' of a bidifferential operator, with coefficients that depend only on the $\h$-bracket within the basis elements of $R$. 

\begin{defi}
    A vertex algebra $V$ is a ``free-field'' vertex algebra if $V=V(R)$, and the $\l$-bracket on $R$ takes values in $\CC[\l]$. Many algebras fall into this class, for example, the $\beta\gamma$-systems and free boson vertex algebras (see Example \ref{es:sublinearvertexliealgebras}).
\end{defi}

\begin{prop}\label{prop cap bracket free field case}
    If $V(R)$ is a free-field vertex algebra, formula \eqref{eq S operator PBW basis} reduces to
    \begin{equation}\label{eq cap bracket free field case}
        \begin{aligned}
            \hat{I}_{\l,\h}(a,b)&=m\circ\s\circ\exp(\sum_{i\in\Ii}L_i^\l\otimes \del_{u_i})(a\otimes b),
        \end{aligned}
    \end{equation}
    where $\sigma$ is the operator $x\otimes y\mapsto y\otimes x$, and
    \begin{equation}
        L_i^\l(a)=\sum_{j\in\Ii}\sum\nolimits^{\l}_{-\del-\h}\pdv{a}{u_j}\br[x+\del^{(1)}]{u_j \,}{u_i}\, \d x .
    \end{equation}
\end{prop}
\begin{proof}
  Note that $[a,b]=\sum\nolimits^0_{-\del-\h}\br{a}{b} \d \l=\sum\nolimits^0_{-\h}\br{a}{b} \d \l\in\CC[\l] $, so $L^\l_{[a,b]}=0$. From Corollary \ref{cor commutation relation S and D}, the operators $L^\l_a$, for $a\in R$, commute with each other. We can thus rewrite \eqref{eq S operator PBW basis} as
    \begin{equation}
        \begin{aligned}
            \hat{I}_{\l,\h}(a,b)&=m\circ\s\circ\exp(\sum_{i\in\Ii}L_i^\l\otimes \del_{u_i})(a\otimes b).
        \end{aligned}
    \end{equation}
    By the left Wick formula \eqref{eq left wick}, for all $x,y\in R$ and $Y\in V(R)$,
    \begin{equation*}
        \br{x\ast_\h Y}{\, y}=x\br[\l+\del^{(1)}]{Y}{y}+Y\br[\l+\del^{(1)}]{x}{y}+\sum\nolimits_0^\l \br[\mu]{Y}{\br[\l-\mu-\h]{x}{y}} \delta \mu.
    \end{equation*}
    The definite sum is $0$ because $\br{x}{y}\in \CC[\l]$. Thus, the $\h$-bracket $\br{x\ast_\h Y}{\, y}$ can be expanded by the left Leibniz rule. This implies that
    \begin{equation*}
        L^\l_i(a)=\psi^{-1}\sum\nolimits^\l_{-\del-\h}\br[x]{\psi(a)}{u_i}\d x=\sum_{j\in\Ii}\sum\nolimits^{\l}_{-\del-\h}\pdv{a}{u_j}\br[x+\del^{(1)}]{u_j \,}{u_i}\, \d x.
    \end{equation*}
\end{proof}
We consider now the whole quantization. Take the chiral star-product $I_{\l,\h,\star}$ induced by $\phi$, that is, for $a,b\in\Vv$,
\begin{equation}\label{def sum h star bracket}
    I_{\l,\h,\star}(a,b):=\phi^{-1}(I_{\l,\h}(\phi(a),\phi(b))).
\end{equation}

\begin{oss}\label{oss symmetric part of the quantization}
    As defined in \eqref{diagram factor quantization map}, the quantization map $\phi$ is equal to $\ga\circ\psi$. Thus, for all $a,b\in \Ss(R)$:
    \begin{align*}
        I_{\l,\h,\star}(a,b)&=\ga^{-1}(\hat{I}_{\l,\h}(\ga(a),\ga(b)))=\ga^{-1}\circ m\circ {:}\exp\!:\left(\sum_{i\in\Ii}\del_{u_i}\otimes L_i^\l\right)(\ga(b)\otimes \ga(a)).
    \end{align*}
    So, if we can find some $x,y\in\Ss(R)$ such that
    \begin{equation*}
    {:}\exp\!:\left(\sum_{i\in\Ii}\del_{u_i}\otimes L_i^\l\right)(\ga(b)\otimes \ga(a))=\ga(x)\otimes\ga(y),
    \end{equation*}
    then $I_{\l,\h,\star}(a,b)=\ga^{-1}(\ga(x)\ga(y))=x\star_\ga y$, where $\star_\ga$ is the Gutt star-product defined in Example \ref{es gutt star-product}.
\end{oss}

\begin{theorem}\label{teo sum h star bracket general case}
    For all $a,b\in\Ss(R)$, the chiral star-product $I_{\l,\h,\star}$ has the following expression:
    \begin{equation}\label{eq sum h star bracket general case}
       \begin{aligned}
            I_{\l,\h,\star}(a,b)&=m_{\star_\ga}\circ\exp(\sum_{i\in\Ii}\del_{u_i}\x (D_i^{-\l-\del-\h}-\Ll^\star_{i}))(b\x a)
            \\
            =&\sum_{k=0}^\infty\frac{1}{k!} \sum_{\substack{i_1\dots i_k\in\Ii}}\frac{\del^k b}{\del u_{i_1}\dots \del u_{i_k}}\star_\ga  (D^{-\l-\del-\h}_{i_k}-\Ll^\star_{i_k})\cdots (D^{-\l-\del-\h}_{{i_1}}-\Ll_{i_1}^\star)(a),
       \end{aligned}
    \end{equation}
    where $\star_\gamma$ denotes the Gutt star-product, $\Ll_i^\star$ the left multiplication operator $u_i\star_\ga-$, and $D^\l_j$ for $j\in\Ii$ is the operator
    \begin{equation}        
    \begin{aligned}
        D^\l_j(a):=\sum_{k=0}^\infty\, \frac{1}{k!}\sum_{\substack{i_1\dots i_k\in\Ii}}&\left(\frac{\del^k a}{\del u_{i_1}\dots \del u_{i_k}}\right)\star_\ga\left( \sum\nolimits^\l_{-\del-\h} \d \l_1\sum\nolimits^{\l_1+\h}_{-\del-\h} \d \l_2\dots \right. \\
        &\left.\dots\sum\nolimits^{\l_{k-1}+\h}_{-\del-\h} \d \l_k \{\cdots\{\{u_{j \ \l_1} u_{i_1}\}_{\h \,\l_2} u_{i_2}\}_\h\dots _{\l_k}u_{i_k}\}_\h \right).
    \end{aligned}
    \end{equation}
\end{theorem}
\begin{proof}
    It is sufficient to prove \eqref{eq sum h star bracket general case} when $a$ and $b$ are PBW monomials, say of degree $m$ and $n$ respectively. Denote $b=b_1\cdots b_n$, with $b_i\in \Bb$, and
    \begin{equation*}
         \ga(b)=\frac{1}{n!}\sum_{\s\in S_n}b_\s,
    \end{equation*}
    with $b_\s:=b_{\s(1)}\cdots b_{\s(n)}\in \UUU(R_L)$. Then
    \begin{equation*}
        I_{\l,\h,\star}(a,b)=\ga^{-1}\left(\frac{1}{n!}\sum_{\s\in S_n}\hat{I}_{\l,\h}(\ga(a),b_\s) \right).
    \end{equation*}
    We compute $\hat{I}_{\l,\h}(\ga(a),b_\s)$ using \eqref{eq S operator}:
    \begin{equation*}
        \hat{I}_{\l,\h}(\ga(a),b_\s)=\sum_{k=0}^\infty\, \sum_{\substack{1\leq i_1<\dots<i_k\leq n}}\frac{\delta^k b_\s}{\delta {i_1}\dots \delta {i_k}}  L^\l_{b_{\s(i_k)}}\circ\dots\circ L^\l_{b_{\s(i_1)}}(\ga(a)).
    \end{equation*}
    Thus
    \begin{align*}
        I_{\l,\h,\star}(a,b)&=\ga^{-1}\sum_{k=0}^\infty\!\!\sum_{\substack{i_1\neq\cdots\neq i_k \\
        i_1,\dots,i_k\in\{1,\dots,n\}}}\left(\frac{1}{n!}\!\!\!\!\!\sum_{\substack{\s\in S_n \\ \s(i_1)<\dots<\s(i_k)} }\!\!\!\frac{\delta^k b_{\s^{-1}}}{\delta \s(i_1)\dots \delta {\s(i_k)}}\right) L^\l_{b_{i_k}}\circ\dots\circ L^\l_{b_{i_1}}(\ga(a)).
    \end{align*}
    Notice that 
    \begin{equation*}
        \frac{1}{n!}\!\!\!\!\!\sum_{\substack{\s\in S_n \\ \s(i_1)<\dots<\s(i_k)} }\!\!\!\frac{\delta^k b_{\s^{-1}}}{\delta \s(i_1)\dots \delta {\s(i_k)}}=\frac{1}{k!}\ga\left(\frac{\delta^k b}{\delta {i_1}\dots \delta {i_k}}\right),
    \end{equation*}
    so we get 
    \begin{equation}\label{eq proof sum h star bracket 0}
    \begin{aligned}
        I_{\l,\h,\star}(a,b)&= \ga^{-1}\sum_{k=0}^\infty\, \frac{1}{k!}\!\!\!\sum_{\substack{i_1\neq\cdots\neq i_k \\
        i_1,\dots,i_k\in\{1,\dots,n\}}}\!\!\!\ga\left(\frac{\d^k b}{\d {i_1}\dots \d {i_k}}\right) L^\l_{b_{i_k}}\circ\dots\circ L^\l_{b_{i_1}}(\ga(a))
        \\
        &=\gamma^{-1}\circ(\ga\x1)\circ\exp(\sum_{i=1}^n \frac{\d}{\d i}\x L_{b_i}^\l)(b\x\ga(a))
        \\        &=\gamma^{-1}\circ(\ga\x1)\circ\exp(\sum_{i\in\Ii} \frac{\del}{\del u_i}\x L_{i}^\l)(b\x\ga(a))
        \\
        &=\ga^{-1}\sum_{k=0}^\infty\, \frac{1}{k!}\sum_{\substack{i_1\dots i_k\in\Ii}}\ga\left(\frac{\del^k b}{\del u_{i_1}\dots \del u_{i_k}}\right) L^\l_{{i_k}}\circ\dots\circ L^\l_{{i_1}}(\ga(a)).
    \end{aligned}
    \end{equation}
    The exponentials here are the usual exponentials of non-commutative variables.\\ By \eqref{eq operatori Sl}, for every $x\in\Ss(R)$ and $j\in\Ii$,
    \begin{equation*}
        \ga^{-1}L^\l_{j}(\ga(x))=I_{-\l-\del-\h,\h,\star}(u_j,x)-u_j\star_\ga x.
    \end{equation*}
    As explained in Remark \ref{oss symmetric part of the quantization}, the only thing left to prove is that $I_{\l,\h,\star}(u_j,a)=D^\l_j(a)$. By \eqref{eq proof sum h star bracket 0} it follows that
    \begin{equation*}
        \hat{I}_{\l,\h}(u_j,\ga(a))=\sum_{k=0}^\infty\,\frac{1}{k!} \sum_{\substack{i_1\dots i_k\in\Ii}}\ga\left(\frac{\del^k a}{\del u_{i_1}\dots \del u_{i_k}}\right)  L^\l_{i_k}\dots L^\l_{i_1}(u_j),
    \end{equation*}
    where $L^\l_{i_k}\dots L^\l_{i_1}(u_j)\in R[\l]$. So, by Remark \ref{oss symmetric part of the quantization},
    \begin{equation}\label{eq proof sum h star bracket 1}
        I_{\l,\h,\star}(u_j,a)=\sum_{k=0}^\infty\,\frac{1}{k!} \sum_{\substack{i_1\dots i_k\in\Ii}}\frac{\del^k a}{\del u_{i_1}\dots \del u_{i_k}} \star_\ga L^\l_{i_k}\dots L^\l_{i_1}(u_j).
    \end{equation}
    The $\h$-bracket of elements of $R$ is the same in $\Ss(R)$ and $V(R)$, so, for all $i\in\Ii$
    \begin{equation*}
        L^\l_{i}(u_j)=\psi^{-1}\sum\nolimits^\l_{-\del-\h}\br[x]{u_j \,}{u_{i}}\d x=\sum\nolimits^\l_{-\del-\h}\pbr[x]{u_j \,}{u_{i}}\d x .
    \end{equation*}
    We can expand the composition of two operators $L^\l_{i_2}$ and $L^\l_{i_1}$, using the sesquilinearity property of the sum $\h$-bracket and the change of order of summation (Proposition \ref{prop change order summation}):
    \begin{equation}\label{eq proof sum h star bracket 2}
        \begin{aligned}
            L^\l_{i_2}L^\l_{i_1}(u_j)&=\sum\nolimits^\l_{-\del-\h}\d\l_1\pbr[\l_1]{\left(\sum\nolimits^\l_{-\del-\h}\d\l_2\pbr[\l_2]{u_j \,}{u_{i_1}}\right)  \,}{u_{i_2}} \\
            &=\sum\nolimits^\l_{-\del-\h}\d\l_1\sum\nolimits^\l_{\l_1}\d\l_2\pbr[\l_1]{\pbr[\l_2]{u_j \,}{u_{i_1}}  \,}{u_{i_2}} \\
            &=\sum\nolimits^\l_{-\del-\h}\d\l_2\sum\nolimits^{\l_1+\h}_{-\del-\h}\d\l_1\pbr[\l_1]{\pbr[\l_2]{u_j \,}{u_{i_1}}  \,}{u_{i_2}}.
        \end{aligned}
    \end{equation}
    Putting together \eqref{eq proof sum h star bracket 1} and \eqref{eq proof sum h star bracket 2} we get $I_{\l,\h,\star}(u_j,a)=D^\l_j(a)$, completing the proof.
\end{proof}

\begin{cor}\label{cor star sum h bracket caso free field}
    If $V(R)$ is a free-field vertex algebra, then 
    \begin{equation}\label{eq star sum h bracket caso free field}
        I_{\l,\h,\star}(a,b)=m_{\star_\ga}\circ\sigma\circ\exp(\sum_{i\in\Ii}L_i^\l\otimes \del_{u_i})(a\otimes b),
    \end{equation}
    where $\sigma$ is the operator $x\otimes y\mapsto y\otimes x$, and
    \begin{equation}\label{eq Sl operator caso free field}
        L_i^\l(a)=\sum_{j\in\Ii}\sum\nolimits^{\l}_{-\del-\h}\{u_j\,_{x+\del} \, u_i\underrightarrow{\}_\h} \pdv{a}{u_j}\, \d x.
    \end{equation}
   More explicitly:
    \begin{equation}\label{eq star sum h bracket caso free field esplicito}
        \begin{aligned}
&I_{\lambda,\h,\star}(a,b)=\sum_{k=0}^\infty \ \
\sum_{\substack{i_1,j_1,\dots,i_k,j_k\in \Ii}}
\frac{\partial^k b}{\partial {u_{j_1}}\dots\partial {u_{j_k}}}\star_\ga
\\
&\Big(\sum\nolimits_{\l_{k+1}-\h}^{\lambda}\d\lambda_1\sum\nolimits_{\l_{k+1}-\h}^{\lambda_1}\d\lambda_2\dots\sum\nolimits_{\l_{k+1}-\h}^{\lambda_{k-1}}\d{\lambda_k}\prod_{s=1}^k\pbr[\l_s-\l_{s+1}]{u_{i_s}}{u_{j_{s}}}\Big)\Big{(}\restr{}{\l_{k+1}=-\del}
\frac{\partial^k a}{\partial {u_{i_1}}\dots\partial {u_{i_k}}}\Big)
\,.
\end{aligned}
    \end{equation}
\end{cor}
\begin{proof}
    Equations \eqref{eq star sum h bracket caso free field} and \eqref{eq Sl operator caso free field} follow from putting together Theorem \ref{teo sum h star bracket general case} and Proposition \ref{prop cap bracket free field case}. Formula \eqref{eq star sum h bracket caso free field esplicito} follows from iterating \eqref{eq Sl operator caso free field} and by the fact that $\pbr{u_i \,}{u_j}\in\CC[\l] $ for all $i,j\in\Ii$.
\end{proof}

\begin{oss}
    Formulae \eqref{eq sum h star bracket general case} and \eqref{eq star sum h bracket caso free field} for the free-field case consists of an exponential of a bidifferential operator, which is similar to the Moyal-Weyl and Gutt star-products (see Examples \ref{es Moyal Weyl} and \ref{es gutt star-product}).
\end{oss}

\begin{es}
    In the case of $\beta\gamma$-systems, we can see explicitly how the chiral star-product $I_{\l,\h,\star}$ reduces to the Moyal-Weyl star-product in the Zhu algebra. First of all, notice that the Gutt star-product $\star_\gamma$ in the Zhu algebra reduces to the Moyal-Weyl star-product. Denote by $\pi$ the Poisson bivector of the Zhu algebra and let $a\star_\h b:=I_{0,\h,\star}(a,b)$; from equation \eqref{eq star sum h bracket caso free field} and by expanding the formula for the Moyal-Weyl star-product  \eqref{eq moyal weyl star-product}, it follows that
    \begin{equation*}
        a\star_\h b\equiv m\circ\exp(\pi/2)\circ\sigma\circ\exp(\sum_{i\in\Ii}L_i^0\otimes \del_{u_i})(a\otimes b) \mod \Jj_\h.
    \end{equation*}
    Moreover, by \eqref{eq Sl operator caso free field} and Theorem \ref{teo definite sum equal sum},
    \begin{equation*}
        \begin{aligned}
            L^0_i(a)&\equiv\sum_{j\in\Ii}\pdv{a}{u_j}\sum\nolimits^{0}_{-\h}\pbr[x]{u_j \,}{u_i}\, \d x \mod \Jj_\h \\
            &=\sum_{j\in\Ii}\pdv{a}{u_j}\pbr[-\h]{u_j \,}{u_i} \mod \Jj_\h.
        \end{aligned}
    \end{equation*}
    Since the Poisson bracket in the Zhu algebra is induced by $\pbr[-\h]{\cdot}{\cdot}$,
    \begin{align*}
        a\star_\h b&\equiv m\circ\exp(\pi/2)\circ\sigma\circ\exp(\pi)(a\otimes b) \mod \Jj_\h \\
        &=m\circ\sigma\circ\exp(\pi/2)(a\otimes b) \mod \Jj_\h.
    \end{align*}
    Since $m$ is commutative, we can get rid of $\s$, getting back the formula for the Moyal-Weyl star-product.
\end{es}

\subsection{Star-deformations for Poisson vertex algebras: explicit formulae}\label{sec:explicitstar}

A (Poisson) vertex algebra is an $\h$-(Poisson) vertex algebra under the limit $\h\rightarrow0$. So, our formulae for $I_{\l,\h,\star}$ can be specialized to obtain formulae for the star-deformation of the corresponding Poisson vertex algebras. 

\begin{theorem}\label{teo star integral l bracket}
    The following formulae describe the star-deformation of $\Ss(R)$ induced by the symmetrization map:
    \begin{equation}\label{eq:stardeformationgeneral}
        \begin{aligned}
            I_{\l,\star}(a,b)&=m_{\star_\ga}\circ\exp(\sum_{i\in\Ii}\del_{u_i}\x (D_i^{-\l-\del}-\Ll^\star_{i}))(b\x a)
            \\
            &=\sum_{k=0}^\infty\frac{1}{k!} \sum_{\substack{i_1\dots i_k\in\Ii}}\frac{\del^k b}{\del u_{i_1}\dots \del u_{i_k}}\star_\ga  (D^{\l-\del}_{i_k}-\Ll^\star_{i_k})\dots (D^{\l-\del}_{{i_1}}-\Ll_{i_1}^\star)(a),
        \end{aligned}
    \end{equation}
    for all $a,b\in\Ss(R)$, where $\star_\gamma$ denotes the Gutt star-product \eqref{eq BCH Gutt star-product}, $\Ll_i^\star$ the left multiplication operator by $u_i$ with respect to $\star_\ga$, and $D^\l_j$ for $j\in\Ii$ is the operator:
    \begin{equation}        
    \begin{aligned}
        D^\l_j(a):=\sum_{k=0}^\infty\, \frac{1}{k!}\sum_{\substack{i_1\dots i_k\in\Ii}}&\left(\frac{\del^k a}{\del u_{i_1}\dots \del u_{i_k}}\right)\star_\ga\left( \int^\l_{-\del} d \l_1\int^{\l_1}_{-\del} d \l_2\dots \right. \\
        &\left.\dots\int^{\l_{k-1}}_{-\del} d \l_k \{\cdots\{\{u_{j \ \l_1} u_{i_1}\}_{\l_2} u_{i_2}\}\dots _{\l_k}u_{i_k}\} \right).
    \end{aligned}
    \end{equation}
\end{theorem}
\begin{proof}
    It follows directly from Theorem \ref{teo sum h star bracket general case}, by sending $\h\rightarrow0$.
\end{proof}

\begin{cor}\label{cor:stardeformationfreefield}
    If $R_L$ is an abelian Lie algebra, then
    \begin{equation}\label{eq star integral l bracket free field case}
        I_{\l,\star}(a,b)=m\circ\exp(\sum_{i\in\Ii}L_i^\l\otimes \del_{u_i})(a\otimes b),
    \end{equation}
    where $m$ is the multiplication map and
    \begin{equation}
        L_i^\l(a)=\sum_{j\in\Ii}\int^{\l}_{-\del}\pdv{a}{u_j}\pbr[x+\del^{(1)}][]{u_j \,}{u_i}\, d x.
    \end{equation}
   More explicitly:
    \begin{equation}
        \begin{aligned}
&I_{\lambda,\star}(a,b)=\sum_{k=0}^\infty \ \
\sum_{\substack{i_1,j_1,\dots,i_k,j_k\in \Ii}}
\frac{\partial^k b}{\partial {u_{j_1}}\dots\partial {u_{j_k}}}\cdot
\\
&\left(\int_{\l_{k+1}}^{\lambda}d\lambda_1\int_{\l_{k+1}}^{\lambda_1}d\lambda_2\dots\int_{\l_{k+1}}^{\lambda_{k-1}}d{\lambda_k}\prod_{s=1}^k\{{u_{i_s}}\,_{\lambda_s-\lambda_{s+1}}u_{j_s}\}\right)\left(\restr{}{\l_{k+1}=-\del}
\frac{\partial^k a}{\partial {u_{i_1}}\dots\partial {u_{i_k}}}\right)
\,.
\end{aligned}
    \end{equation}
\end{cor}
\begin{proof}
     If $R_L$ is abelian, then the map $\ga$ in \eqref{diagram factor quantization map} is the identity. This implies that $I_{\l,\star}=\hat{I}_{\l,\h |_{\h\rightarrow0}}$. All the properties used in the proof of Proposition \ref{prop cap bracket free field case} are true in the case where $R_L$ is abelian. So  \eqref{eq star integral l bracket free field case} follows from \eqref{eq cap bracket free field case}.
\end{proof}

\begin{oss}
    If $V(R)$ is a free field vertex algebra, then $R_L$ is an abelian Lie algebra. This is not true if $\h\neq 0$, because
    \begin{equation*}
        [a,b]=\sum\nolimits^0_{-\del-\h}\br[x]{a}{b}\d x=\sum\nolimits^0_{-\h}\br[x]{a}{b}\d x\neq 0.
    \end{equation*}
\end{oss}

\begin{oss}
    In \cite{barakatPoissonVertexAlgebras2009}, the authors derive an explicit, closed formula for a Poisson vertex algebra structure on an algebra of differential polynomials, which they call the ``Master Formula''. The formulae in Theorem \ref{teo star integral l bracket} are a quantization of the Master Formula. 
\end{oss}

\begin{appendices}

\section{Calculus of formal distributions}\label{sec:formaldistributions}

\begin{defi}

    Let $U$ be a vector space. A $U$-valued formal distribution in the variable $z_1,\dotso,z_k$ is a bilateral power series $a(z_1,\dotso,z_k)\in U[[z_1^{\pm1},\dotso,z_k^{\pm1}]] $
    \begin{equation}        a(z_1,\dotso,z_k)=\sum_{i_1\dots i_k\in\ZZ}a_{i_1,\dotso,i_k}z_1^{i_1}\dotso z_k^{i_k}.
    \end{equation} 
    For a one-variable formal distribution $a(z)\in U[[z,z^{-1}]]$, we use the notation
\begin{equation}\label{eq:formaldistributiononevariable}
    a(z)=\sum_{n\in\ZZ}a_{(n)}z^{-n-1}, \hspace{10 mm} a(z)_+=\sum_{n<0}a_{(n)}z^{-n-1}, \hspace{10 mm} a(z)_-=\sum_{n\geq0}a_{(n)}z^{-n-1}.
\end{equation}

\end{defi}

\begin{oss}
   Assume that $U,V,W$ are vector spaces with a product $U\otimes V\rightarrow W$. It is always possible to extend it formally to a product on the spaces of distributions 
   $$U[[z,z^{-1}]]\otimes V[[w,w^{-1}]]\rightarrow W[[z^{\pm1},w^{\pm1}]]. $$ If $z=w$ though, the product is in general ill-defined, as the coefficient of one of the terms $z^{i}$ may be an infinite series.  There are some cases in which the products are always defined, for example, the product of a formal distribution and a Laurent polynomial or the product of two formal Laurent series.  
\end{oss}

The formal derivative on $U[[z,z^{-1}]]$ is defined as 
\begin{equation}\label{def formal derivative}
    \del_z a(z)=-\sum_{n\in\ZZ}na_{(n-1)}z^{-n-1}.
\end{equation}
The formal residue is, by definition, the coefficient of the $z^{-1}$ term:
\begin{equation}
    \Res_z a(z)=a_{(0)}.
\end{equation}
In particular, this implies:
\begin{equation}
    a_{(n)}=\Res_zz^na(z), \quad \forall n\in\ZZ.
\end{equation}
This motivates the seemingly unnatural indexing in \eqref{eq:formaldistributiononevariable}. From \eqref{def formal derivative} we get a short exact sequence:

\begin{equation}
   \begin{tikzcd}
0 \arrow[r] & U \arrow[r] & {U[[z,z^{-1}]]} \arrow[r, "\del_z"] & {U[[z,z^{-1}]]} \arrow[r, "\Res_z"] & U \arrow[r] & 0.
    \end{tikzcd}
\end{equation}

\paragraph{Integration by parts} Let $a(z),b(z)\in A((z))$ be formal Laurent series with coefficients in an algebra $A$. The product $a(z)\cdot b(z)$ is well-defined and the formal derivative $\del_z$ acts as a derivation. Since $\Res_z(\del_z (a(z)\cdot b(z)))=0$, we must have
\begin{equation}
    \Res_z(\del_z a(z)\cdot b(z))=-\Res_z (a(z)\cdot\del_z b(z)),  
\end{equation}
which corresponds to the integration by parts formula of the analytical residue.

\paragraph{Change of variables} Let $g(w)\in U((w))$ and $f(z)\in \CC[[z]]$ such that $g(w)=\sum_{n\geq N}a_n w^n$ and $f(z)=\sum_{n\geq 1}f_n z^n$,  with $f_1\neq0$. Then the composition $g(f(z))\in U((z))$ is well-defined as $g(f(z))=\sum_{n\geq N}a_n(f(z))^n$. In fact, $f(z)^n\in z^n\CC[[z]]$ for all $n\geq0$, so $\sum_{n\geq0}a_n(f(z))^n$ converges in the topology of $U[[z]]$. The usual change of variables formula applies:
\begin{equation}\label{eq:changeofvariabelRes}
    \Res_w g(w)=\Res_z (g(f(z))\del_z f(z)).
\end{equation}
In fact, we can write $g(w)=a_{-1}w^{-1}+\del_z A(w)$ for some $A(w)\in U((w))$, so
\begin{align*}
    \Res_z (g(f(z))\del_z f(z))=a_{-1}\Res_z (f(z)^{-1}\del_z f(z))+\Res_z \del_z (A(f(z))=a_{-1},
\end{align*}
because $\Res_z(f(z)^{-1}\del_z f(z))=1$ by our assumptions on $f(z)$. 

There are natural embeddings of the spaces of Laurent series $\CC((z))((w))$ and $\CC((w))((z))$ into $\CC[[z^{\pm1},w^{\pm1}]]$. Consider now the element $(z-w)$: it is invertible in both $\CC((z))((w))$ and $\CC((w))((z))$, but the two inverses are different in $\CC[[z^{\pm1},w^{\pm1}]]$. This defines two different embeddings:
\begin{equation}\label{def espansione z>w}
        i_{z,w}:\CC((z-w))\hookrightarrow\CC((z))((w))\subset\CC[[z^{\pm1},w^{\pm1}]]  \ \ \ \frac{1}{(z-w)}\mapsto \sum_{n\geq0}w^nz^{-n-1}, 
\end{equation}
\begin{equation}\label{def espansione w>z}
    i_{w,z}:\CC((z-w))\hookrightarrow\CC((w))((z))\subset\CC[[z^{\pm1},w^{\pm1}]] \ \ \ \frac{1}{(z-w)}\mapsto -\sum_{n<0}w^nz^{-n-1}.
\end{equation}
From an analytical point of view, they correspond to series expansions in the domains $|z|>|w|$ and $|w|>|z|$, respectively.

\begin{defi}\label{def formal delta}
    The formal delta function is the two variables formal distribution $\delta(z-w)\in\CC[[z^{\pm1},w^{\pm1}]]$ defined as:
    \begin{equation}\label{eq:defformaldelta}
        \delta(z-w)=(i_{z,w}-i_{w,z})\left(
        \frac{1}{z-w}\right)=\sum_{n\in\ZZ}w^nz^{-n-1}.
    \end{equation}
\end{defi}

For all $j\in\NN$, the $j$-th derivative of the formal delta function is given by:
\begin{equation}\label{eq:derivativedelta}
    \frac{1}{j!}\del_w^{j}\delta(z-w)=(i_{z,w}-i_{w,z})\left(\frac{1}{(z-w)^{j+1}}\right)=\sum_{n\in\ZZ}\binom{n}{j}z^{-n-1}w^{n-j}.
\end{equation}

The following is a well-known property of the delta function.

\begin{prop}
    For all formal distributions $f(z)\in U[[z,z^{-1}]]$ the product \\ $f(z)\delta(z-w)$ converges and 
    \begin{equation}
        \Res_zf(z)\delta(z-w)=f(w).
    \end{equation}
\end{prop}
A fundamental property in the theory of vertex algebras is locality.

\begin{defi}
    A formal distribution in two variables $a(z,w)\in U[[z^{\pm1},w^{\pm1}]]$ is called local if there is a number $N\in\NN$ such that $(z-w)^N a(z,w)=0$.
\end{defi}
It is clear by equation \eqref{eq:derivativedelta} that $\del^n_w\delta(z-w)$ is local, with $N=n-1$. The following theorem, due to Kac, provides a complete characterization of local formal distributions.

\begin{theorem}[Decomposition theorem, {\cite[Theorem 1.2]{kacIntroductionVertexAlgebras2017}}]\label{theorem decomposition}
    For all $j\geq0$, the derivatives of the delta function $\del_w^{j}\delta(z-w)$ are local.
    Any local formal distribution $a(z,w)\in U[[z^{\pm1},w^{\pm1}]] $ can be uniquely decomposed as a finite sum of derivatives of the formal delta function, with formal distributions in $w$ as coefficients:
\begin{equation}
    a(z,w)=\sum_{j=0}^{N-1}\frac{1}{j!}c^j(w)\del_w^{(j)}\delta(z-w),
\end{equation}
where 
\begin{equation}
    c^j(w)=\Res_za(z,w)(z-w)^j\in U[[w,w^{-1}]].
\end{equation}
and $N$ is such that $(z-w)^N a(z-w)=0$.
\end{theorem}
We consider now the case where $U$ has some additional algebraic structure, namely it is a Lie algebra or an associative algebra. 

\begin{defi}
    Let $\g$ be a Lie algebra, and $a(z),b(z)$ two $\g$-valued formal distributions. Then $a(z),b(z)$ are called mutually local (or simply local) if the formal distribution $[a(z),b(w)]\in\g[[z^{\pm1},w^{\pm1}]] $ is local.
\end{defi}

\begin{defi}
    Let $A$ be an associative algebra, and $a(z),b(z)$ two $A$-valued formal distributions. The $(n)$-th product between $a(z),b(z)$ is defined, for all $n\in\ZZ$, as
    \begin{equation}\label{def n products fields}
        a(w)_{(n)}b(w)= \Res_z(i_{z,w}(z-w)^na(z)b(w)-i_{w,z}(z-w)^nb(w)a(z)).
    \end{equation}
    If $n\in\NN$, formula \eqref{def n products fields} becomes
    \begin{equation}
        a(w)_{(n)}b(w):= \Res_z(z-w)^n[a(z),b(w)].
    \end{equation}
\end{defi}

\begin{oss}
    The special case of the $(-1)$-product is denoted by
    \begin{equation}
        a(z)_{(-1)}b(z)=\, {:}a(z)b(z){:}\, ,
    \end{equation}
    and it is known as the normally ordered product. From \eqref{def espansione z>w} and
    \eqref{def espansione w>z} it follows that the normally ordered product can also be written as
    \begin{equation}\label{def normally ordered product fields}
         {:}a(z)b(z){:}\,=a(z)_+b(z)+b(z)a(z)_-.
    \end{equation}
\end{oss}

\begin{oss}\label{oss del derivation n product fields}
    We can compute $(\del_wa(w))_{(n)}b(w)$ by integrating by part the residue in \eqref{def n products fields}:
    \begin{equation}\label{eq del derivation n product fields}
        (\del_wa(w))_{(n)}b(w)=-na(w)_{(n-1)}b(w).
    \end{equation}
    By the Leibniz rule and \eqref{def n products fields} \eqref{eq del derivation n product fields},
    \begin{equation}
        \begin{aligned}
            \del_w(a(w)_{(n)}b(w))&=a(w)_{(n)}\del_wb(w)-na(w)_{(n-1)}b(w)\\ &=a(w)_{(n)}\del_wb(w)+(\del_wa(w))_{(n)}b(w),
        \end{aligned}
    \end{equation}
    hence $\del_w $ is a derivation of all $(n)$-products, for $n\in\ZZ$.
\end{oss}

\begin{theorem}[{\cite[Theorem 2.3]{kacVertexAlgebrasBeginners1998}}]\label{teo OPE}
    Let $A$ be an associative algebra, and $a(z),b(z)$ two $A$-valued formal distributions, mutually local. Then
    \begin{equation}\label{eq OPE fields}
            a(z)b(w)=\sum_{j=0}^{N-1}\frac{a(w)_{(j)}b(w)}{i_{z,w}(z-w)^{j+1}}+{:} a(z)b(z) {:} \,.
    \end{equation}
\end{theorem}

\begin{defi}
    A (quantum) field on a vector space $U$ is an $\End(U)$-valued formal distribution $a(z)$ such that, for all $b\in U$, there exists an $N\in\NN$ such that $a_{(n)}b=0$, for all $n\geq N$, i.e. $a(z)b\in U((z))$ for all $a,b\in U$.
\end{defi}

\begin{oss}
\begin{enumerate}[(i)]
    \item If $a(z)$ is a field, then by \eqref{def formal derivative} $\del_za(z)$ is still a field. 
    \item  If $a(z),b(z)$ are fields, then $a(z)b(w)v$ and $b(w)a(z)v$ are elements of the space $V((z))((w))$ and $V((w))((z))$ respectively, for all $v\in V$. By taking the residue in $z$, we get an element in $V((w))$. This implies that the $(n)$-product of fields is again a field for all $n\in\ZZ$.
\end{enumerate}  
\end{oss}

\begin{lemma}[Dong's Lemma]\label{lemma Dong}
    Let $a(z),b(z),c(z)$ be pairwise mutually local fields. Then $a(z)_{(n)}b(z)$ and $c(z)$ are mutually local fields for all $n\in\ZZ$.
\end{lemma}
\begin{proof}
    See \cite[Lemma 3.2]{kacVertexAlgebrasBeginners1998}.
\end{proof}

\section{Calculus of finite differences}\label{sec:finitedifferences}

Let $A$ be a commutative ring containing $\ZZ$, and $\h\in A$ a nonzero element (e.g. $A=\CC$ and $\h=1$, or $A=\CC[\h]$). All the functions in this section are functions $A\rightarrow A$, and the operators are defined on this function space.

\begin{defi}\label{def:finitedifference}
    Let $S_\h$ be the shift operator, defined by
    $$S_\h[f](x):=f(x+\h). $$
    Then the finite difference operator $\Delta_\h$ is defined as $(S_\h-I)/\h$, where $I$ is the identity operator, i.e.
    $$\Delta_\h[f](x):=\frac{f(x+\h)-f(x)}{\h}. $$
\end{defi}
It is clear by definition that $\Delta_\h$ is a linear operator, and that $\Delta_\h\rightarrow\del_x$ as $\h\rightarrow0$. Denoting by $x$ the operator multiplication by $x$, the following commutation relation holds:
\begin{equation}\label{eq commutator difference}
    [\Delta_\h, x\, S_\h^{-1}]=I.
\end{equation}
Under the limit $\h\rightarrow0$, this becomes the usual commutation relation $[\del_x, x]=I$. Because of relation \eqref{eq commutator difference}, a large number of formal relations of standard differential calculus involving functions $f(x)$ map systematically to discrete analogs involving $f(xS^{-1}_\h)$.
\begin{defi}\label{def pochhammer symbols}
    The falling factorial (also known in the literature as Pochhammer symbol) is defined, for $n\in\NN$, as
    \begin{equation}\label{def falling factorial}
        (x)_n:=x(x-1)\dots (x-(n-1))=\prod_{i=0}^{n-1}{(x-i)}. 
    \end{equation}
    If $x\in\NN$, then the falling factorial is related to the binomial coefficient by
    \begin{equation}\label{def binomial coefficients}
      \frac{(x)_n}{n!} =\binom{x}{n}.
    \end{equation}
    We can use \eqref{def binomial coefficients} as a definition to extend the binomial coefficient to all $x\in\CC$, $n\in\NN$. \\
    The $k$-Pochhammer symbol or $k$-falling factorial is a generalization of \eqref{def falling factorial}, depending on a formal parameter $k$. To be consistent with our notation, we will use $\h$ in place of $k$ and define:
    \begin{equation}\label{def k pochhammer symbol}
        (x)_{n,\h}:=x(x-\h)\dots (x-(n-1)\h)=\prod_{i=0}^{n-1}{(x-i\h)}=\h^n(x/\h)_n. 
    \end{equation}
    By a slight abuse of notation, we will use the terminology falling factorial to refer to both \eqref{def falling factorial} and \eqref{def k pochhammer symbol}.
\end{defi}
Since $(x)_{n,\h}=(xS_\h^{-1})^n(1)$, we can consider the falling factorial a discrete analog of the monomial $x^n$. In particular, from \eqref{eq commutator difference} we get 
\begin{equation}\label{eq difference pochhammer symbol}
    \Delta_\h [(x)_{n,\h}]=\Delta_\h[(xS_\h^{-1})^n(1)]=n(xS_\h^{-1})^{n-1}(1)=n(x)_{n-1,\h}.
\end{equation}
Notice also that
\begin{equation}
    \frac{(x)_{n,\h}}{n!}=\h^n\frac{(x/\h)_n}{n!},
\end{equation}
The following are some useful formulae when dealing with falling factorials:
\begin{equation}
    (x)_{n,\h}=x(x-\h)_{n-1,\h}=(x-\h(n-1))(x)_{n-1,\h},
\end{equation}
\begin{equation}\label{eq falling factorial sum exponents}
    (x)_{n+m,\h}=(x)_{n,\h}(x-n\h)_{m,\h},
\end{equation}
\begin{equation}\label{eq (-1)^n Pochhammer symbol}
    (-1)^n(x)_{n,\h}=(-x+(n-1)\h)_{n,\h}.
\end{equation}
\\
We consider next the sum operator, which is a discrete analogue of the integral.
\begin{defi}
    The indefinite sum operator of a function $g$ is defined as 
    $$\sum g(x)\delta x := \{f \, | \, \Delta_\h[f](x)=g(x)\}. $$
    Take $f\in\sum g(x)\delta x$ and $a,b\in A$ such that $a-b\in\h\ZZ$. The definite sum of $g$ from $a$ to $b$ is defined as
    \begin{equation}\label{def definite sum}
        \sum\nolimits_{a}^{b} {g(x) \delta x}:= f(b)-f(a). 
    \end{equation}
\end{defi}
The definition of the definite sum is well posed due to the following theorem
\begin{theorem}[{\cite[(2.46)]{grahamConcreteMathematicsFoundation1994}}]\label{thm:fundamentalfinitedifference}
If $f$ and $g$ are two functions such that $\Delta_\h[f]=\Delta_\h[g]$, then $f=g+C$, where $C$ is a $\h$-periodic function. 
\end{theorem}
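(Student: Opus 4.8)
The plan is to reduce immediately to understanding the kernel of $\Delta_\h$. Set $C := f - g$. Since $\Delta_\h = (S_\h - I)/\h$ is manifestly $A$-linear, the hypothesis $\Delta_\h[f] = \Delta_\h[g]$ gives $\Delta_\h[C] = 0$. Thus it suffices to prove the equivalent statement: a function $C\colon A\to A$ satisfies $\Delta_\h[C]=0$ if and only if $C$ is $\h$-periodic, i.e.\ $C(x+\h)=C(x)$ for all $x\in A$.

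This equivalence is just an unwinding of definitions. By definition, $\Delta_\h[C](x) = \bigl(C(x+\h) - C(x)\bigr)/\h$. Since $\h$ is assumed to be a nonzero element of the commutative ring $A\supseteq\ZZ$, one should be slightly careful: in full generality one only concludes that $\h\cdot\bigl(C(x+\h)-C(x)\bigr)=0$, so the cleanest route is to phrase $\Delta_\h$ via the shift operator, $\Delta_\h[C]=0 \iff S_\h[C] - C = 0 \iff S_\h[C] = C$, which is exactly the $\h$-periodicity condition $C(x+\h)=C(x)$ and involves no division at all. In the settings we actually use — $A=\CC$ or $A=\CC[\h]$, acting on polynomial functions — this is of course immediate since $\h$ is not a zero divisor. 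I would include the remark that, in those cases, $\h$-periodicity of a \emph{polynomial} forces $C$ to be constant, but the general statement as phrased only asserts periodicity.

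Finally, applying the equivalence to $C = f-g$ yields that $f - g$ is $\h$-periodic and hence $f = g + C$ with $C$ an $\h$-periodic function, as claimed. There is no real obstacle here: the only point requiring a modicum of attention is whether one wants to argue through the division by $\h$ in the definition of $\Delta_\h$ or, more robustly, through the shift-operator reformulation $\Delta_\h = (S_\h - I)/\h$, and I would adopt the latter to keep the argument valid over an arbitrary base ring $A$.
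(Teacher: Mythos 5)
Your proof is correct. Note that the paper does not actually prove this statement at all: it is quoted as a known result, with a citation to Graham--Knuth--Patashnik, equation (2.46), so there is no in-paper argument to compare against. Your argument --- reduce by linearity to the kernel of $\Delta_\h$, then observe via the shift-operator form $\Delta_\h=(S_\h-I)/\h$ that $\Delta_\h[C]=0$ is literally the condition $C(x+\h)=C(x)$ --- is the standard one, and your remark about avoiding division by $\h$ over a general base ring $A$ is a reasonable refinement consistent with the paper's later observation that for polynomial functions the only $\h$-periodic functions are constants.
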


\begin{oss}
    If $g$ is a polynomial function, then the definite sum \eqref{def definite sum} is well defined for arbitrary $a,b\in A$, because the only periodic polynomial functions are the constant functions.
\end{oss}

Note that, by definition, definite and indefinite sums are linear operators, and the definite sum satisfies the following relations:
\begin{equation}\label{eq addition extremes definite sum}
    \sum\nolimits_a^c=\sum\nolimits_a^b+\sum\nolimits_b^c,
\end{equation}
\begin{equation}\label{eq inversion extremes definite sum}
    \sum\nolimits_a^b=-\sum\nolimits_b^a.
\end{equation}

\begin{theorem}[{\cite[(2.48)]{grahamConcreteMathematicsFoundation1994}}]\label{teo definite sum equal sum}
    If the extremes $a,b$ of a definite sum are such that $a-b\in\h\ZZ$, then the definite sum is equal to
    $$\sum\nolimits_{a}^{b}{g(x)\delta x}=\h\sum_{k=a}^{b-\h}g(k), $$
    where by the sum on the right we mean $g(a)+g(a+\h)+\dots+g(b-2\h)+g(b-\h)$.
\end{theorem}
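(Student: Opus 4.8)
The statement to prove is Theorem \ref{teo definite sum equal sum}, which says that when $a - b \in \h\ZZ$, the definite sum equals $\h \sum_{k=a}^{b-\h} g(k)$.

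The plan is to reduce this to a telescoping argument using the fundamental relation between the finite difference operator and the indefinite sum. First I would fix $f$ to be an antidifference of $g$, i.e. $f \in \sum g(x)\delta x$, so that by definition $\Delta_\h[f](x) = g(x)$, which unwinds to $f(x+\h) - f(x) = \h\, g(x)$ for every $x$. By definition \eqref{def definite sum}, the left-hand side is $f(b) - f(a)$, so the entire content of the theorem is that $f(b) - f(a) = \h \sum_{k=a}^{b-\h} g(k)$, and this should not depend on the choice of $f$ (two antidifferences differ by an $\h$-periodic function $C$, and since $a - b \in \h\ZZ$ we have $C(b) = C(a)$, so $f(b) - f(a)$ is well defined — this is exactly the remark preceding the theorem).

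The key step is the telescoping sum. Write $n = (b-a)/\h$, which is a nonnegative integer in the case $a \le b$ (I would handle $a > b$ separately, or note it follows from \eqref{eq inversion extremes definite sum} together with the $a<b$ case after swapping). Then
\begin{equation*}
    f(b) - f(a) = \sum_{j=0}^{n-1} \bigl( f(a + (j+1)\h) - f(a + j\h) \bigr) = \sum_{j=0}^{n-1} \h\, g(a + j\h),
\end{equation*}
using $f(x+\h) - f(x) = \h\, g(x)$ with $x = a + j\h$. Re-indexing by $k = a + j\h$, so $k$ runs over $a, a+\h, \dots, a + (n-1)\h = b - \h$ in steps of $\h$, gives $f(b) - f(a) = \h \sum_{k=a}^{b-\h} g(k)$, which is exactly the claimed formula. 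The case $a = b$ (i.e. $n = 0$) gives an empty sum on both sides, consistent with $f(b) - f(a) = 0$; the case $b < a$ follows by applying the previous case to $\sum_b^a$ and negating via \eqref{eq inversion extremes definite sum}, noting that $\h\sum_{k=a}^{b-\h} g(k)$ is interpreted as $-\h\sum_{k=b}^{a-\h} g(k)$ in that convention.

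I do not expect any serious obstacle here: the result is essentially the discrete fundamental theorem of calculus, and the only subtlety worth a sentence is well-definedness (independence of the choice of antidifference $f$), which the surrounding text has already flagged and which is handled by the $\h$-periodicity argument above. The proof is a one-line telescoping computation once the definitions are unpacked, so I would keep it short, perhaps even citing it directly as in \cite{grahamConcreteMathematicsFoundation1994} rather than belaboring it.
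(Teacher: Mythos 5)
Your proof is correct and is essentially the same as the paper's: the paper argues by induction on $n=(b-a)/\h$ using the additivity of the extremes of summation, which is exactly your telescoping sum written in inductive form, and both handle the reversed case $b<a$ via \eqref{eq inversion extremes definite sum}. Your explicit remark on well-definedness (independence of the choice of antidifference via $\h$-periodicity) is a sensible addition that the paper delegates to the preceding theorem and remark.
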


There are many more similarities between discrete and infinitesimal calculus, for example there is an analog of Leibniz rule and of integration by parts (see \cite{grahamConcreteMathematicsFoundation1994}). However, we have no general formula for the discrete change of variables, because there is no easy formula for the discrete chain rule. In our case, we will only need a basic change of variable, of the form $x\mapsto c\pm x$.

\begin{prop}\label{prop change of variable}
    The following formulae for the change of variables hold, for all $\alpha\in A$
    \begin{equation}\label{eq change of variable 1}
        \sum\nolimits_a^b(x+\alpha)_{n,\h}\,\delta x=\sum\nolimits^{b+\alpha}_{a+\alpha}(x)_{n,\h}\,\delta x; 
    \end{equation}
    \begin{equation}\label{eq change of variable 2}
        \sum\nolimits_a^b(-x+\alpha)_{n,\h}\,\delta x=-\sum\nolimits^{-b+\alpha+\h}_{-a+\alpha+\h}(x)_{n,\h}\,\delta x. 
    \end{equation}
\end{prop}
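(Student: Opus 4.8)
The plan is to prove both identities by exhibiting explicit antidifferences (elements of the indefinite sum) for the summands $(x+\alpha)_{n,\h}$ and $(-x+\alpha)_{n,\h}$ and then evaluating via \eqref{def definite sum}. Since both summands are polynomial functions, every definite sum appearing is well defined for arbitrary extremes (by the Remark following Theorem~\ref{teo definite sum equal sum}), so no constraint on $a,b$ is needed. The one preliminary observation I would isolate is how $\Delta_\h$ interacts with the substitutions $x\mapsto x+\alpha$ and $x\mapsto -x+\beta$: for any function $h$, if $g(x)=h(x+\alpha)$ then $\Delta_\h[g](x)=(\Delta_\h h)(x+\alpha)$, while if $g(x)=h(-x+\beta)$ then $\Delta_\h[g](x)=-(\Delta_\h h)(-x+\beta-\h)$. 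Both follow immediately from the definition of $\Delta_\h$, the second using that the backward difference $\bigl(h(z)-h(z-\h)\bigr)/\h$ equals $(\Delta_\h h)(z-\h)$.

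For \eqref{eq change of variable 1}, combine the first substitution rule with \eqref{eq difference pochhammer symbol}, which gives $\Delta_\h\bigl[(y)_{n+1,\h}/(n+1)\bigr]=(y)_{n,\h}$, to conclude that $\frac{(x+\alpha)_{n+1,\h}}{n+1}$ is an antidifference of $(x+\alpha)_{n,\h}$. Hence
\[
\sum\nolimits_a^b(x+\alpha)_{n,\h}\,\delta x=\frac{(b+\alpha)_{n+1,\h}-(a+\alpha)_{n+1,\h}}{n+1},
\]
and applying the antidifference $(x)_{n+1,\h}/(n+1)$ to the right-hand side of \eqref{eq change of variable 1} yields exactly the same expression; this proves the first formula.

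For \eqref{eq change of variable 2}, the second substitution rule together with $\Delta_\h\bigl[(y)_{n+1,\h}/(n+1)\bigr]=(y)_{n,\h}$ shows that $F(x):=-\frac{(-x+\alpha+\h)_{n+1,\h}}{n+1}$ satisfies $\Delta_\h[F](x)=(-x+\alpha)_{n,\h}$; the shift $\beta=\alpha+\h$ is chosen precisely to absorb the $-\h$ produced by the substitution. Therefore
\[
\sum\nolimits_a^b(-x+\alpha)_{n,\h}\,\delta x=F(b)-F(a)=-\,\frac{(-b+\alpha+\h)_{n+1,\h}-(-a+\alpha+\h)_{n+1,\h}}{n+1},
\]
which is visibly $-\bigl(\sum\nolimits^{-b+\alpha+\h}_{-a+\alpha+\h}(x)_{n,\h}\,\delta x\bigr)$, again using the antidifference $(x)_{n+1,\h}/(n+1)$.

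There is no real obstacle here; the only care needed is the bookkeeping of the shift in the second case (arriving at $\beta=\alpha+\h$ rather than $\beta=\alpha$) and the orientation-reversing sign. An alternative route, for a reader who prefers not to manipulate antidifferences, is to first check the identities when $a-b\in\h\ZZ$ by rewriting both sides as finite sums via Theorem~\ref{teo definite sum equal sum} and re-indexing ($k\mapsto k+\alpha$, resp.\ $k\mapsto -k+\alpha+\h$), and then deduce the general case from the fact that both sides are polynomial in $a$ and $b$ and agree whenever $a-b\in\h\ZZ$, which forces equality of polynomials. I would nonetheless present the antidifference argument, as it is self-contained and more transparent.
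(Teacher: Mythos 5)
Your proof is correct and follows essentially the same route as the paper: both arguments compute the finite difference of the shifted and reflected falling factorials, identify the resulting antidifferences, and evaluate the definite sums at the endpoints (the paper obtains $\Delta_\h[(-x+\alpha)_{n,\h}]=-n(-x+\alpha-\h)_{n-1,\h}$ by directly expanding the falling factorial, whereas you package the same computation as a general substitution rule for $\Delta_\h$, but this is a cosmetic difference). The bookkeeping of the $+\h$ shift and the sign in the second identity is handled correctly in both.
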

\begin{proof}
    Since $\Delta_\h$ commutes with multiplication by constants, we can use the same proof of \eqref{eq difference pochhammer symbol} to get 
    \begin{equation}
        \Delta_\h[(x+\alpha)_{n,\h}]=n(x+\alpha)_{n-1,\h},
    \end{equation}
    from which \eqref{eq change of variable 1} follows. For \eqref{eq change of variable 2}, notice that
    \begin{align*}
        \Delta_\h[(-x+\alpha)_{n,\h}]&=\frac{(-x+\alpha-\h)_{n,\h}-(-x+\alpha)_{n,\h}}{\h} \\
        &=\frac{(-x+\alpha-n\h)(-x+\alpha-\h)_{n-1,\h}-(-x+\alpha)(-x+\alpha-\h)_{n-1,\h}}{\h} \\
        &=-n(-x+\alpha-\h)_{n-1,\h}.
    \end{align*}
    The left-hand side of \eqref{eq change of variable 2} then becomes 
    $$-\frac{(-b+\alpha+\h)_{n+1,\h}-(-a+\alpha+\h)_{n+1,\h}}{n+1}, $$
    which is equal to the right-hand side of \eqref{eq change of variable 2} by \eqref{eq difference pochhammer symbol}.    
\end{proof}

It makes sense to consider multiple definite sums. As happens for multiple integrals, in some cases it is convenient to exchange the order of summation. We do not have a general rule for that, but we can do it in some special cases.

\begin{prop}\label{prop change order summation}
    Suppose $f(x,y)\in\CC[\h,x,y]$. Take a finite number of formal parameters $t,t_1,\dotso,t_n$. Take $a,a',b,b'$ in the $\ZZ$-span of $t_1,\dotso,t_n$, and $c,c',d,d'$ in the $\ZZ$-span of $t,t_1,\dotso,t_n$, such that, if we specialize $(t_1,\dotso,t_n)$ to any element in $(\h\ZZ)^n$, we have
    $$(a\leq x\leq b-\h)\wedge(c(x)\leq y\leq d(x)-\h)\iff (a'\leq y\leq b'-\h)\wedge(c'(y)\leq x\leq d'(y)-\h). $$
    Then
    \begin{equation}\label{eq change order summation}
        \sum\nolimits^{b}_{a}\delta x\sum\nolimits_{c(x)}^{d(x)}\delta y f(x,y)=\sum\nolimits^{b'}_{a'}\delta y\sum\nolimits_{c'(y)}^{d'(y)}\delta x f(x,y),
    \end{equation}
    where $c(x):=c(x,t_1,\dots,t_n)$ and so on.
\end{prop}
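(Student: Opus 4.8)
The plan is to reduce the claimed identity to the elementary fact that a finite sum of $f(x,y)$ over a two-dimensional set of lattice points may be iterated in either order; the only genuine work is to pass from definite sums to ordinary finite sums, which I would do by a Zariski-density argument. First I would observe that both sides of \eqref{eq change order summation} are elements of $\CC[\h,t_1,\dotso,t_n]$. Indeed, $\Delta_\h$ maps the ring of polynomials in one variable over $\CC[\h]$ onto itself, since by \eqref{eq difference pochhammer symbol} we have $\Delta_\h[(y)_{n+1,\h}/(n+1)]=(y)_{n,\h}$ and the falling factorials $(y)_{n,\h}$, being monic of degree $n$, form a $\CC[\h]$-basis. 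Hence $f$ has a polynomial antidifference $F(x,y)$ in $y$, so the inner sum equals $F(x,d(x))-F(x,c(x))$, which, because $c(x)$ and $d(x)$ are $\ZZ$-linear in $x,t_1,\dotso,t_n$, is a polynomial in $x,\h,t_1,\dotso,t_n$; summing this once more over $x$ in the same way yields a polynomial in $\h,t_1,\dotso,t_n$, and symmetrically for the right-hand side.

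Since two polynomials that agree on a Zariski-dense subset of $\CC^{n+1}$ coincide, it then suffices to prove \eqref{eq change order summation} after specializing $\h$ to an arbitrary positive rational number and $(t_1,\dotso,t_n)$ to an arbitrary point of $(\h\ZZ)^n$; this family is Zariski-dense, because $(\h\ZZ)^n$ is Zariski-dense in $\CC^n$ for each fixed $\h\neq 0$ and the positive rationals form an infinite set. For such a specialization all of $a,b,a',b'$ lie in $\h\ZZ$, and $c(x),d(x),c'(y),d'(y)$ lie in $\h\ZZ$ whenever $x,y\in\h\ZZ$, so Theorem \ref{teo definite sum equal sum} applies to every definite sum occurring in \eqref{eq change order summation}. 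Applying it to the inner and then to the outer summation, the left-hand side becomes $\h^2$ times the sum of $f(x,y)$ over the set of $(x,y)\in(\h\ZZ)^2$ with $a\leq x\leq b-\h$ and $c(x)\leq y\leq d(x)-\h$, and the right-hand side becomes $\h^2$ times the sum of $f(x,y)$ over the set of $(x,y)\in(\h\ZZ)^2$ with $a'\leq y\leq b'-\h$ and $c'(y)\leq x\leq d'(y)-\h$. By hypothesis these two index sets coincide, so the two finite sums are equal.

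The main obstacle is the bookkeeping hidden in the last step: Theorem \ref{teo definite sum equal sum} rewrites $\sum_a^b g\,\delta x$ as $\h\sum_{k=a}^{b-\h}g(k)$ with the expected sign only when $b\geq a$, and an iterated definite sum equals an honest double sum over a planar lattice region only when the inner range is non-empty for each value of the outer variable. One therefore has to run the reduction over the sub-family of specializations for which every range occurring on either side is non-empty and traversed in increasing order — this sub-family is still Zariski-dense in the cases in which the proposition is used, since there the relevant endpoint differences are controlled by finitely many $\ZZ$-linear inequalities cutting out a full-dimensional cone — and to check that nothing is lost in the remaining degenerate specializations. Granting this, the reduction above yields the identity; everything else is a routine manipulation of antidifferences.
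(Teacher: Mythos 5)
Your proof is correct and follows essentially the same route as the paper's: both sides are polynomials in $\CC[\h,t_1,\dotso,t_n]$, so it suffices to verify the identity after specializing the parameters into $\h\ZZ$, where Theorem \ref{teo definite sum equal sum} converts each iterated definite sum into an ordinary double sum over the same lattice region and the two orders of summation agree. The orientation and empty-range caveat you raise at the end is a genuine subtlety, but the paper's proof silently assumes it away ("which is just the reordering of ordinary sums"), so your added care in restricting to a Zariski-dense sub-family of properly oriented specializations only strengthens the argument.
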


\begin{proof}
    Both sides of the equation \eqref{eq change order summation} are polynomials in $\CC[\h,t_1,\dotso,t_n]$. Thus, if they coincide for every value of $(t_1,\dotso,t_n)\in(\h\ZZ)^n$, they are equal. 
    
    Let $(t_1,\dotso,t_n)\in(\h\ZZ)^n$, then we can use Theorem \ref{teo definite sum equal sum} to rewrite \eqref{eq change order summation} as 
    $$\sum^{b-\h}_{k=a}\ \sum_{l=c(k)}^{d(k)-\h} f(k,l)=\sum^{b'-\h}_{l=a'}\ \sum_{k=c'(l)}^{d'(l)-\h}f(k,l), $$
    which is just the reordering of ordinary sums.
\end{proof}

\end{appendices}

\printbibliography
\Addresses
\end{document}